 \let\temp\phi
\let\phi\varphi
\let\varphi\temp
\newenvironment{customtheorem}[1]
  {\innercustomtheorem}
  {\endinnercustomtheorem}
\newcommand{\acts}{\curvearrowright}
\DeclareMathOperator{\Aut}{Aut}
\newcommand{\cl}[1]{\overline{#1}}
\newcommand{\cW}{\mathcal{W}}
\newcommand{\C}{\mathbb{C}}
\newcommand{\T}{\mathbb{T}}
\newcommand{\Z}{\mathbb{Z}}
\newcommand{\ran}{\text{ran }}
\DeclareMathOperator{\Span}{span}
\newcommand{\calA}{\mathcal{A}}
\newcommand{\calB}{\mathcal{B}}
\newcommand{\calC}{\mathcal{C}}
\newcommand{\calD}{\mathcal{D}}
\newcommand{\calF}{\mathcal{F}}
\newcommand{\calH}{\mathcal{H}}
\newcommand{\calI}{\mathcal{I}}
\newcommand{\calJ}{\mathcal{J}}
\newcommand{\calK}{\mathcal{K}}
\newcommand{\calM}{\mathcal{M}}
\newcommand{\calO}{\mathcal{O}}
\newcommand{\calS}{\mathcal{S}}
\newcommand{\calT}{\mathcal{T}}
\newcommand{\calU}{\mathcal{U}}
\newcommand{\calW}{\mathcal{W}}
\newcommand{\Angle}[1]{\left\langle #1 \right\rangle}
\DeclareMathOperator{\id}{id}
\newcommand{\bC}{\mathbb{C}}
\newcommand{\bofh}{\mathcal{B}(\mathcal{H})}
\theoremstyle{plain}
\newtheorem{lemma}{Lemma}[section]
\newtheorem{theorem}[lemma]{Theorem}
\newtheorem{proposition}[lemma]{Proposition}
\newtheorem{corollary}[lemma]{Corollary}
\theoremstyle{definition}
\newtheorem{example}[lemma]{Example}
\newtheorem{remark}[lemma]{Remark}
\newtheorem{definition}[lemma]{Definition}
\newtheorem{problem}[lemma]{Problem}
\newcommand{\frakg}{\mathfrak{g}}
\newcommand{\frakh}{\mathfrak{h}}
\newcommand{\frakz}{\mathfrak{z}}
\newcommand{\env}{\text{env}}
\newcommand{\ess}{\text{ess}}
\newcommand{\stmin}{\text{C*-min}}
\title[Crossed products of operator systems]{Crossed products of operator systems}
\author{Samuel J. Harris}
\address{University of Waterloo \\
Department of Pure Mathematics \\
Waterloo, Ontario \\
Canada  N2L 3G1}
\email{sj2harri@uwaterloo.ca}
\author{Se-Jin Kim}
\address{University of Waterloo \\
Department of Pure Mathematics \\
Waterloo, Ontario \\
Canada  N2L 3G1}
\email{s362kim@uwaterloo.ca}
\begin{document}
\begin{abstract}
	In this paper we introduce the crossed product construction for a discrete group action on an operator system. In analogy to the work of E. Katsoulis and C. Ramsey, we describe three canonical crossed products arising from such a dynamical system. We describe how these crossed product constructions behave under $G$-equivariant maps, tensor products, and the canonical $C^*$-covers. We show that hyperrigidity is preserved under two of the three crossed products. Finally, using A. Kavruk's notion of an operator system that detects $C^*$-nuclearity, we give a negative answer to a question on operator algebra crossed products posed by Katsoulis and Ramsey.
	
\end{abstract}

\keywords{Operator systems; operator algebras; crossed products; nuclearity}
\maketitle

\tableofcontents

\section{Introduction}

Crossed products for $C^{\ast}$-algebras provide a recipe for constructing new $C^{\ast}$-algebras from dynamical information. This construction has had many fruitful applications in the theory of operator algebras as well as dynamical systems. The work of Kalantar and Kennedy \cite{kalantarkennedy} as well as the work of Kawabe \cite{kawabe} suggest that it is possible to study the dynamics of a dynamical system $(X,G)$ or a group $G$ by looking at the $G$-injective envelope of the associated $C^{\ast}$-algebra $I_G(C(X))$ introduced by Hamana \cite{hamanadynsys}. However, the $C^{\ast}$-algebra $I_G(C(X))$ is first and foremost constructed as a $G$-operator system--an operator system with an associated $G$-action given by unital complete order isomorphisms. This approach suggests that many of the results of $C^{\ast}$-dynamics may be phrased more naturally in the category of operator systems. One of the goals of this paper is to take a first step in this direction by introducing the notion of an operator system crossed product.

	In \cite{katsoulisramsey}, E. Katsoulis and C. Ramsey introduce a variant on crossed products of $C^{\ast}$-algebras for norm closed subalgebras of $B(\calH)$ which are not necessarily closed under involution. Such algebras are called \emph{non-selfadjoint operator algebras}. Crossed products of a dynamical system consisting of a (non-selfadjoint) approximately unital operator algebra $\calA$ along with a continuous $G$-action $\alpha$ acting by unital completely isometric isomorphisms are constructed by taking the norm closure of $C_c(G,\calA)$ within the crossed products of  various $G$-equivariant $C^{\ast}$-covers of $\calA$. Their work is partially motivated by the Hao-Ng isomorphism problem.
  \begin{problem}
  \emph{(Hao-Ng Isomorphism Problem)}
    Let $(X,\calC)$ be a non-degenerate $C^{\ast}$-correpondence, and let $\alpha: G \to \Aut(X,\calC)$ be the generalized gauge action of a locally compact group $G$. Do we have the isomorphism
    \begin{align*}
      \calO_X \rtimes_{\alpha} G \simeq \calO_{X \rtimes_\alpha G}\;?
    \end{align*}
  \end{problem}
  See \cite{BKQR} and \cite{KQR} for more information on this problem. Hao and Ng show that this isomorphism holds when $G$ is an amenable group \cite[Theorem 2.10]{hao-ng}. One can also ask whether the Hao-Ng isomorphism problem has a positive answer when using reduced crossed products instead of full crossed products. Katsoulis applies the crossed product construction of non-selfadjoint algebras to give an affirmative answer to the reduced version of the Hao-Ng isomorphism problem in the case of discrete groups \cite{katsoulis17}. One way to ensure that the Hao-Ng isomorphism problem has a positive answer for full crossed products is by ensuring that there is only one possibility for the full crossed product of an (approximately unital) operator algebra by a locally compact group.
  \begin{definition}[(Katsoulis-Ramsey, \cite{katsoulisramsey})]
    If $(\calA,G,\alpha)$ is an operator algebraic dynamical system, then define $\calA \rtimes_{C^*_{\env}(\calA),\alpha} G$ and $\calA \rtimes_{C^*_u(\calA),\alpha} G$ to be the norm closure of $C_c(G,\calA)$ in $C^*_{\env}(\calA) \rtimes G$ and $C^*_u(\calA) \rtimes G$, respectively.
  \end{definition}
  The monograph of Katsoulis and Ramsey ends with a list of open problems regarding operator algebra crossed products. Problem 1 asks whether the identity $C_{\env}^*(\calA \rtimes_{C_u^*(\calA),\alpha} G)=C_{\env}^*(\calA) \rtimes_{\alpha} G$ holds for all operator algebra dynamical systems.  If the identity above was true for all operator algebraic dynamical systems, then the Hao-Ng isomorphism theorem would have a positive answer for all $C^*$-correspondences and all locally compact groups \cite[Chapter 7]{katsoulisramsey}. In a similar direction, Problem 2 asks whether the images of $C_c(G,\calA)$ in $C^*_{\env}(\calA) \rtimes G$ and $C^*_u(\calA) \rtimes G$ are canonically completely isometrically isomorphic. This would imply that there is only one possible choice for a full crossed product for an operator algebra dynamical system.

In this paper, we construct several operator system crossed products for discrete group actions. These crossed products are generalizations of the operator algebraic crossed product introduced by Katsoulis and Ramsey in the case of discrete group actions (see Remarks \ref{remark: connection to operator algebra, reduced} and \ref{remark: connection to operator algebra, full}). The work of Farenick et. al \cite{DiscGroup} on operator systems associated to discrete groups suggests that, instead of considering operator systems of the form $C_c(G,\calS)$ inside a crossed product $C^{\ast}$-algebra, one can instead consider a generating set $\frakg$ (usually taken to be finite) of $G$ which contains the identity $e$ and is inverse-closed. In this way, one can generate a crossed product operator system $\calS \rtimes \frakg = C_c(\frakg, \calS)$. This approach is attractive for two reasons: firstly, in the case of finite dimensional operator systems, the dual of this matrix ordered space admits an operator system structure. Moreover, it is known that classification of finite dimensional operator systems has lower complexity than the classification of separable operator systems \cite{ACKKLS}. Since we only need to deal with the order structure on our vector space, and since our group is always assumed to be discrete, many of the results of Katsoulis and Ramsey for discrete group actions on unital operator algebras have a generalization to the operator system case with simpler proofs.

There is an important reason for our restriction to operator system dynamical systems where the group is discrete. If the group is not discrete, then the resulting crossed product may fail to have an order unit, and hence it may fail to be an operator system. This follows since the full group $C^*$-algebra of a non-discrete, locally compact group $G$ may not have a unit. One solution to this problem is to approach operator system dynamical systems involving general locally compact groups by allowing for a generalization of operator systems where no order unit is present. The authors plan to address this problem in future work.

We also apply the theory of operator system nuclearity detectors to operator algebra crossed products. A nuclearity detector $\calS$ is an operator system with the property that, for any unital $C^{\ast}$-algebra $\calC$, the identity $\calS \otimes_{\min} \calC = \calS \otimes_{\max} \calC$ holds if and only if $\calC$ is nuclear \cite{kavruk15}. From a nuclearity detector $\calS$, we construct the non-selfadjoint operator algebra $\calU(\calS)$ introduced by D. Blecher and C. Le Merdy \cite{blecherlemerdy}. Assuming that $\calS$ is a nuclearity detector, $\calU(\calS)$ has the property that, if $\calC$ is any (not necessarily unital) $C^{\ast}$-algebra and $\calU(\calS) \otimes_{\min} \calC = \calU(\calS) \otimes_{\max} \calC$ then $\calC$ must be nuclear. The smallest known nuclearity detector is the four-dimensional operator system $\calW_{3,2} \subseteq \bigoplus_{k=1}^3 M_2(\C)$, which will be considered in Sections \S4, \S5 and \S6. Using this nuclearity detector, we have the following theorem:
  \begin{customtheorem}{\ref{KRexamplelocallycompactgroup}}
  \label{Theorem: intro counterexample problem 2 KR}
    Suppose that $G$ is a locally compact group such that $C^*_\lambda(G)$ admits a tracial state. Let $\calA := \calU(\cW_{3,2})$ be the operator subalgebra of $M_2\left(\bigoplus_{k=1}^3 M_2(\C)\right)$ endowed with the trivial $G$-action $\id: G \acts \calA$. The following are equivalent:
    \begin{enumerate}
      \item $\calA \rtimes_{C^*_{env}(\calA),\id} G = \calA \rtimes_{C^*_u(\calA),\id} G$.
      \item The group $G$ is amenable.
    \end{enumerate}
  \end{customtheorem}
  Theorem \ref{Theorem: intro counterexample problem 2 KR} provides a counterexample to Problem 2 in \cite{katsoulisramsey} for a large class of locally compact groups. The operator algebra $\calA=\calU(\calW_{3,2})$ is surprisingly tame. Because $\calA$ is constructed from Kavruk's nuclearity detector, which is four-dimensional, the operator algebra we obtain satisfies $\dim(\calA) = 5$. Moreover, $\calA$ is hyperrigid in its $C^{\ast}$-envelope (see Theorem \ref{Theorem: hyperrigidity of counterexample}).

  The counterexample of Theorem \ref{Theorem: intro counterexample problem 2 KR} allows us to give a counterexample to Problem 1 of \cite{katsoulisramsey}.

  \begin{customtheorem}{\ref{KRproblem1}}
  \label{Theorem: intro counterexample problem 1 KR}
  Let $\calA=\calU(\calW_{3,2})$. If $G$ is a discrete group, then the following are equivalent:
  \begin{enumerate}
  \item
  We have the identity $C_{\env}^*(\calA) \rtimes_{\id} G=C_{\env}^*(\calA \rtimes_{\id} G)$.
  \item
  The group $G$ is amenable.
  \end{enumerate}
\end{customtheorem}

We note that the operator algebra $\calA$ does not provide a counterexample to the Hao-Ng isomorphism problem for full crossed products. Indeed, since our work was first posted, Katsoulis and Ramsey have posted a new paper \cite{katsoulisramsey18}, addressing the relation between Problems 1 and 2 of their monograph \cite{katsoulisramsey} and the Hao-Ng isomorphism problem. Since $\calA$ is not the tensor algebra of a $C^*$-correspondence, it cannot provide a counterexample to the isomorphism problem. On the other hand, for hyperrigid $C^*$-correspondences, Katsoulis and Ramsey show \cite[Theorem 4.9]{katsoulisramsey18} that Problems 1 and 2 are equivalent for the tensor algebra of the $C^*$-correspondence. Moreover, in this setting, these problems are equivalent to the Hao-Ng isomorphism problem. We refer the reader to \cite{katsoulisramsey18} for more information.

The paper is organized as follows. In Section \S2 we recall some basic facts about $C^*$-envelopes, operator algebra crossed products, and operator system theory that we will use throughout the paper. In Section \S3 we develop the theory of reduced crossed products for operator systems; we also show that the reduced crossed product is independent of the choice of $C^*$-cover chosen for the operator system.  Section \S4 concerns the more subtle theory of full crossed products of operator systems. We focus on two natural choices for $C^*$-covers for the full crossed product: the $C^*$-envelope of $\calS$ (resulting in the full enveloping crossed product) and the univeral $C^*$-algebra of $\calS$ (resulting in the full crossed product). We show that the reduced and full enveloping crossed products agree when the group $G$ is amenable. We turn to nuclearity detectors in Section \S5, allowing us to give counterexamples to the first two problems of the monograph of Katsoulis and Ramsey \cite{katsoulisramsey}. Finally, we show in Section \S6 that the operator algebra $\calU(\calW_{3,2})$ is hyperrigid in its $C^*$-envelope, which shows that the identity $C_{\env}^*(\calA \rtimes_{\alpha} G)=C_{\env}^*(\calA) \rtimes_{\alpha} G$ may fail even if $\calA$ is hyperrigid in its $C^*$-envelope.

\section{Preliminaries}

\subsection{$C^{\ast}$-envelopes and maximal dilations}

We assume that the reader is familiar with the basic definitions and theory of abstract operator systems.  See \cite[Chapter 13]{paulsenbook} for a thorough introduction to this topic.

Let $\calS$ be an operator system. A $C^{\ast}$\textit{-cover} for $\calS$ is a pair $(\calC, \rho)$, where $\calC$ is a $C^{\ast}$-algebra and $\rho: \calS \hookrightarrow \calC$ is a unital complete order isomorphism such that $C^*(\rho(\calS))=\calC$. If $(\calC,\rho)$ is a $C^*$-cover for $\calS$ and $\calI$ is an ideal in $\calC$, we say that $\calI$ is a \textit{boundary ideal} if the restriction of the canonical quotient map $q:\calC \to \calC/\calI$ to $\calS$ is a complete order embedding \cite{arveson69}. The \textit{Shilov ideal} $\calJ_{\calS}$ corresponding to the $C^*$-cover $(\calC,\rho)$ is the maximal boundary ideal; that is, whenever $\calI$ is a boundary ideal for $\calS$ in $(\calC,\rho)$, we have $\calI \subseteq \calJ_{\calS}$. M. Hamana showed that the Shilov ideal for $\calS$ in $(\calC,\rho)$ always exists \cite{hamanaopsys}. Moreover, in \cite{hamanaopsys}, it is shown that every operator system $\calS$ admits a unique $C^{\ast}$-cover $(C^*_{\env}(\calS),\iota)$, called the $C^{\ast}$\textit{-envelope} of $\calS$, satisfying the following universal property: whenever $(\calC,\rho)$ is a $C^{\ast}$-cover of $\calS$, there is a unique $*$-epimorphism $\pi: \calC \to C^*_{\env}(\calS)$ for which the diagram
\begin{center}
  \begin{tikzcd}
    \calC \arrow{rr}{\pi} & & C^*_{\env}(\calS) \\
    & \calS \arrow{ul}{\rho} \arrow{ur}{\iota} &
  \end{tikzcd}
\end{center}
commutes. In this setting, the Shilov ideal $\calJ_{\calS}$ is given precisely by the kernel of the map $\pi$ \cite{hamanaopsys}. In other words, $\calC/\calJ_{\calS} \simeq C_{env}^*(\calS)$. The proof in \cite{hamanaopsys} that a $C^{\ast}$-envelope always exists uses the injective envelope of an operator system. Although this construction is useful in many respects, the construction of the $C^{\ast}$-envelope on which we wish to concentrate in this paper is the construction given by maximal dilations. Given a unital, completely positive (ucp) map $\phi: \calS \to B(\calH)$, we say that a representation $\rho: \calS \to B(\calK)$ is a \textit{dilation} of $\phi$ if there is an isometry $V: \calH \hookrightarrow \calK$ for which $V\phi(x) = \rho(x)V$ for all $x \in \calS$. We will always assume without loss of generality that $\calH \subset \calK$. In this way, we may always set $\calK = \calH \oplus \calH^\perp$ and represent $\rho(x)$ as the block $2 \times 2$-matrix
\begin{align*}
  \rho(x) = \left[\begin{array}{cc} \phi(x) & a_x \\ c_x & b_x \end{array} \right]
\end{align*}
for some $a_x \in B(\calH^\perp,\calH)$, $b_x \in B(\calH^\perp)$ and $c_x \in B(\calH,\calH^{\perp})$. Note that $c_x=a_{x^*}$. The compression to the $(2,2)$-corner of $\rho(x)$ is also a ucp map $\phi': \calS \to B(\calH^\perp)$. We say that the dilation $\rho$ of $\phi$ is \textit{trivial} if $\rho = \phi \oplus \phi'$; that is, $a_x=0$ for all $x \in \calS$. A ucp map $\phi: \calS \to B(\calH)$ is \textit{maximal} if the only dilations of $\phi$ are trivial dilations. For an operator system $\calS$ and a ucp map $\phi: \calS \to B(\calH)$, we say that $\phi$ has the \textit{unique extension property} if there is a unique ucp extension of $\phi$ to $C_{\env}^*(\calS)$ and the unique extension is a $*$-homomorphism. Dritschel and McCollough (see \cite[Theorem 2.5]{arveson08} and \cite{dritschelmccullough}) show that a ucp map $\phi: \calS \to B(\calH)$ is maximal if and only if $\phi$ satisfies the unique extension property. In an unpublished work of Arveson, it is shown that every representation of an operator system has a maximal dilation \cite[Theorem 1.3]{arveson03}, and that if $\phi: \calS \to B(\calH)$ is maximal, then $\phi(\calS)$ necessarily generates the $C^{\ast}$-envelope of $\calS$ \cite[Corollary 3.3]{arveson03}. We will use all of these facts freely throughout this paper.

\subsection{Crossed products of operator algebras}

Let $\calA$ be an approximately unital (non-selfadjoint) operator algebra. We will always require that representations of $\calA$ be non-degenerate. An \textit{automorphism} on $\calA$ is a completely isometric isomorphism $\varphi:\calA \to \calA$. Note that if $\calA$ is unital, then any automorphism on $\calA$ is automatically unital. An \textit{operator algebra dynamical system} is a triple $(\calA,G, \alpha)$, where $\calA$ is an approximately unital operator algebra, $G$ is a locally compact group, and $\alpha: G \to \Aut(\calA)$ is a strongly continuous group homomorphism into the group $\Aut(\calA)$ of automorphisms on $\calA$. In \cite{katsoulisramsey}, crossed products for operator algebras are introduced. Katsoulis and Ramsey define these as operator subalgebras of a $C^{\ast}$-algebraic crossed product. To do this, they first define an $\alpha$\textit{-admissible} $C^{\ast}$\textit{-cover} of a dynamical system $(\calA,G,\alpha)$ to be any $C^{\ast}$-dynamical system $(\calC,G,\tilde{\alpha})$ with a completely isometric homomorphism $\rho: \calA \hookrightarrow \calC$ such that, for all $s \in G$, the diagram
\begin{center}
	\begin{tikzcd}
		 \calC \arrow{r}{\widetilde{\alpha}_s} & \calC \\
		 \calA \arrow{r}{\alpha_s} \arrow{u}{\rho} & \calA \arrow{u}{\rho}
	\end{tikzcd}
\end{center}
commutes. It is shown in \cite{katsoulisramsey} that, given a dynamical system $(\calA,G,\alpha)$, both the $C^{\ast}$-envelope and universal $C^{\ast}$-algebra of $\calA$ admit an $\alpha$-admissible $C^{\ast}$-cover, with action denoted by the symbol $\alpha$.
\begin{definition}
	Let $(\calA, G,\alpha)$ be an operator algebraic dynamical system, and let $(\calC,G,\alpha)$ be an $\alpha$-admissible $C^{\ast}$-cover with embedding $\rho: \calA \hookrightarrow \calC$.
	\begin{enumerate}
		\item The \textit{reduced crossed product} $\calA \rtimes^\lambda_\alpha G$ is the norm closure of $C_c(G,\rho(\calA))$ in $\calC \rtimes_{\alpha,\lambda} G$.
		\item The \textit{full crossed product relative to} $\calC$, denoted by $\calA \rtimes_{\calC,\alpha} G$, is the norm closure of $C_c(G,\rho(\calA))$ in $\calC \rtimes_{\alpha} G$.
		\item The \textit{full crossed product} $\calA \rtimes_{\alpha} G$ is the full crossed product relative to $C^*_{\max}(\calA)$ (see \cite{blecherlemerdy} for the definition of $C^*_{\max}(\calA)$).
	\end{enumerate}
\end{definition}

The reduced crossed product is independent of the choice of admissible $C^{\ast}$-cover: if $(\calC, G, \alpha)$ is any admissible $C^{\ast}$-cover of $(\calA, G, \alpha)$ with embedding $\rho: \calA \hookrightarrow \calC$, then the map
\begin{align*}
	\phi: C_c(G,\calA) \subseteq C^*_{\env}(\calA) \rtimes_\lambda G \to C_c(G,\rho(A)) \subseteq \calC \rtimes_\lambda G : f \mapsto \rho \circ f
\end{align*}
extends to a completely isometric isomorphism of operator algebraic crossed products \cite{katsoulisramsey}. Theorem \ref{KRexamplelocallycompactgroup} will show that the analogous result for full relative crossed products need not hold in general. As in the case of $C^{\ast}$-algebras, the full crossed product $\calA \rtimes_\alpha G$ of an operator algebra satisfies the following universal property: if $(\pi, u) : (\calA, G) \to B(\calH)$ is a covariant pair, in the sense that $\pi$ is a completely contractive homomorphism on $\calA$ and $u$ is a homomorphism on $G$ with $\pi(\alpha_s(a)) = u_s \pi(a) u_s^*$ for all $s \in G$ and $a \in \calA$, then there is a canonical completely contractive homomorphism
\begin{align*}
	\pi \rtimes u : \calA \rtimes_\alpha G \to B(\calH)
\end{align*}
extending $\pi$ and $u$. This map is called the \textit{integrated form} of $(\pi,u)$.

When the group action is trivial, the resulting crossed product structures become operator algebra tensor products.  We briefly recall the definition of these tensor products.  We refer the reader to \cite{blecherlemerdy} for more information on operator algebra tensor products.

\begin{definition}
Let $\calA \subseteq \bofh$ and $\calB \subseteq \calB(\calK)$ be approximately unital operator algebras. The \textit{minimal tensor product} of $\calA$ and $\calB$, denoted by $\calA \otimes_{\min} \calB$, is the completion of $\calA \otimes \calB$ with respect to the norm inherited from $\calB(\calH \otimes \calK)$.
\end{definition}

Note that matrix norms for $\calA \otimes_{\min} \calB$ are also inherited from $\calB(\calH \otimes \calK)$. The definition of the minimal tensor product does not depend on the choice of embeddings.

\begin{definition}
For approximately unital operator algebras $\calA$ and $\calB$, the \textit{maximal tensor product} of $\calA$ and $\calB$, denoted by $\calA \otimes_{\max} \calB$, is the completion of $\calA \otimes \calB$ with respect to the norm on $\calA \otimes \calB$ given by
$$\|x\|_{\max}=\sup \{ \|\pi \cdot \rho(x)\|_{\calB(\calH)} \},$$
where the supremum is taken over all Hilbert spaces $\calH$ and all completely contractive representations $\pi:\calA \to \bofh$ and $\rho:\calB \to \bofh$ with commuting ranges.
\end{definition}

We note that matrix norms for $\calA \otimes_{\max} \calB$ are defined similarly. In the supremum, the maps $\pi$ and $\rho$ can be assumed to be non-degenerate \cite[6.1.11]{blecherlemerdy}. Moreover, if $\calB$ is a $C^*$-algebra, then $\calA \otimes_{\max} \calB$ is completely isometrically contained in the $C^*$-algebraic tensor product $C_{\max}^*(\calA) \otimes_{\max} \calB$ \cite[6.1.9]{blecherlemerdy}.

\begin{example}
\label{example: opalg trivial actions}
	Let $\calA$ be an approximately unital operator algebra and let $G$ be a locally compact group. Let $\id: G \to \Aut(\calA)$ be the trivial action on $\calA$; that is, $\id_s = \id_\calA$ for all $s \in G$. We have natural isomorphisms
	\begin{align*}
		\calA \rtimes^\lambda_{\id} G &\simeq \calA \otimes_{\min} C^*_\lambda(G) : a\lambda_s \mapsto a \otimes \lambda_s, \text{ and } \\
		\calA \rtimes_{\id} G &\simeq \calA \otimes_{\max} C^*(G) : a u_s \mapsto a \otimes u_s\;.
	\end{align*}
	In the reduced case, we have the natural isomorphism $$C^*_{\env}(\calA) \rtimes_{\id, \lambda} G = C^*_{\env}(\calA) \otimes_{\min} C^*_\lambda(G),$$
	since the left regular representation is exactly the representation of the minimal tensor product. In the full case, we have the natural isomorphism
	$$C^*_{\max}(\calA) \rtimes_{\id} G = C^*_{\max}(\calA) \otimes_{\max} C^*(G),$$
	since the universal property for both algebras are identical. Restricting these isomorphisms to $C_c(G,\calA)$ yields the result. For more details on these isomorphisms, see \cite[Lemma 2.73 and Corollary 7.17]{danawilliams}.

\end{example}

\subsection{Operator system tensor products}

We briefly recall some facts about operator system tensor products here; more information can be found in \cite{KPTT}.

An \textit{operator system tensor product} $\tau$ is a map that sends a pair of operator systems $(\calS,\calT)$ to an operator system $\calS \otimes_{\tau} \calT$, such that
\begin{enumerate}
\item
If $X=(X_{ij}) \in M_n(\calS)_+$ and $Y=(Y_{k\ell}) \in M_m(\calT)_+$, then $X \otimes Y:=(X_{ij} \otimes Y_{k\ell}) \in M_{nm}(\calS \otimes_{\tau} \calT)_+$; and
\item
If $\varphi:\calS \to M_n$ and $\psi:\calT \to M_m$ are unital completely positive (ucp) maps, then $\varphi \otimes \psi:\calS \otimes_{\tau} \calT \to M_{nm}$ is ucp.
\end{enumerate}

An operator system tensor product $\tau$ is said to be \textit{symmetric} if, for every pair of operator systems $(\calS,\calT)$, the flip map $\calS \otimes \calT \to \calT \otimes \calS$ induces a complete order isomorphism $\calS \otimes_{\tau} \calT \to \calT \otimes_{\tau} \calS$.

We will be working with four main operator system tensor products:

\begin{definition}
The \textit{minimal tensor product} of $\calS$ and $\calT$, denoted by $\calS \otimes_{\min} \calT$, is defined such that $X \in M_n(\calS \otimes_{\min} \calT)$ is positive if and only if $(\varphi \otimes \psi)^{(n)}(X) \in M_{kmn}^+$ for every pair of ucp maps $\varphi:\calS \to M_k$ and $\psi:\calT \to M_m$.
\end{definition}

A fact that will be used throughout the paper is that the minimal tensor product is injective; i.e., whenever $\calS_1,\calS_2,\calT_1,\calT_2$ are operator systems with unital complete order embeddings $\iota:\calS_1 \subseteq \calS_2$ and $\kappa:\calT_1 \subseteq \calT_2$, the tensor product map $\iota \otimes \kappa:\calS_1 \otimes_{\min} \calT_1 \to \calS_2 \otimes_{\min} \calT_2$ is a complete order embedding \cite[Theorem 4.6]{KPTT}. In particular, if $\calS$ is an operator subsystem of $B(\calH)$ and $\calT$ is an operator subsystem of $B(\calK)$, then $\calS \otimes_{\min} \calT$ is completely order isomorphic to the image of $\calS \otimes \calT$ in $B(\calH \otimes \calK)$ \cite[Theorem 4.4]{KPTT}. 

For two linear maps $\varphi:\calS \to \bofh$ and $\psi:\calT \to \bofh$ with commuting ranges (i.e. $\varphi(s)\psi(t)=\psi(t)\varphi(s)$ for all $s \in \calS$ and $t \in \calT$), we let $\varphi \cdot \psi:\calS \otimes \calT \to \bofh$ be defined on the vector space tensor product by $\varphi \cdot \psi(s \otimes t)=\varphi(s)\psi(t)$.

\begin{definition}
The \textit{commuting tensor product} of $\calS$ and $\calT$, denoted by $\calS \otimes_c \calT$, is defined such that $X \in M_n(\calS \otimes_c \calT)$ is positive if and only if $(\varphi \cdot \psi)^{(n)}(X) \in M_n(\bofh)_+$ for every pair of ucp maps $\varphi:\calS \to \bofh$ and $\psi:\calT \to \bofh$ with commuting ranges.
\end{definition}

We note that $\calS \otimes_c \calT$ is completely order isomorphic to the inclusion $\calS \otimes \calT \subseteq C_u^*(\calS) \otimes_{\max} C_u^*(\calT)$ \cite[Theorem 6.4]{KPTT}.

\begin{definition}
The \textit{maximal tensor product} of $\calS$ and $\calT$, denoted by $\calS \otimes_{\max} \calT$, is defined such that $X \in M_n(\calS \otimes_{\max} \calT)$ is positive if and only if, for every $\varepsilon>0$, there are $S_{\varepsilon} \in M_{k(\varepsilon)}(\calS)_+$, $T_{\varepsilon} \in M_{m(\varepsilon)}(\calT)_+$ and a linear map $A_{\varepsilon}:\C^n \to \C^{k(\varepsilon)} \otimes \C^{m(\varepsilon)}$ such that
$$X+\varepsilon 1=A_{\varepsilon}^* (S_{\varepsilon} \otimes T_{\varepsilon}) A_{\varepsilon}.$$
\end{definition}

\begin{definition}
The \textit{essential tensor product} of $\calS$ and $\calT$, denoted by $\calS \otimes_{\ess} \calT$, is the operator system structure on $\calS \otimes \calT$ inherited from the inclusion $\calS \otimes \calT \subseteq C_{\env}^*(\calS) \otimes_{\max} C_{\env}^*(\calT)$.
\end{definition}

For any two operator system tensor products $\alpha$ and $\beta$, we write $\alpha \leq \beta$ if, for all operator systems $\calS$ and $\calT$, the identity map $\id:\calS \otimes_{\beta} \calT \to \calS \otimes_{\alpha} \calT$ is ucp. An operator system $\calS$ is said to be $(\alpha,\beta)$-\textit{nuclear} if for every operator system $\calT$, the identity map $\id:\calS \otimes_{\alpha} \calT \to \calS \otimes_{\beta} \calT$ is a complete order isomorphism. For example, every unital $C^*$-algebra is $(c,\max)$-nuclear \cite[Theorem 6.7]{KPTT}.

\subsection{Finite-dimensional operator system quotients and duals}

In general, the dual space of an operator system can always be made into a matrix-ordered $*$-vector space \cite[Lemma 4.2, Lemma 4.3]{choieffrosopsys} as follows. If $\calS$ is an operator system with Banach space dual $\calS^d$, and $f=(f_{ij}) \in M_n(\calS^d)$, then we define $f^*=(f_{ji}^*)$, where $f_{ij}^*(s):=\overline{f_{ij}(s^*)}$ for all $1 \leq i,j \leq n$ and $s \in \calS$.  We say that a self-adjoint element $f=(f_{ij}) \in M_n(\calS^d)$ is positive if the associated map $F:\calS \to M_n$ given by $F(s)=(f_{ij}(s))$ is completely positive.  With this structure, $\calS^d$ becomes a matrix-ordered $*$-vector space.  If $\calS$ is not finite-dimensional, then $\calS^d$ may not have an order unit, and hence may not be an operator system.  However, if $\calS$ is finite-dimensional, then $\calS^d$ is an operator system, and any faithful state on $\calS^d$ will be an order unit for $\calS^d$ \cite{choieffrosopsys}.

The theory of operator system quotients is rather new and not well understood. If $\varphi:\calS \to \calT$ is a surjective ucp map between operator systems, then we may endow the quotient vector space $\calS/\ker(\varphi)$ with an operator system structure \cite{KPTTquotients}. For $s \in \calS$, we write $\dot{s}$ to denote its image in $\calS/\ker(\varphi)$. We say that $\dot{X}=(\dot{X}_{ij}) \in M_n(\calS/\ker(\varphi))$ is positive if, for every $\varepsilon>0$, there is $Y_{\varepsilon} \in M_n(\calS)_+$ such that $\dot{Y}_{\varepsilon}=\dot{X}+\varepsilon \dot{I}_n$, where $I_n$ denotes the $n \times n$ identity matrix in $M_n(\calS)$. Note that whenever $\varphi:\calS \to \calT$ is a surjective ucp map, the induced map $\dot{\varphi}:\calS/\ker(\varphi) \to \calT$ is ucp \cite[Proposition 3.6]{KPTTquotients}. This leads to the following definition.

\begin{definition}
A surjective ucp map $\varphi:\calS \to \calT$ between operator systems is said to be a \textit{complete quotient map} if the induced map $\dot{\varphi}:\calS/\ker(\varphi) \to \calT$ is a complete order isomorphism.
\end{definition}

For our purposes, it will be helpful to translate between complete quotient maps and complete order embeddings, via the Banach space adjoint.  Recall that whenever $\varphi:\calS \to \calT$ is a ucp map between finite-dimensional operator systems, we may define a ucp map $\varphi^d:\calT^d \to \calS^d$ by $[\varphi^d(f)](s)=f(\varphi(s))$. Then a ucp map $\varphi:\calS \to \calT$ between finite-dimensional operator systems is a complete quotient map if and only if $\varphi^d:\calT^d \to \calS^d$ is a complete order embedding \cite[Proposition 1.8]{FP}.

\subsection{Notational conventions}
	In this paper all non-selfadjoint operator algebras will be assumed to be unital, and will be denoted by the script letters $\calA$ and $\calB$. Our groups will be denoted by the letter $G$ and are assumed to be discrete unless otherwise stated. Unless otherwise specified, all $C^{\ast}$-algebras will be assumed to be unital and will be denoted by the letters $\calC$ and $\calD$. Operator systems will generally be denoted by the letters $\calS$ and $\calT$. The order unit of an operator system $\calS$ will be denoted by the unit $1$ or $1_\calS$ if it needs to be specified, with the exception of the order unit of the matrix algebra $M_n(\C)$, which will be denoted by $I_n$.

\section{Reduced Crossed Products}

Let $G$ be a discrete group. We will assume that we are working with a set of generators $\frakg$ for $G$ such that $\frakg^{-1} = \frakg$ and $e \in \frakg$.

\begin{definition}
  If $\calS$ is an operator system, then $\text{Aut}(\calS)$ is the group of unital complete order isomorphisms $\varphi:\calS \to \calS$. An \textit{(operator system) dynamical system} is a $4$-tuple $(\calS,G,\frakg,\alpha)$, where $\calS$ is an operator system, $G$ is a group with generating set $\frakg$, and $\alpha: G \to \Aut(\calS)$ is a group homomorphism.

  Let $(\calS,G,\frakg,\alpha)$ be a dynamical system, and let $\rho:\calS \to \calC$ be a complete order embedding for which $C^*(\rho(\calS))=\calC$.  The $C^*$-cover $\calC$ is said to be $\alpha$-\textit{admissible} if there is a group action $\overline{\alpha}:G \to \Aut(\calC)$ for which the diagram
  \begin{center}
    \begin{tikzcd}
      \calC \arrow{r}{\cl{\alpha}_g} & \calC \\
      \calS \arrow{u}{\rho} \arrow{r}{\alpha_g} & \calS \arrow{u}{\rho}
    \end{tikzcd}
  \end{center}
  commutes for all $g \in G$. We denote such an $\alpha$-admissible $C^*$-cover by the triple $(\calC,\rho,\overline{\alpha})$.

  Given an $\alpha$-admissible $C^{\ast}$-cover $(\calC,\rho,\cl{\alpha})$ of a dynamical system $(\calS, G,\frakg,\alpha)$, the \textit{reduced crossed product relative to} $\frakg$ is defined as the operator subsystem of the reduced crossed product $C^*$-algebra $\calC \rtimes_{\overline{\alpha},\lambda} G$ given by
  \begin{align*}
    \calS \rtimes^{(\calC,\rho)}_{\alpha, \lambda} \frakg := \Span\{a \lambda_g : a \in \calS, g \in \frakg \} \subset \calC \rtimes_{\cl{\alpha}, \lambda} G\;.
  \end{align*}

Finally, given two operator system dynamical systems $(\calS,G,\frakg,\alpha)$ and $(\calT,G,\frakh,\beta)$, we say that a ucp map $\phi:\calS \to \calT$ is $G$\textit{-equivariant} if for every $g \in G$ and $s \in \calS$, we have $$\beta_g(\phi(s))=\phi(\alpha_g(s)).$$
\end{definition}

\begin{remark}\label{remark: dependence on generators, reduced}
  If $G$ is a discrete group with generating sets $\frakg$ and $\frakh$ and $\frakg \subset \frakh$ then for any $G$-action $\alpha: G \to \Aut(\calS)$ and any admissible $C^{\ast}$-cover $(\calC,\rho,\alpha)$ we have the complete order embedding
  \begin{align*}
    \calS \rtimes^{(\calC,\rho)}_{\alpha,\lambda} \frakg \subset \calS \rtimes^{(\calC,\rho)}_{\alpha,\lambda} \frakh,\;.
  \end{align*}
  given by the canonical inclusion. Thus, although different choices of generating sets will yield different crossed products in general, the choice of generators is not essential in the structure of the crossed product.
\end{remark}

\begin{remark}\label{remark: connection to operator algebra, reduced}
  Recall that if $\calA$ is a unital operator algebra with $C^*$-cover $(\calC,\rho)$ and $\varphi:\rho(\calA) \to B(\calH)$ is a linear map, then $\varphi$ is unital and completely contractive if and only if the map $\widetilde{\varphi}:\rho(\calA)+\rho(\calA)^* \to B(\calH)$ given by $$\widetilde{\varphi}(\rho(a)+\rho(b)^*)=\varphi(\rho(a))+\varphi(\rho(b))^*$$ 
  is ucp \cite{arveson69}. In particular, if $\alpha \in \text{Aut}(\rho(\calA))$, then it readily follows that $\widetilde{\alpha} \in \text{Aut}(\rho(\calA)+\rho(\calA)^*)$. Suppose that $G$ is a discrete group. For a group action $\alpha:G \to \text{Aut}(\calA)$ and an $\alpha$-admissible $C^*$-cover $(\calC,\rho,\alpha)$, there is an associated group action $\widetilde{\alpha}:G \to \text{Aut}(\rho(\calA)+\rho(\calA)^*)$ given by the assignment $g \mapsto \widetilde{\alpha}_g$. In fact, any $\alpha$-admissible $C^{\ast}$-cover $(\calC,\rho,\alpha)$ for $(\calA,G,\alpha)$ is also $\widetilde{\alpha}$-admissible for  $(\rho(\calA)+\rho(\calA)^*,G,\widetilde{\alpha},\frakg)$. Let $\widetilde{\rho}:\rho(\calA)+\rho(\calA)^* \to \calC$ be the canonical inclusion. Then for reduced crossed products, setting $\frakg=G$, we have the identity
  \begin{align*}
    (\rho(\calA) + \rho(\calA)^*) \rtimes^{(\calC,\widetilde{\rho})}_{\alpha,\lambda} G = (\calA \rtimes^{(\calC,\rho)}_{\alpha,\lambda} G) + (\calA \rtimes^{(\calC,\rho)}_{\alpha,\lambda} G)^* \subseteq \calC \rtimes_{\alpha,\lambda} G\;.
  \end{align*}
  This means that there is a bijective correspondence between unital completely positive maps on the reduced crossed product $(\rho(\calA) + \rho(\calA)^*) \rtimes^{(\calC,\widetilde{\rho})}_{\alpha,\lambda} G$ and unital completely contractive maps on $\calA \rtimes^{(\calC,\rho)}_{\alpha,\lambda} G$. In this way, any reduced crossed product of a unital operator algebra by a discrete group is contained completely isometrically in an associated operator system reduced crossed product.
\end{remark}

We would like an abstract notion of the reduced crossed product. Indeed, we shall show that the reduced crossed product is independent of its admissible $C^{\ast}$-cover. Until that fact is established, we will always make reference to the $C^*$-cover in question when discussing (relative) reduced crossed products.

\begin{example}
  Let $G$ be a group.  Consider the trivial action of $G$ on $\C$; i.e., $\alpha_g(1)=1$ for all $g \in G$. In this case, we simply recover the reduced group operator system corresponding to the generating set $\frakg$.  That is to say,
  \begin{align*}
    \C \rtimes^{C^*_\lambda(G)}_{\id,\lambda} \frakg = \calS_\lambda(\frakg),\;.
  \end{align*}
  where $\calS_{\lambda}(\frakg)=\text{span}\{ \lambda_g: g \in \frakg\} \subseteq C_{\lambda}^*(G)$ \cite{DiscGroup}.
\end{example}

\begin{proposition}
\label{proposition: reducedtrivialaction}
  Suppose that $(\calS, G,\frakg, id)$ is the trivial dynamical system; i.e., $\alpha_g = id_\calS$ for all $g \in G$. Then the map
  \begin{align*}
  \Psi:\calS \rtimes_{\id,\lambda} \frakg &\to \calS \otimes_{\min} \calS_{\lambda}(\frakg) \\
  s\alpha_g &\mapsto s \otimes \lambda_g
  \end{align*}
  is a complete order isomorphism.
\end{proposition}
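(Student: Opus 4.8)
The plan is to reduce the claim to the concrete realization of the minimal operator system tensor product recorded in \cite[Theorem 4.4]{KPTT}. Fix any admissible $C^*$-cover $(\calC, \rho, \cl{\alpha})$ of the trivial dynamical system; since $\alpha = \id$ and $\cl{\alpha}$ restricts to $\alpha$ on the generating set $\rho(\calS)$ of $\calC$, the induced action $\cl{\alpha}$ on $\calC$ is itself trivial. Realize $\calC \subseteq B(\calH)$ faithfully, with $C^*_\lambda(G) \subseteq B(\ell^2(G))$ acting in the usual way.

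The first step is to identify the ambient reduced crossed product $C^*$-algebra. Because the action is trivial, the regular representation realizes $\calC \rtimes_{\cl{\alpha}, \lambda} G$ on $\calH \otimes \ell^2(G)$ as the $C^*$-algebra generated by $\{a \otimes 1 : a \in \calC\}$ together with $\{1 \otimes \lambda_g : g \in G\}$, with the generator $a\lambda_g$ acting as the elementary tensor $a \otimes \lambda_g$. This is exactly the spatial tensor product, so $\calC \rtimes_{\cl{\alpha},\lambda} G$ is completely order isomorphic to $\calC \otimes_{\min} C^*_\lambda(G)$ via $a\lambda_g \mapsto a \otimes \lambda_g$; this is the $C^*$-level isomorphism recalled in Example \ref{example: opalg trivial actions} (see also \cite[Lemma 2.73]{danawilliams}). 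Under this identification the reduced crossed product operator system $\calS \rtimes^{(\calC,\rho)}_{\id,\lambda} \frakg = \Span\{a\lambda_g : a \in \calS, g \in \frakg\}$ is carried onto $\Span\{a \otimes \lambda_g : a \in \calS, g \in \frakg\}$, which is precisely the image of the algebraic tensor product $\calS \otimes \calS_\lambda(\frakg)$ inside $B(\calH \otimes \ell^2(G))$.

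The second step is to recognize this image as the minimal tensor product. Since $\calS \subseteq \calC \subseteq B(\calH)$ and $\calS_\lambda(\frakg) \subseteq C^*_\lambda(G) \subseteq B(\ell^2(G))$ are operator subsystems, \cite[Theorem 4.4]{KPTT} shows that the image of $\calS \otimes \calS_\lambda(\frakg)$ in $B(\calH \otimes \ell^2(G))$, with the inherited matrix order, is completely order isomorphic to $\calS \otimes_{\min} \calS_\lambda(\frakg)$, again via the identity on elementary tensors. Composing this isomorphism with the identification of the previous paragraph produces exactly the map $\Psi$ sending $s\alpha_g \mapsto s \otimes \lambda_g$, whence $\Psi$ is a complete order isomorphism.

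The main obstacle is really only the first step: one must verify that the reduced $C^*$-crossed product of a trivial action is \emph{spatially} the minimal tensor product with $C^*_\lambda(G)$ and that the generators $a\lambda_g$ correspond to simple tensors $a \otimes \lambda_g$. This is standard, but it relies on the fact that the operator system minimal tensor product of two $C^*$-algebras coincides with their $C^*$-spatial tensor product, which holds precisely because both are concretely realized in $B(\calH \otimes \ell^2(G))$. Once this is in place, injectivity of $\otimes_{\min}$ \cite[Theorem 4.6]{KPTT} handles the remaining work and, in particular, makes the argument manifestly independent of the choice of admissible cover $\calC$: the resulting operator system is always $\calS \otimes_{\min} \calS_\lambda(\frakg)$.
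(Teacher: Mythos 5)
Your proposal is correct and follows essentially the same route as the paper: identify $\calC \rtimes_{\id,\lambda} G$ with $\calC \otimes_{\min} C^*_\lambda(G)$ via \cite[Lemma 2.73]{danawilliams} and then recognize the span of the generators $a \otimes \lambda_g$ as $\calS \otimes_{\min} \calS_\lambda(\frakg)$ using the spatial description of the minimal operator system tensor product (the paper cites \cite[Corollary 4.10]{KPTT} where you cite Theorem 4.4, but the content is the same). The only cosmetic difference is that you argue for an arbitrary admissible cover rather than working directly with $C^*_{\env}(\calS)$ as in the paper's definition of $\calS \rtimes_{\id,\lambda}\frakg$; this is harmless and your observation that the induced action $\cl{\alpha}$ is automatically trivial is correct.
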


\begin{proof}
  From the theory of $C^{\ast}$-algebras, there is an isomorphism $\Phi: C^*_{\env}(\calS) \rtimes_{id,\lambda} G \to C^*_{\env}(\calS) \otimes_{\min} C^*_{\lambda}(G)$ which sends generators to generators \cite[Lemma 2.73]{danawilliams}. This restricts to an isomorphism $\Psi: \calS \rtimes_{id,\lambda} \frakg \to \Span\{a \otimes \lambda_g: a \in \calS, g \in \frakg\}$ which sends generators to generators. By \cite[Corollary 4.10]{KPTT}, the latter operator system is precisely $\calS \otimes_{\min} \calS_{\lambda}(\frakg)$.
\end{proof}

Let $\calC$ be a $C^*$-algebra and $\calS$ be an operator system contained in $\calC$.  We say that $\calS$ \textit{contains enough unitaries} in $\calC$ if the set of elements in $\calS$ which are unitary in $\calC$ generate $\calC$ as a $C^*$-algebra. This property of operator systems was first considered in \cite{KPTTquotients}. A result of Kavruk \cite[Proposition 5.6]{kavruk14} states that if $\calS \subset \calC$ is an operator subsystem of a $C^{\ast}$-cover $\calC$ for which $\calS$ contains enough unitaries, then $\calC$ is the $C^{\ast}$-envelope of $\calS$. In particular, $C^*_{\env}(\C \rtimes^{C^*_\lambda(G)}_{\id,\lambda} \frakg) = C^*_\lambda(G)$.

Before working more with $C^*$-envelopes corresponding to dynamical systems, we first show that for any dynamical system, the group action can be extended to a group action on the $C^*$-envelope.

\begin{proposition}
  Suppose that $(\calS,G,\frakg, \alpha)$ is a dynamical system. Suppose that $(C^*_{\env}(\calS),\iota)$ is the $C^{\ast}$-envelope of $\calS$, where $\iota: \calS \hookrightarrow C^*_{\env}(\calS)$ is the canonical complete order embedding. Then there exists a $G$-action $\overline{\alpha}$ on $C^*_{\env}(\calS)$ which makes $(C^*_{\env}(\calS), \iota, \overline{\alpha})$ an $\alpha$-admissible $C^{\ast}$-cover of $\calS$.
\end{proposition}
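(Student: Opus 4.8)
The plan is to extend each automorphism $\alpha_g$ of $\calS$ to a $*$-automorphism $\overline{\alpha}_g$ of $C^*_{\env}(\calS)$ directly from the universal property of the $C^*$-envelope, and then to verify that $g \mapsto \overline{\alpha}_g$ is a group homomorphism using the uniqueness clause of that universal property together with the fact that $\iota(\calS)$ generates $C^*_{\env}(\calS)$ as a $C^*$-algebra.

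First I would observe that, for each $g \in G$, the pair $(C^*_{\env}(\calS), \iota \circ \alpha_{g^{-1}})$ is again a $C^*$-cover of $\calS$. Indeed, $\alpha_{g^{-1}}: \calS \to \calS$ is a unital complete order isomorphism and $\iota$ is a unital complete order embedding, so $\iota \circ \alpha_{g^{-1}}$ is a unital complete order embedding; since $\alpha_{g^{-1}}$ is surjective, its image is $\iota(\alpha_{g^{-1}}(\calS)) = \iota(\calS)$, which generates $C^*_{\env}(\calS)$. Applying the universal property of the $C^*$-envelope to this cover yields a unique $*$-epimorphism $\overline{\alpha}_g : C^*_{\env}(\calS) \to C^*_{\env}(\calS)$ satisfying $\overline{\alpha}_g \circ (\iota \circ \alpha_{g^{-1}}) = \iota$, that is, $\overline{\alpha}_g \circ \iota = \iota \circ \alpha_g$. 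This is precisely the statement that the admissibility diagram commutes for $\rho = \iota$.

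It then remains to check that $g \mapsto \overline{\alpha}_g$ is a homomorphism into $\Aut(C^*_{\env}(\calS))$. For this I would compute, for $g, h \in G$, that $(\overline{\alpha}_g \circ \overline{\alpha}_h) \circ \iota = \overline{\alpha}_g \circ \iota \circ \alpha_h = \iota \circ \alpha_g \circ \alpha_h = \iota \circ \alpha_{gh} = \overline{\alpha}_{gh} \circ \iota$. Thus $\overline{\alpha}_g \circ \overline{\alpha}_h$ and $\overline{\alpha}_{gh}$ are two $*$-homomorphisms of $C^*_{\env}(\calS)$ agreeing on $\iota(\calS)$; since the set on which two $*$-homomorphisms agree is a norm-closed $*$-subalgebra and $\iota(\calS)$ generates $C^*_{\env}(\calS)$, they coincide, giving $\overline{\alpha}_g \circ \overline{\alpha}_h = \overline{\alpha}_{gh}$. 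Taking $h = g^{-1}$ shows $\overline{\alpha}_g \circ \overline{\alpha}_{g^{-1}} = \overline{\alpha}_e = \id$ (the last equality again by agreement on generators), and likewise in the other order, so each $\overline{\alpha}_g$ is a $*$-automorphism and $\overline{\alpha}: G \to \Aut(C^*_{\env}(\calS))$ is a genuine action. No continuity needs to be checked, since $G$ is discrete.

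The argument has no serious obstacle; the only point requiring care is the bookkeeping with inverses (one must feed $\alpha_{g^{-1}}$ rather than $\alpha_g$ into the universal property so that the resulting diagram commutes in the direction demanded by $\alpha$-admissibility) and the observation that the homomorphism and automorphism properties are forced purely by the uniqueness in the universal property, equivalently by the fact that a $*$-homomorphism out of $C^*_{\env}(\calS)$ is determined by its restriction to the generating operator system $\iota(\calS)$. An alternative route, should one prefer to avoid invoking the universal property directly, is to fix a maximal (unique-extension-property) representation $\phi$ of $\calS$, note that $\phi \circ \alpha_g$ is again maximal and hence generates $C^*_{\env}(\calS)$, and read off $\overline{\alpha}_g$ from the induced $*$-homomorphisms; this yields the same action.
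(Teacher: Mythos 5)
Your proposal is correct and follows essentially the same route as the paper: extend each $\alpha_g$ to a $*$-epimorphism $\overline{\alpha}_g$ of $C^*_{\env}(\calS)$ via the universal property of the $C^*$-envelope, then deduce the homomorphism and automorphism properties from the fact that a $*$-homomorphism out of $C^*_{\env}(\calS)$ is determined by its restriction to the generating copy of $\calS$. Your explicit bookkeeping with $\alpha_{g^{-1}}$ to get the admissibility diagram to commute in the right direction is a point the paper glosses over, but the substance is identical.
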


\begin{proof}
  Let $g \in G$. Since $\iota \circ \alpha_g$ is a complete order embedding of $\calS$ into $C^*_{\env}(\calS)$, by the universal property of $C^{\ast}$-envelopes, there is a unique surjective  $*$-homomorphism $\overline{\alpha}_g: C^*_{\env}(\calS) \to C^*_{\env}(\calS)$ for which the diagram
  \begin{center}
    \begin{tikzcd}
      C^*_{\env}(\calS) \arrow{r}{\alpha_g} & C^*_{\env}(\calS) \\
      \calS \arrow{u}{\iota} \arrow{r}{\alpha_g} & \calS \arrow{u}{\iota}
    \end{tikzcd}
  \end{center}
  commutes. For every $g,h \in G$, the map $\overline{\alpha}_{gh} \circ (\overline{\alpha}_g \circ \overline{\alpha}_h)^{-1}$ restricts to the identity on $\calS$.  By uniqueness, we must have $\overline{\alpha}_{gh} \circ (\overline{\alpha}_g \circ \overline{\alpha}_h)^{-1}=\id_{C_{\env}^*(\calS)}$, so that $\overline{\alpha}_{gh}=\overline{\alpha}_g \circ \overline{\alpha}_h$. Evidently we have $\overline{\alpha}_e=\id_{C_{\env}^*(\calS)}$. In particular, $\overline{\alpha}_g \circ \overline{\alpha}_{g^{-1}}=\overline{\alpha}_{g^{-1}} \circ \overline{\alpha}_g=\id_{C_{\env}^*(\calS)}$, so each $\overline{\alpha}_g$ is an automorphism. Then $\overline{\alpha}: G \acts C^*_{\env}(\calS)$ is a group action which is admissible with respect to the dynamical system.
\end{proof}

\begin{definition}
	Let $(\calS,G,\frakg,\alpha)$ be a dynamical system. Define the \emph{reduced crossed product} $\calS \rtimes_{\alpha,\lambda} \frakg$ to be the reduced crossed product $\calS \rtimes_{\alpha,\lambda}^{(C^*_{env}(\calS),\iota)} \frakg$ relative to $(C^*_{env}(\calS),\iota)$.
\end{definition}

\subsection{The $C^{\ast}$-envelope of a reduced crossed product}

The goal of this section is to prove the identity
\begin{align*}
  C^*_{\env}(\calS \rtimes_{\alpha,\lambda} \frakg) = C^*_{\env}(\calS) \rtimes_{\alpha,\lambda} G
\end{align*}
for any dynamical system $(\calS, G, \frakg, \alpha)$.

The next lemma contains some useful facts relating to hyperrigidity.

\begin{lemma}
\label{lemma: uep properties}
  Let $\calS$ be an operator system.
  \begin{enumerate}
    \item If $\alpha$ is an automorphism on $\calS$ and $\pi: \calS \to B(\calH)$ is a representation with the unique extension property, then $\pi \circ \alpha$ has the unique extension property.
    \item If $\pi_i: \calS \to B(\calH_i)$ is a maximal representation for each $i \in I$, then $\bigoplus_i \pi_i$ is also maximal.
  \end{enumerate}
\end{lemma}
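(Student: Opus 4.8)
The plan is to reduce both statements to the notion of \emph{maximality}, using the Dritschel--McCollough theorem (cited in the excerpt) that a ucp map has the unique extension property if and only if it is maximal. Throughout I will work with the block-matrix description of a dilation from the preliminaries: a dilation $\rho$ of $\phi$ is trivial precisely when the off-diagonal corners $a_x$ (equivalently $c_x = a_{x^*}$) vanish on all of $\calS$, i.e.\ when the subspace $V\calH$ reduces $\rho(\calS)$.

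For part (1), I would first invoke Dritschel--McCollough to replace ``unique extension property'' by ``maximal'', so that it suffices to show $\pi \circ \alpha$ is maximal given that $\pi$ is. The key observation is that precomposition with the automorphism $\alpha$ sets up a triviality-preserving bijection between dilations. Concretely, given any dilation $\rho \colon \calS \to B(\calK)$ of $\pi\circ\alpha$ with intertwining isometry $V$, the map $\rho \circ \alpha^{-1}$ is again ucp (as $\alpha^{-1}$ is a unital complete order isomorphism) and satisfies $V\pi(x) = V(\pi\circ\alpha)(\alpha^{-1}(x)) = \rho(\alpha^{-1}(x))V = (\rho\circ\alpha^{-1})(x)V$ for all $x$, so it is a dilation of $\pi$ via the same isometry $V$. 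Maximality of $\pi$ forces $\rho\circ\alpha^{-1}$ to be trivial, i.e.\ its off-diagonal corners vanish on all of $\calS$; since $\rho(x) = (\rho\circ\alpha^{-1})(\alpha(x))$ and $\alpha$ is a bijection of $\calS$, the off-diagonal corners of $\rho$ vanish too. Hence $\rho$ is trivial and $\pi\circ\alpha$ is maximal.

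For part (2), set $\pi = \bigoplus_i \pi_i$ on $\calH = \bigoplus_i \calH_i$ and let $\rho \colon \calS \to B(\calK)$ be any dilation of $\pi$ with intertwining isometry $V$. The plan is to show $V\calH$ reduces $\rho(\calS)$ by treating each summand separately. Because $\pi$ is block-diagonal, for $\xi \in \calH_i$ we have $\pi(x)\xi = \pi_i(x)\xi$, so the restriction $V_i := V|_{\calH_i}$ is an isometry satisfying $\rho(x)V_i = V_i\pi_i(x)$; thus $\rho$ is a dilation of the maximal map $\pi_i$ via $V_i$, and maximality of $\pi_i$ gives that $V_i\calH_i$ reduces $\rho(\calS)$. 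Since $V$ is isometric the subspaces $V_i\calH_i$ are mutually orthogonal and their closed span is $V\calH$, so $P_{V\calH}$ is the strong-operator-limit of the partial sums of the mutually commuting projections $P_{V_i\calH_i}$, each of which commutes with every $\rho(x)$. Passing commutation with the fixed bounded operator $\rho(x)$ to the strong-operator limit shows $P_{V\calH}$ commutes with $\rho(\calS)$. Thus $V\calH$ reduces $\rho(\calS)$, the dilation $\rho$ is trivial, and $\bigoplus_i\pi_i$ is maximal.

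I expect the only real point of care to be the bookkeeping in part (2): one must verify that triviality of $\rho$ \emph{as a dilation of each individual $\pi_i$} is exactly the statement that $V_i\calH_i$ is a reducing subspace, and then that reducingness survives the (possibly infinite) orthogonal direct sum. The latter is the sole place where a limiting argument enters, and it rests on the elementary fact that commutation with a fixed bounded operator is preserved under strong-operator limits of the partial-sum projections. Part (1) is essentially formal once the change of variables $\rho \mapsto \rho\circ\alpha^{-1}$ is in place.
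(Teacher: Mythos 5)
Your proposal is correct, but it is doing substantially more work than the paper, which disposes of both parts by deferral: part (1) is declared to hold ``by the definition of the unique extension property'' and part (2) is attributed to Arveson's result that direct sums of maximal maps are maximal. For (1), the paper's implicit argument runs through the unique extension property itself: using the (already established) extension of $\alpha$ to an automorphism $\overline{\alpha}$ of $C_{\env}^*(\calS)$, any ucp extension of $\pi\circ\alpha$ precomposed with $\overline{\alpha}^{-1}$ is a ucp extension of $\pi$, and uniqueness and multiplicativity transfer back. You instead pass through the Dritschel--McCullough equivalence with maximality and perform the change of variables $\rho\mapsto\rho\circ\alpha^{-1}$ at the level of dilations; this is equally valid, is purely formal in the same way, and has the mild advantage of not invoking the extension of $\alpha$ to the $C^*$-envelope. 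For (2), your argument --- restrict the dilating isometry to each $\calH_i$, use maximality of $\pi_i$ to see that each $V_i\calH_i$ reduces $\rho(\calS)$, and pass to the strong-operator limit of the partial-sum projections to conclude that $V\calH$ reduces $\rho(\calS)$ --- is a correct, self-contained reconstruction of the cited fact, and you correctly identify the two points needing care: that triviality of a dilation is exactly reducibility of the range of the isometry, and that commutation with a fixed bounded operator survives strong-operator limits. In short, where the paper cites, you prove; both routes land in the same place.
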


\begin{proof}
The fact that (1) holds is by the definition of the unique extension property.  The proof of (2) is due to Arveson \cite[Proposition 4.4]{arveson11}.
\end{proof}

\begin{definition}[(Arveson, \cite{arveson11})]
  Let $\calS$ be an operator system in a $C^{\ast}$-algebra $\calC$. We say that $\calS$ is \textit{hyperrigid} in $\calC$ if whenever $\pi: \calC \to B(\calH)$ is a $*$-representation, the map $\pi{|_\calS}$ satisfies the unique extension property.
\end{definition}

The following theorem is helpful for our purposes.

\begin{theorem}\label{theorem: hyperrigid is envelope}
  If $\calS$ is an operator system in a unital $C^*$-algebra $\calC$ and $\calS$ is hyperrigid in $\calC$, then $\calC=C_{\env}^*(\calS)$.
\end{theorem}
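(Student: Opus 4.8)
The plan is to obtain the canonical $*$-epimorphism onto $C^*_{\env}(\calS)$ from the universal property of the $C^*$-envelope, and then to force it to be injective by feeding a faithful representation of $\calC$ into the definition of hyperrigidity. Throughout I take $\calC = C^*(\calS)$, as in Arveson's notion of hyperrigidity; this hypothesis is genuinely needed, since otherwise the conclusion fails (for instance $\calS = \C 1$ is vacuously hyperrigid in any unital $\calC$, yet $C^*_{\env}(\C) = \C$).

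I write $j : \calS \hookrightarrow \calC$ for the inclusion. Since $C^*(\calS) = \calC$, the pair $(\calC, j)$ is a $C^*$-cover of $\calS$, so the universal property of the $C^*$-envelope $(C^*_{\env}(\calS), \iota)$ supplies a surjective $*$-homomorphism $\pi : \calC \to C^*_{\env}(\calS)$ with $\pi \circ j = \iota$. To prove the theorem it is then enough to show that $\pi$ is injective, i.e. that the Shilov ideal $\ker \pi$ is trivial; once this is known, $\pi$ is a $*$-isomorphism intertwining the two embeddings of $\calS$, which is exactly the assertion $\calC = C^*_{\env}(\calS)$.

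Next I would fix any faithful $*$-representation $\sigma : \calC \to B(\calH)$ (for instance the universal representation). By hyperrigidity, the ucp map $\sigma|_\calS = \sigma \circ j$ has the unique extension property, so it admits a unique ucp extension $\Theta : C^*_{\env}(\calS) \to B(\calH)$ with $\Theta \circ \iota = \sigma \circ j$, and crucially $\Theta$ is a $*$-homomorphism. The two $*$-homomorphisms $\sigma$ and $\Theta \circ \pi$ from $\calC$ into $B(\calH)$ then agree on $\calS$, because $(\Theta \circ \pi)\circ j = \Theta \circ \iota = \sigma \circ j$; since $\calS$ generates $\calC$ as a $C^*$-algebra, they must agree on all of $\calC$, giving $\sigma = \Theta \circ \pi$. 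As $\sigma$ is injective, so is $\pi$, which completes the argument.

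The step requiring the most care is the bookkeeping among the three copies of $\calS$ — inside $\calC$ via $j$, inside $C^*_{\env}(\calS)$ via $\iota$, and inside $B(\calH)$ via $\sigma|_\calS$. The whole argument hinges on the fact that the extension $\Theta$ in the definition of the unique extension property is taken relative to the canonical embedding $\iota$, so that $\Theta \circ \iota = \sigma \circ j$ holds exactly; this is precisely what lets me factor $\sigma$ through $\pi$ and hence conclude that $\pi$ is injective. Notably, no appeal to maximality or to the Dritschel–McCollough characterization is required, since the definition of the unique extension property already delivers a $*$-homomorphic extension.
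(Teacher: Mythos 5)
Your proof is correct, including your (necessary) reading of the hypothesis as $\calC = C^*(\calS)$ — the paper uses this implicitly too, when it writes ``Evidently $\calC \simeq C^*(\pi_{|\calS})$.'' The route differs from the paper's in how the envelope is identified. The paper applies hyperrigidity to a faithful representation of $\calC$, passes from the unique extension property to \emph{maximality} of the restricted map, and then invokes the Dritschel--McCullough/Arveson theorem that maximal representations generate the $C^*$-envelope, so that $C^*(\pi_{|\calS}) \simeq C^*_{\env}(\calS)$. You instead work directly with the universal property of the envelope: you take the canonical $*$-epimorphism $\pi:\calC \to C^*_{\env}(\calS)$ whose kernel is the Shilov ideal, use the unique extension property (which, as defined in this paper, already produces a $*$-homomorphic extension $\Theta$ defined on $C^*_{\env}(\calS)$) to get $\sigma = \Theta\circ\pi$ on the generating set $\calS$ and hence on all of $\calC$, and conclude that $\ker\pi = 0$. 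The two arguments rest on the same input — hyperrigidity applied to a single faithful representation of $\calC$ — but yours is more self-contained: it bypasses the maximality characterization entirely, using only the definition of the unique extension property and the universal property of the envelope, whereas the paper leans on the Dritschel--McCullough theorem as a black box. Your bookkeeping of the three copies of $\calS$ and the factorization $\sigma = \Theta\circ\pi$ is exactly what makes the Shilov-ideal argument close up, and the counterexample $\calS = \C 1$ correctly justifies why $C^*(\calS)=\calC$ cannot be dropped.
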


\begin{proof}
  If $\calS \subset \calC$ is hyperrigid, then any faithful representation $\pi$ of $\calC$ is such that $\pi_{\calS}$ has the unique extension property. Thus, $\pi_{\calS}$ is maximal on $\calS$. By a theorem of Dritschel and McCollough \cite[Theorem 1.1]{dritschelmccullough}, since maximal representations generate $C^{\ast}$-envelopes, $C^*(\pi_{|\calS})$ is isomorphic to $C_{\env}^*(\calS)$.  Evidently $\calC \simeq C^*(\pi_{|\calS})$, so this proves that $\calC$ is the $C^{\ast}$-envelope of $\calS$.
\end{proof}

Recall that an operator subsystem $\calS$ of a unital $C^*$-algebra $\calC$ \textit{contains enough unitaries} if the the set of elements in $\calS$ that are unitary in $\calC$ generates $\calC$ as a $C^*$-algebra. The next result is already known. For convenience we include the proof.

\begin{lemma}
\label{containsenoughunitariesimplieshyperrigid}
  Suppose that $\calC$ is a unital $C^*$-algebra and that $\calS \subseteq \calC$ is an operator system which contains enough unitaries in $\calC$. Then the operator system $\calS$ is hyperrigid.
\end{lemma}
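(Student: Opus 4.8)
The plan is to unwind hyperrigidity into maximality and then exploit the unitaries through a multiplicative-domain argument. By the theorem of Dritschel and McCollough quoted above, a ucp map has the unique extension property if and only if it is maximal, so it suffices to show that for every $*$-representation $\pi:\calC\to B(\calH)$ the restriction $\phi:=\pi|_{\calS}$ is maximal, i.e. that every dilation of $\phi$ is trivial. So I would fix such a $\pi$ together with a dilation $\rho:\calS\to B(\calK)$, write $\calK=\calH\oplus\calH^{\perp}$ and $\rho(s)=\left[\begin{smallmatrix}\phi(s)&a_s\\ c_s&b_s\end{smallmatrix}\right]$, and aim to prove $a_s=0$ for all $s\in\calS$. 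The first move is to extend $\rho$ to a ucp map $\widetilde{\rho}:\calC\to B(\calK)$ by Arveson's extension theorem, and then pass to the compression $\psi:=P\,\widetilde{\rho}(\cdot)\,P:\calC\to B(\calH)$, where $P$ is the projection of $\calK$ onto $\calH$. This $\psi$ is ucp, and on $\calS$ its value is the $(1,1)$-corner of $\rho$, so $\psi|_{\calS}=\phi=\pi|_{\calS}$.

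The crucial observation is that every unitary of $\calS$ lies in the multiplicative domain of $\psi$. Indeed, if $u\in\calS$ is unitary in $\calC$ then $\psi(u)=\pi(u)$ is a unitary of $B(\calH)$, whence $\psi(u^{*}u)=\psi(1)=I=\psi(u)^{*}\psi(u)$ and likewise $\psi(uu^{*})=\psi(u)\psi(u)^{*}$. Since Choi's theorem guarantees that the multiplicative domain is a $C^{*}$-subalgebra of $\calC$ on which $\psi$ is multiplicative, and since by hypothesis the unitaries of $\calS$ generate $\calC$, the multiplicative domain must be all of $\calC$. In other words, $\psi:\calC\to B(\calH)$ is a genuine $*$-homomorphism.

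Finally I would play the multiplicativity of $\psi$ against the Choi–Schwarz inequality $\widetilde{\rho}(x^{*}x)\ge\widetilde{\rho}(x)^{*}\widetilde{\rho}(x)$. Setting $C_x:=(I-P)\,\widetilde{\rho}(x)\,P$ and compressing this inequality to $\calH$ yields, for every $x\in\calC$,
\[
0 \;\le\; \psi(x^{*}x) - \bigl(\psi(x)^{*}\psi(x) + C_x^{*}C_x\bigr) \;=\; -\,C_x^{*}C_x ,
\]
where the equality uses $\psi(x^{*}x)=\psi(x)^{*}\psi(x)$ from the previous step. Hence $C_x=0$ for all $x$; restricting to $s\in\calS$ gives $c_s=0$, and the adjointness relation $c_{s^{*}}=a_s^{*}$ (from $\widetilde{\rho}(s^{*})=\widetilde{\rho}(s)^{*}$) then forces $a_s=0$. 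Thus $\rho$ is a trivial dilation, $\phi$ is maximal, and as $\pi$ was arbitrary the system $\calS$ is hyperrigid. The genuine obstacle is precisely the passage from control on unitaries to control on all of $\calS$: products of unitaries in $\calS$ generally leave $\calS$, so one cannot argue with $\rho$ directly, and the $(2,2)$-corner $b_u$ of $\rho(u)$ need not be unitary, meaning $u$ need not belong to the multiplicative domain of $\widetilde{\rho}$ itself. Replacing $\widetilde{\rho}$ by the compressed map $\psi$ — whose multiplicative domain does capture every unitary of $\calS$ — is exactly what converts the generation hypothesis into a usable statement.
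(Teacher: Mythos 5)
Your proof is correct, but it takes a genuinely different route from the paper's. The paper argues by contradiction: it passes to a \emph{maximal} dilation $\psi$ of $\pi|_{\calS}$, uses the Dritschel--McCullough/Arveson machinery to upgrade $\psi$ to a $*$-homomorphism on $\calC$, and then reads off the vanishing of the off-diagonal corners by computing the $(1,1)$-entries of $\psi(u)\psi(u)^*$ and $\psi(u)^*\psi(u)$ for unitaries $u\in\calS$, propagating to products of unitaries by multiplicativity and then to all of $\calC$ by taking spans. You instead work with an \emph{arbitrary} dilation directly: Arveson's extension theorem plus compression to $\calH$ produces a ucp extension $\psi$ of $\pi|_{\calS}$ to $\calC$, Choi's multiplicative domain theorem (applied to the unitaries of $\calS$, which generate) forces $\psi$ to be a $*$-homomorphism, and the Schwarz inequality for $\widetilde{\rho}$ then kills the corner $(I-P)\widetilde{\rho}(\cdot)P$. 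Both proofs hinge on the same phenomenon --- a ucp map sending a unitary to a unitary has that unitary in its multiplicative domain --- but your version avoids the existence of maximal dilations entirely and is direct rather than by contradiction. It is worth noting that your multiplicative-domain step already proves the unique-extension-property formulation of hyperrigidity on the nose: any ucp extension of $\pi|_{\calS}$ to $\calC$ is multiplicative, hence agrees with $\pi$ on the generating set $\calS$ and therefore equals $\pi$; the Schwarz step is only needed because you chose to verify maximality instead. Your closing remark correctly identifies why one must compress before invoking the multiplicative domain ($b_u$ need not be unitary, so $u$ need not lie in the multiplicative domain of $\widetilde{\rho}$), and the final deduction $c_s=0\Rightarrow a_s=c_{s^*}^*=0$ using self-adjointness of $\calS$ is exactly right.
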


\begin{proof}
  Let $\pi:\calC \to B(\calH)$ be a $*$-representation, and suppose that the ucp map $\pi|_{\calS}$ is not maximal. Let $\psi: \calS \to B(\calK)$ be a maximal dilation of $\pi|_{\calS}$. Since $\psi$ is maximal, $\psi$ induces a $*$-homomorphism on $C^*(\calS) = \calC$, which we will also denote by $\psi$. For $s \in \calS$, we may decompose the operator $\psi(s)$ with respect to $\calK=\calH \oplus \calH^{\perp}$ as 
  \begin{align*}
  	\psi(s) = \left[\begin{array}{cc} \pi(s) & a_s \\ b_s & \chi(s) \end{array} \right]
  \end{align*}
  for operators $a_s \in B(\calH^{\perp},\calH)$, $b_s \in B(\calH,\calH^{\perp})$ and $\chi(s) \in B(\calH^{\perp})$. Let $u \in \calS$ be unitary in $\calC$. Since $\psi(u)$ must also be unitary, we know that the (1,1)-corner of the operators $\psi(u)\psi(u^*)$ and $\psi(u^*)\psi(u)$ must be the identity. A calculation shows that $(\psi(u)\psi(u^*))_{1,1} = I_{\calH} + a_{u}a_{u}^*$ and $(\psi(u^*)\psi(u))_{1,1} = I_{\calH} + b_{u}^*b_{u}$. Thus, both $a_{u}$ and $b_{u}$ must be zero, so that $\psi(u)=\pi(u) \oplus \chi(u)$. Therefore, if $u,v \in \calS$ are unitaries in $\calC$, then using the fact that $\psi$ is a $*$-homomorphism,
  $$\left[\begin{array}{cc} \pi(u)\pi(v) & 0 \\ 0 & \chi(u)\chi(v) \end{array}\right]=\psi(u)\psi(v)=\psi(uv)=\left[\begin{array}{cc} \pi(uv) & a_{uv} \\ b_{uv} & \chi(uv)\end{array}\right].$$
  Hence, $a_{uv}$ and $b_{uv}=0$. It easily follows that for any elements $u_1,...,u_n \in \calS$ that are unitary in $\calC$, we have
  $$\psi(u_1 \cdots u_n)=\left[\begin{array}{cc} \pi(u_1 \cdots u_n) & 0 \\ 0 & \chi(u_1 \cdots u_n) \end{array}\right].$$
  Since $\calC$ is generated by unitaries in $\calS$, $\calC$ is the span of elements of the form $u_1 \cdots u_n$ for elements $u_1,...,u_n$ of $\calS$ that are unitary in $\calC$. It follows that $\psi$ decomposes as $\pi \oplus \chi$ for some ucp map $\chi$ on $\calC$. Restricting to $\calS$, this proves that $\pi|_{\calS}$ is maximal, which is a contradiction. 
\end{proof}

This gives an alternate proof of Kavruk's result on $C^*$-envelopes \cite[Proposition 5.6]{kavruk14}.

\begin{corollary}
  Suppose that $\calC$ is a unital $C^*$-algebra and $\calS \subset \calC$ is an operator system that contains enough unitaries in $\calC$. Then $\calC$ is the $C^{\ast}$-envelope of $\calS$.
\end{corollary}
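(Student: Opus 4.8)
The plan is to observe that this corollary is an immediate consequence of the two results just proved, chained together. Both halves of the work have already been done, so the proof amounts to recognizing that the hypothesis of each result is supplied by the conclusion of the other.

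First I would invoke Lemma \ref{containsenoughunitariesimplieshyperrigid}: since $\calS \subseteq \calC$ contains enough unitaries in $\calC$, that lemma tells us directly that $\calS$ is hyperrigid in $\calC$. This is the content-bearing step, but it is already established—the lemma's proof showed that any maximal dilation $\psi$ of a restriction $\pi|_\calS$ must split off $\pi$ as a direct summand, by using that unitaries in $\calS$ dilate to direct sums $\pi(u)\oplus\chi(u)$ and that such unitaries generate $\calC$.

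Next I would apply Theorem \ref{theorem: hyperrigid is envelope}: since $\calS$ is now known to be hyperrigid in the unital $C^*$-algebra $\calC$, that theorem yields $\calC = C^*_{\env}(\calS)$. This completes the argument.

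I expect no genuine obstacle here, since the corollary is purely a formal composition of the preceding lemma and theorem; the only thing to verify is that the hypotheses match exactly (both require $\calC$ unital and $\calS$ an operator subsystem containing enough unitaries), which they do. The proof is therefore a single short paragraph citing \ref{containsenoughunitariesimplieshyperrigid} followed by \ref{theorem: hyperrigid is envelope}.
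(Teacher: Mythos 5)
Your proposal is correct and matches the paper's proof exactly: the paper also cites Lemma \ref{containsenoughunitariesimplieshyperrigid} to obtain hyperrigidity of $\calS$ in $\calC$ and then applies Theorem \ref{theorem: hyperrigid is envelope} to conclude $\calC = C^*_{\env}(\calS)$. No issues.
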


\begin{proof}
  By Lemma \ref{containsenoughunitariesimplieshyperrigid}, $\calS$ is hyperrigid. By Theorem~\ref{theorem: hyperrigid is envelope}, $\calC=C_{\env}^*(\calS)$.
\end{proof}

In order to show that $C_{\env}^*(\calS \rtimes_{\alpha,\lambda} \frakg)=C_{\env}^*(\calS) \rtimes_{\alpha} G$, we require the following lemma.

\begin{lemma}\label{lemma: maximals lift to reduced crossed product}
Suppose that $(\calS,G,\frakg,\alpha)$ is a dynamical system. Let $\pi:C_{\env}^*(\calS) \hookrightarrow B(\calH)$ be a faithful representation that is maximal on $\calS$. Let $(\overline{\pi},\lambda_{\calH})$ be the covariant extension of $\pi$ to $\calH \otimes \ell^2(G)$. Then the integrated form $\overline{\pi} \rtimes \lambda_{\calH}:C_{\env}^*(\calS) \rtimes_{\alpha,\lambda} G \to B(\calH \otimes \ell^2(G))$ is maximal on $\calS \rtimes_{\alpha,\lambda} \frakg$.
\end{lemma}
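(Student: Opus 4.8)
The plan is to show that $\Phi := (\overline{\pi} \rtimes \lambda_{\calH})|_{\calS \rtimes_{\alpha,\lambda}\frakg}$ is maximal by verifying directly that every dilation of $\Phi$ is trivial. Write $\calK_0 := \calH \otimes \ell^2(G)$ and let $P$ denote the orthogonal projection onto $\calK_0$. The first and most important observation is that the restriction $\Phi|_{\calS}$ is already maximal. Indeed, in the regular (covariant) representation one has $\overline{\pi}(a) = \bigoplus_{h \in G} \pi(\overline{\alpha}_{h^{-1}}(a))$, so on $\calS$ the map $\Phi|_{\calS}$ is the direct sum $\bigoplus_{h \in G} (\pi|_{\calS}) \circ \alpha_{h^{-1}}$, using that $\overline{\alpha}$ restricts to $\alpha$ on $\calS$ by admissibility. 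Since $\pi|_{\calS}$ has the unique extension property and each $\alpha_{h^{-1}}$ is an automorphism of $\calS$, Lemma \ref{lemma: uep properties}(1) shows each summand has the unique extension property, and Lemma \ref{lemma: uep properties}(2) shows the direct sum $\Phi|_{\calS}$ is maximal. Its unique extension to $C^*_{\env}(\calS)$ is the $*$-representation $\overline{\pi}$.

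Now let $\Psi : \calS \rtimes_{\alpha,\lambda}\frakg \to B(\calK)$ be a dilation of $\Phi$, with $\calK = \calK_0 \oplus \calM$ and $P\Psi(\cdot)P = \Phi$. By Arveson's extension theorem I would extend $\Psi$ to a ucp map $\hat{\Psi}$ on the reduced crossed product $C^*$-algebra $\calD := C^*_{\env}(\calS) \rtimes_{\alpha,\lambda} G$, and set $\Lambda := P\hat{\Psi}(\cdot)P : \calD \to B(\calK_0)$. Then $\Lambda$ is ucp and restricts to $\Phi$ on $\calS \rtimes_{\alpha,\lambda}\frakg$; in particular $\Lambda|_{\calS} = \Phi|_{\calS}$. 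Since $\Phi|_{\calS}$ has the unique extension property, $\Lambda|_{C^*_{\env}(\calS)}$ must coincide with its unique extension and is therefore a $*$-homomorphism; consequently $C^*_{\env}(\calS)$ lies in the multiplicative domain of $\Lambda$. Moreover, for $g \in \frakg$ the element $\lambda_g = 1_{\calS}\lambda_g$ lies in $\calS \rtimes_{\alpha,\lambda}\frakg$ and $\Lambda(\lambda_g) = \Phi(\lambda_g)$ is a unitary on $\calK_0$, whence $\lambda_g$ also lies in the multiplicative domain. As the multiplicative domain is a $C^*$-subalgebra and $C^*_{\env}(\calS)$ together with $\{\lambda_g : g \in \frakg\}$ generates $\calD$, I would conclude that $\Lambda$ is multiplicative on all of $\calD$, i.e. $\Lambda$ is a $*$-homomorphism.

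Finally, I would invoke the rigidity coming from the Schwarz inequality: if the compression $\Lambda = P\hat{\Psi}(\cdot)P$ of a ucp map is a $*$-homomorphism, then $\calK_0$ reduces $\hat{\Psi}(\calD)$. Concretely, for $x \in \calD$ the Schwarz inequality gives $\Lambda(x^*x) = P\hat{\Psi}(x^*x)P \geq P\hat{\Psi}(x)^*\hat{\Psi}(x)P = \Lambda(x)^*\Lambda(x) + (P^\perp \hat{\Psi}(x)P)^*(P^\perp \hat{\Psi}(x)P)$, while multiplicativity of $\Lambda$ forces $\Lambda(x^*x) = \Lambda(x)^*\Lambda(x)$; hence $P^\perp\hat{\Psi}(x)P = 0$, and symmetrically $P\hat{\Psi}(x)P^\perp = 0$. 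Thus $\hat{\Psi}(x)$ is block diagonal for every $x \in \calD$, so in particular $\Psi(x) = \hat{\Psi}(x)$ is a trivial dilation for $x \in \calS \rtimes_{\alpha,\lambda}\frakg$, proving that $\Phi$ is maximal. The main obstacle is that the generators $a\lambda_g$ are neither selfadjoint nor unitary, so a direct block-matrix computation on them is awkward; the device that resolves this is to pass to the enveloping $C^*$-algebra and exploit that the multiplicative domain of $\Lambda$ is forced to be everything by combining the unique extension property on $\calS$ with the unitarity of the $\lambda_g$.
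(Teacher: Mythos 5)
Your proof is correct and follows essentially the same route as the paper's: decompose $\overline{\pi}$ as a direct sum to get maximality of $\Phi|_{\calS}$ via Lemma \ref{lemma: uep properties}, then show the whole of $C_{\env}^*(\calS) \rtimes_{\alpha,\lambda} G$ falls into the multiplicative domain of any ucp extension by combining the unique extension property on $\calS$ with the unitaries $\lambda_g$. The only (harmless) differences are that you verify the multiplicative-domain membership of the $\lambda_g$ directly from the fact that a ucp map sending a unitary to a unitary is multiplicative there, where the paper cites hyperrigidity of $\calS_\lambda(\frakg)$, and that you unpack the implication ``unique extension property $\Rightarrow$ maximal'' by hand via the Schwarz inequality, where the paper simply invokes the Dritschel--McCullough equivalence.
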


\begin{proof}
Since $\overline{\pi} = \bigoplus_{g \in G} \pi \circ \alpha_g$, Lemma \ref{lemma: uep properties} shows that $\overline{\pi}$ has the unique extension property on $\calS$. We claim that $ \overline{\pi} \rtimes \lambda_\calH$ has the unique extension property on $\calS \rtimes_{\alpha,\lambda} \frakg$. If this were true, then $\overline{\pi} \rtimes \lambda_{\calH}$ would be maximal on $\calS \rtimes_{\alpha,\lambda} \frakg$. Thus, it remains to show that $\overline{\pi} \rtimes \lambda_{\calH}$ has the unique extension property. Suppose that $\rho: C^*_{\env}(\calS) \rtimes_{\alpha,\lambda} G \to B(\calH)$ is a ucp extension of $ \overline{\pi}\rtimes \lambda_\calH|_{\calS \rtimes_{\alpha,\lambda} \frakg}$. We observe that, by maximality of $ \overline{\pi}$, we must have $\rho|_{C^*_{\env}(\calS)} =  \overline{\pi} \rtimes \lambda_\calH|_{C^*_{\env}(\calS)}=\pi$. Recall that $\calS_\lambda(\frakg) = \Span\{\lambda_g: g \in \frakg\}$. By Lemma \ref{containsenoughunitariesimplieshyperrigid}, $\calS_{\lambda}(\frakg)$ is hyperrigid, since it contains enough unitaries in its $C^*$-envelope. Since $\calS_\lambda(\frakg)$ is hypperrigid, we get the identity $\rho|_{C^*(\calS_\lambda(\frakg))} =  \overline{\pi} \rtimes \lambda_\calH|_{C^*(\calS_\lambda(\frakg))}$. Thus, $C^*(\calS_{\lambda}(\frakg))$ is in the multiplicative domain of $\rho$. Now, let $g \in G$. Since $\lambda_g \in C^*(\calS_\lambda(\frakg))$, for $a \in C^*_{\env}(\calS)$ we obtain the identity
  \begin{align*}
    \rho(a \lambda_g) = \rho(a)\rho(\lambda_g) =  (\overline{\pi} \rtimes \lambda_\calH(a))(\overline{\pi} \rtimes \lambda_\calH(\lambda_g)) =  \overline{\pi}\rtimes \lambda_\calH(a \lambda_g)\;.
  \end{align*}
  This proves that $\rho =  \overline{\pi} \rtimes \lambda_\calH$, so that $\overline{\pi} \rtimes \lambda_{\calH}$ has the unique extension property.
\end{proof}

We are now in a position to prove the desired result on $C^*$-envelopes of reduced crossed products.

\begin{theorem}\label{theorem: C*-envelope of reduced}
  Suppose that $(\calS,G,\frakg, \alpha)$ is a dynamical system. Then there is a canonical isomorphism
  \begin{align*}
    C^*_{\env}(\calS \rtimes_{\alpha,\lambda} \frakg) \simeq C^*_{\env}(\calS) \rtimes_{\alpha,\lambda} G\;.
  \end{align*}
\end{theorem}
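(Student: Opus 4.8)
The plan is to exhibit a single representation of the reduced crossed product $C^*$-algebra $C^*_{\env}(\calS) \rtimes_{\alpha,\lambda} G$ that is simultaneously \emph{faithful} on this $C^*$-algebra and \emph{maximal} on the operator subsystem $\calS \rtimes_{\alpha,\lambda} \frakg$, and then to invoke Arveson's theorem that a maximal representation of an operator system generates its $C^*$-envelope \cite[Corollary 3.3]{arveson03}. The representation we need is precisely the one supplied by Lemma \ref{lemma: maximals lift to reduced crossed product}, so the genuinely hard step is already done; what remains is to assemble the pieces and verify two faithfulness/generation bookkeeping points.

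First I would fix a faithful representation $\pi: C^*_{\env}(\calS) \hookrightarrow B(\calH)$ that is maximal on $\calS$. Such a $\pi$ exists: starting from any faithful representation $\sigma$ of $C^*_{\env}(\calS)$ and a maximal dilation $\psi$ of the ucp map $\sigma|_{\calS}$ (which exists by \cite[Theorem 1.3]{arveson03}), the unique $*$-homomorphic extension of $\psi$ intertwines $\sigma$ along the dilation isometry on the generating system $\calS$, hence on all of $C^*_{\env}(\calS)$; thus it contains $\sigma$ as a subrepresentation and is faithful, while its restriction to $\calS$ is the maximal map $\psi$. Forming the covariant extension $(\overline{\pi}, \lambda_{\calH})$ on $\calH \otimes \ell^2(G)$, the integrated form $\overline{\pi} \rtimes \lambda_{\calH}$ is a faithful representation of $C^*_{\env}(\calS) \rtimes_{\alpha,\lambda} G$, since a regular representation built from a faithful representation of the coefficient algebra is faithful on the reduced crossed product. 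By Lemma \ref{lemma: maximals lift to reduced crossed product}, this same map $\overline{\pi} \rtimes \lambda_{\calH}$ is maximal on $\calS \rtimes_{\alpha,\lambda} \frakg$.

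To finish, I would observe that the operator system $\calS \rtimes_{\alpha,\lambda} \frakg = \Span\{a\lambda_g : a \in \calS,\ g \in \frakg\}$ generates $C^*_{\env}(\calS) \rtimes_{\alpha,\lambda} G$ as a $C^*$-algebra: taking $g = e \in \frakg$ recovers $\calS$, hence all of $C^*_{\env}(\calS)$, while taking $a = 1$ recovers $\{\lambda_g : g \in \frakg\}$, which generates $\{\lambda_s : s \in G\}$ because $\frakg$ generates $G$; together these generate the reduced crossed product. Since $\overline{\pi}\rtimes\lambda_{\calH}$ is faithful on $C^*_{\env}(\calS)\rtimes_{\alpha,\lambda}G$, the $C^*$-algebra generated by $(\overline{\pi}\rtimes\lambda_{\calH})(\calS \rtimes_{\alpha,\lambda}\frakg)$ is precisely $C^*_{\env}(\calS)\rtimes_{\alpha,\lambda}G$. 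On the other hand, because $\overline{\pi}\rtimes\lambda_{\calH}$ is maximal on $\calS \rtimes_{\alpha,\lambda}\frakg$, Arveson's result identifies that same generated $C^*$-algebra with $C^*_{\env}(\calS \rtimes_{\alpha,\lambda}\frakg)$. Comparing the two descriptions yields the isomorphism, which is canonical in that it fixes $\calS \rtimes_{\alpha,\lambda}\frakg$ and sends $a\lambda_g \mapsto a\lambda_g$.

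The main remaining subtlety, beyond Lemma \ref{lemma: maximals lift to reduced crossed product} itself, is the faithfulness of the integrated form $\overline{\pi}\rtimes\lambda_{\calH}$ on the \emph{reduced} crossed product; here the use of the left regular representation together with a faithful $\pi$ on the coefficient algebra is essential, and is exactly the feature that forces $C^*_\lambda(G)$ rather than the full group $C^*$-algebra into the picture.
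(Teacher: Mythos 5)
Your proposal is correct and follows essentially the same route as the paper's own proof: both take a faithful representation $\pi$ of $C_{\env}^*(\calS)$ that is maximal on $\calS$, invoke Lemma \ref{lemma: maximals lift to reduced crossed product} to see that $\overline{\pi} \rtimes \lambda_{\calH}$ is maximal on $\calS \rtimes_{\alpha,\lambda} \frakg$, and conclude via Arveson's theorem that the generated $C^*$-algebra is the $C^*$-envelope. The only difference is that you spell out the bookkeeping (existence of a faithful maximal $\pi$, faithfulness of the regular representation, and that the operator system generates the reduced crossed product) that the paper leaves implicit; these verifications are all correct.
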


\begin{proof}
    Let $\pi:C_{\env}^*(\calS) \hookrightarrow B(\calH)$ be a maximal representation, and let $\overline{\pi} \rtimes \lambda_{\calH}$ be the associated integrated form of the covariant extension $(\overline{\pi},\lambda_{\calH})$ of $\pi$. By Lemma \ref{lemma: maximals lift to reduced crossed product}, $\overline{\pi} \rtimes \lambda_{\calH}$ is maximal on $\calS \rtimes_{\alpha,\lambda} \frakg$. Thus, the $C^*$-algebra generated by $(\overline{\pi} \rtimes \lambda_{\calH})(\calS \rtimes_{\alpha,\lambda} \frakg)$ is the $C^*$-envelope of $\calS \rtimes_{\alpha,\lambda} \frakg$.  Thus, we obtain the isomorphism
  \begin{align*}
     \overline{\pi} \rtimes \lambda_\calH : C^*_{\env}(\calS) \rtimes_{\alpha,\lambda} G \to C^*_{\env}(\calS \rtimes_{\alpha,\lambda} \frakg)\;.
  \end{align*}
	which completes the proof.
\end{proof}

Hyperrigidity is also preserved by the reduced crossed product.

\begin{corollary}\label{cor: hyperrigid reduced}
  Suppose that $(\calS,G,\frakg,\alpha)$ is a dynamical system. If $\calS$ is hyperrigid in $C_{\env}^*(\calS)$, then $\calS \rtimes_{\alpha,\lambda} \frakg$ is hyperrigid in $C_{\env}^*(\calS \rtimes_{\alpha,\lambda} \frakg)$.
\end{corollary}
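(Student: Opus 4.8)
The plan is to combine the identification $C^*_{\env}(\calS \rtimes_{\alpha,\lambda} \frakg) = C^*_{\env}(\calS) \rtimes_{\alpha,\lambda} G$ from Theorem~\ref{theorem: C*-envelope of reduced} with a multiplicative-domain argument, essentially running the proof of Lemma~\ref{lemma: maximals lift to reduced crossed product} over an \emph{arbitrary} representation rather than the covariant extension of a single maximal one. Write $\calD := C^*_{\env}(\calS) \rtimes_{\alpha,\lambda} G$ and $\calC := C^*_{\env}(\calS)$, regarded as the canonical copies sitting inside $\calD$. The key structural observation is that $\calD$ is generated, as a $C^*$-algebra, by the two subalgebras $\calC$ (embedded via $a \mapsto a\lambda_e$) and $C^*_\lambda(G)$ (the copy generated by the unitaries $\lambda_g$), and that the operator system $\calS \rtimes_{\alpha,\lambda} \frakg$ contains both $\calS$ (as $\calS\lambda_e$, since $e \in \frakg$) and $\calS_\lambda(\frakg) = \Span\{\lambda_g : g \in \frakg\}$ (since $1 \in \calS$). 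Each of these operator systems is hyperrigid in its ambient subalgebra: $\calS$ in $\calC = C^*_{\env}(\calS)$ by hypothesis, and $\calS_\lambda(\frakg)$ in $C^*_\lambda(G) = C^*_{\env}(\calS_\lambda(\frakg))$ by Lemma~\ref{containsenoughunitariesimplieshyperrigid}.

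With this in place I would argue as follows. Let $\sigma : \calD \to B(\calK)$ be any $*$-representation; it suffices to show $\sigma|_{\calS\rtimes_{\alpha,\lambda}\frakg}$ has the unique extension property. So let $\rho : \calD \to B(\calK)$ be any ucp extension of $\sigma|_{\calS\rtimes_{\alpha,\lambda}\frakg}$, and the goal is to prove $\rho = \sigma$. Since $\sigma|_\calC$ is a $*$-representation of $\calC$, hyperrigidity of $\calS$ gives that $\sigma|_\calS$ has the unique extension property, so its only ucp extension to $\calC$ is $\sigma|_\calC$ itself. As $\rho|_\calC$ is a ucp extension of $\rho|_\calS = \sigma|_\calS$, we conclude $\rho|_\calC = \sigma|_\calC$, which is a $*$-homomorphism; hence $\calC$ lies in the multiplicative domain of $\rho$. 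The identical argument applied to $\calS_\lambda(\frakg) \subseteq C^*_\lambda(G)$, using that $\sigma|_{\calS_\lambda(\frakg)}$ has the unique extension property, shows $\rho|_{C^*_\lambda(G)} = \sigma|_{C^*_\lambda(G)}$ and places $C^*_\lambda(G)$ in the multiplicative domain of $\rho$ as well.

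Finally, since the multiplicative domain of $\rho$ is a $C^*$-subalgebra of $\calD$ containing both $\calC$ and $C^*_\lambda(G)$, and these generate $\calD$, the multiplicative domain is all of $\calD$; thus $\rho$ is a $*$-homomorphism. On the generators it agrees with $\sigma$ via $\rho(a\lambda_g) = \rho(a)\rho(\lambda_g) = \sigma(a)\sigma(\lambda_g) = \sigma(a\lambda_g)$ for all $a \in \calC$ and $g \in G$, so $\rho = \sigma$. This shows $\sigma|_{\calS\rtimes_{\alpha,\lambda}\frakg}$ has the unique extension property, and since $\sigma$ was arbitrary, $\calS \rtimes_{\alpha,\lambda}\frakg$ is hyperrigid in $C^*_{\env}(\calS \rtimes_{\alpha,\lambda}\frakg)$.

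I expect the only delicate point to be the structural claim that $C^*_\lambda(G)$ embeds as a genuine $C^*$-subalgebra of $\calD$ with $\calS_\lambda(\frakg)$ hyperrigid inside it. This, however, is precisely the ingredient already extracted in the proof of Lemma~\ref{lemma: maximals lift to reduced crossed product} (where hyperrigidity of $\calS_\lambda(\frakg)$ and the presence of $C^*(\calS_\lambda(\frakg))$ are used), so no new work is required and the corollary amounts to a clean repackaging of that argument.
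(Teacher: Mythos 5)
Your proposal is correct and follows essentially the same route as the paper's proof: identify $C^*_{\env}(\calS \rtimes_{\alpha,\lambda} \frakg)$ with $C^*_{\env}(\calS) \rtimes_{\alpha,\lambda} G$ via Theorem~\ref{theorem: C*-envelope of reduced}, then use hyperrigidity of $\calS$ in $C^*_{\env}(\calS)$ and of $\calS_\lambda(\frakg)$ in $C^*_\lambda(G)$ (Lemma~\ref{containsenoughunitariesimplieshyperrigid}) to place both generating subalgebras in the multiplicative domain of an arbitrary ucp extension, forcing it to agree with the given representation. No substantive difference from the paper's argument.
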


\begin{proof}
By Theorem \ref{theorem: C*-envelope of reduced}, we have $C_{\env}^*(\calS \rtimes_{\alpha,\lambda} \frakg)=C_{\env}^*(\calS) \rtimes_{\alpha,\lambda} G$. Suppose that $\rho:C_{\env}^*(\calS) \rtimes_{\alpha,\lambda} G \to B(\calH)$ is a unital $*$-homomorphism and let $\phi:C_{\env}^*(\calS) \rtimes_{\alpha,\lambda} G \to B(\calH)$ be a ucp extension of $\rho_{|\calS \rtimes_{\alpha,\lambda} \frakg}$. Since $\calS$ is hyperrigid in $C_{\env}^*(\calS)$, $\phi_{|C_{\env}^*(\calS)}$ satisfies the unique extension property on $\calS$. On the other hand, $\calS_{\lambda}(\frakg)$ is hyperrigid in $C_{\lambda}^*(G)$ by Lemma \ref{containsenoughunitariesimplieshyperrigid}. Thus, $\phi$ agrees with $\rho$ when restricted to the copy of $C_{\lambda}^*(G)$ in $C_{\env}^*(\calS) \rtimes_{\alpha,\lambda} G$. In particular, $C_{\env}^*(\calS)$ and the copy of $C_{\lambda}^*(G)$ are contained in the multiplicative domain of $\phi$.  As these two algebras generate $C_{\env}^*(\calS) \rtimes_{\alpha,\lambda} G$ as a $C^*$-algebra, it follows that $\rho=\phi$.  Therefore, $\rho$ is maximal on $\calS \rtimes_{\alpha,\lambda} \frakg$.  Since $\rho$ was an arbitrary representation of $C_{\env}^*(\calS) \rtimes_{\alpha,\lambda} G$, it follows that $\calS \rtimes_{\alpha,\lambda} \frakg$ is hyperrigid in $C_{\env}^*(\calS \rtimes_{\alpha,\lambda} \frakg)$.
\end{proof}

\begin{example}
\label{rotationalgebras}
  Consider the commutative $C^*$-algebra $C(\T)$ with generator $u: \T \to \C$ given by $u(z)=z$. Fix $\theta \in [0,1]$ and define the action $\alpha:\Z \acts C(\T)$ by the automorphism
  \begin{align*}
    \alpha : u \mapsto e^{2 \pi i \theta} u\;.
  \end{align*}
  Define $\calS_\T := \Span\{1, u ,u^*\}$. Observe that $\alpha$ restricts to an action on $\calS_\T$. Set $\frakg = \{1,0,-1\} \subset \Z$. The crossed product $\calS_\T \rtimes_{\alpha,\lambda} \frakg$ has $C^{\ast}$-envelope $C(\T) \rtimes_\alpha \Z$. Therefore, all rotation algebras are $C^{\ast}$-envelopes of finite-dimesional operator systems.
\end{example}

\begin{example}
  We consider a generalization of Example \ref{rotationalgebras}. Let $n \geq 1$, and let $U(n)$ act on the Cuntz algebra $\calO_n$ via the mapping
  \begin{align*}
    \alpha_g : s_i \mapsto \sum_{j=1}^n g_{ji} s_j\;
  \end{align*}
  where $s_1,\ldots, s_n$ are the isometries generating $\calO_n$ and $g=(g_{ij})$ is the matrix representation of the element $g$ of $U(n)$ with respect to the canonical basis. For a subgroup $G$ of $U(n)$, we say that an action $G \acts \calO_n$ is quasi-free if $G$ acts by $\alpha$ (see \cite{quasifree}). Let $\frakg \subset U(n)$ be a finite symmetric subset containing the identity. Let $G = \Angle{\frakg}$. Set $\calS_n = \Span\{ s_1,\ldots, s_n , 1, s_1^*,\ldots, s_n^*\}$. If $G \acts_\alpha \calO_n$ is a quasi-free action, then $G$ restricts to an action on $\calS_n$. The system $\calS_n \rtimes_{\alpha,\lambda} \frakg$ has $C^{\ast}$-envelope $\calO_n \rtimes_{\alpha,\lambda} G$.
\end{example}

\subsection{An abstract characterization of reduced crossed products}
We now move towards showing that the reduced crossed product does not depend on the choice of $C^*$-cover for the operator system. Recall the following characterization of positivity for reduced $C^*$-algebraic crossed products (see \cite[Corollary 4.1.6]{BrownAndOzawa}):
\begin{proposition}
  Suppose that $(\calC,G,\alpha)$ is a $C^{\ast}$-dynamical system. An element $x = \sum_{g \in G} a_g \lambda_g \in C_c(G,\calC)$ is positive if and only if for any finite set $\{g_1,\ldots, g_n\} \subset G$, the matrix
  \begin{align*}
    \left[\begin{array}{ccc} & & \\ & \alpha_{g_i}^{-1}(a_{g_ig_j^{-1}}) & \\ & & \end{array} \right]_{i,j=1}^n
  \end{align*}
  is positive in $M_n(\calC)$.
\end{proposition}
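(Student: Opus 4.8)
The plan is to realize the reduced crossed product concretely on $\ell^2(G,\calH)$ and read off positivity from finite compressions. Fix a faithful representation $\pi:\calC \hookrightarrow B(\calH)$, so that $\calC \rtimes_{\alpha,\lambda} G$ is, by definition, the $C^*$-algebra generated by the integrated form of the covariant pair $(\tilde\pi,\lambda)$ acting on $\calH \otimes \ell^2(G) \cong \ell^2(G,\calH)$, where $(\tilde\pi(a)\xi)(h) = \pi(\alpha_h^{-1}(a))\xi(h)$ and $(\lambda_g\xi)(h) = \xi(g^{-1}h)$. Since $\pi$ is faithful, positivity of $x$ in $\calC\rtimes_{\alpha,\lambda}G$ is equivalent to positivity of the operator $T := (\tilde\pi\rtimes\lambda)(x) = \sum_g \tilde\pi(a_g)\lambda_g$ on $\ell^2(G,\calH)$.

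First I would compute the $B(\calH)$-valued matrix coefficients of $T$, writing it in the form $(T\xi)(h) = \sum_k T_{h,k}\,\xi(k)$. Plugging in $x = \sum_g a_g\lambda_g$ gives $(T\xi)(h) = \sum_g \pi(\alpha_h^{-1}(a_g))\,\xi(g^{-1}h)$, and the substitution $k = g^{-1}h$, i.e. $g = hk^{-1}$, yields $T_{h,k} = \pi(\alpha_h^{-1}(a_{hk^{-1}}))$. This is the key computation and the only place where the conventions for the covariant representation intervene: the twist by $\alpha_h^{-1}$ is precisely what produces the $\alpha_{g_i}^{-1}$ appearing in the statement.

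Next I would invoke the standard fact that a bounded operator $T$ on $\ell^2(G,\calH)$ is positive if and only if $\langle T\xi,\xi\rangle \geq 0$ for every finitely supported $\xi$, since such vectors are dense. For $\xi$ supported on a finite set $F = \{g_1,\dots,g_n\}$, one has $\langle T\xi,\xi\rangle = \sum_{i,j}\langle T_{g_i,g_j}\,\xi(g_j),\xi(g_i)\rangle$, which is exactly the sesquilinear form of the compression $P_F T P_F$, identified with the matrix $\bigl[\pi(\alpha_{g_i}^{-1}(a_{g_i g_j^{-1}}))\bigr]_{i,j=1}^n \in M_n(B(\calH))$ acting on $(\xi(g_1),\dots,\xi(g_n)) \in \calH^n$. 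Hence $T\geq 0$ if and only if all these finite matrices are positive in $M_n(B(\calH))$; and since $\pi$ is a faithful $*$-homomorphism, $M_n(\pi)$ both preserves and reflects positivity, so this is in turn equivalent to positivity of $[\alpha_{g_i}^{-1}(a_{g_i g_j^{-1}})]_{i,j}$ in $M_n(\calC)$. This delivers both implications simultaneously.

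I expect the only genuine obstacle to be bookkeeping: fixing a convention for $(\tilde\pi,\lambda)$ so that the index substitution $g = hk^{-1}$ comes out correctly, and confirming that the resulting matrix matches the one in the statement rather than its transpose or an $\alpha$-shifted variant (the latter amounting merely to a re-indexing of $F$, since an arbitrary finite set is allowed). Everything else is routine, so I would present the matrix-coefficient computation carefully and keep the density-plus-compression argument brief.
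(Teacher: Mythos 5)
Your argument is correct: the matrix-coefficient computation $T_{h,k}=\pi(\alpha_h^{-1}(a_{hk^{-1}}))$ is right, and the reduction of positivity of $T$ to positivity of all finite compressions (via density of finitely supported vectors) together with the fact that a faithful $*$-homomorphism reflects positivity gives both directions. The paper does not prove this proposition itself but cites Brown--Ozawa for it, and your proof is precisely the standard argument behind that citation, so there is nothing to contrast.
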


The following is a well known result. For a proof, see \cite[Lemma 7.16]{danawilliams}.
\begin{proposition}
\label{proposition: matrix algebras over cstar crossed product}
  Let $(\calC,G,\alpha)$ be a $C^{\ast}$-dynamical system and let $n \geq 1$. We have the isomorphism
  \begin{align*}
    M_n (\calC \rtimes_{\alpha,\lambda} G ) \simeq M_n(\calC) \rtimes_{\alpha^{(n)},\lambda} G\;.
  \end{align*}
\end{proposition}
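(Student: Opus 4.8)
The plan is to exhibit an explicit $*$-isomorphism by rearranging the ``matrix entries'' and the ``group coefficients'' of a finitely supported sum, and then to verify that this rearrangement is isometric by realizing both reduced crossed products concretely on a common Hilbert space. Both sides carry a canonical dense $*$-subalgebra: on the left, $M_n(C_c(G,\calC))$ sits densely in $M_n(\calC \rtimes_{\alpha,\lambda} G)$, while on the right $C_c(G, M_n(\calC))$ sits densely in $M_n(\calC) \rtimes_{\alpha^{(n)},\lambda} G$. A matrix $(x_{ij})$ with entries $x_{ij} = \sum_{g} a^{(g)}_{ij} \lambda_g \in C_c(G,\calC)$ corresponds bijectively to the finitely supported $M_n(\calC)$-valued function $g \mapsto (a^{(g)}_{ij})_{i,j}$. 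I would let $\Phi$ denote this linear rearrangement bijection. Its compatibility with the two (twisted) convolution products and the two involutions can be checked directly, the essential point being that $\alpha^{(n)}$ is defined entrywise, so the twisted convolution on $M_n(\calC)$ agrees entry-by-entry with the matrix product of twisted convolutions on $\calC$.

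Second, I would establish that $\Phi$ is isometric for the reduced norms by putting both algebras on the same Hilbert space. Fix a faithful representation $\pi : \calC \to B(\calH)$ and form the regular representation on $\calH \otimes \ell^2(G)$, in which $\lambda_g$ acts by left translation and $a \in \calC$ acts fiberwise via $\pi(\alpha_h^{-1}(a))$ on the fiber over $h$. Then $M_n(\calC \rtimes_{\alpha,\lambda} G)$ is faithfully represented on $\C^n \otimes \calH \otimes \ell^2(G)$. On the other hand, $\pi^{(n)} : M_n(\calC) \to B(\C^n \otimes \calH)$ is faithful, and the regular representation of $M_n(\calC) \rtimes_{\alpha^{(n)},\lambda} G$ lives on $(\C^n \otimes \calH) \otimes \ell^2(G)$, which is the same Hilbert space after the obvious reassociation.

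Third, I would carry out the one genuine computation: applying both representations to a generator $\sum_g A_g \lambda_g$, with $A_g = (a^{(g)}_{ij}) \in M_n(\calC)$, and checking that the resulting operators on $\C^n \otimes \calH \otimes \ell^2(G)$ agree. Tracking the action on a vector $(\eta_j)_j$ with $\eta_j \in \calH \otimes \ell^2(G)$, both sides send the $i$-th component at $h \in G$ to $\sum_{g,j} \pi(\alpha_h^{-1}(a^{(g)}_{ij}))\,\eta_j(g^{-1}h)$, where the matrix twist unwinds entrywise precisely because $\alpha^{(n)}$ acts entrywise. Since the two representations of the algebraic crossed product produce identical operators matched up by $\Phi$, their reduced norms coincide, and indeed $\Phi$ is automatically $*$-multiplicative as the restriction of the identity map on operators. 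Hence $\Phi$ extends to an isometric $*$-isomorphism of the completions, giving the desired identification. The main obstacle is purely notational: keeping straight the twist $\alpha_h^{-1}$ in the regular representation together with the matrix and group indices, and confirming that the reassociation of $\C^n \otimes \calH \otimes \ell^2(G)$ matches the entrywise definition of $\alpha^{(n)}$. Once the bookkeeping is set up correctly, every step is a direct verification.
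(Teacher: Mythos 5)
Your proof is correct. The paper itself does not supply an argument for this proposition: it records it as "well known" and cites \cite[Lemma 7.16]{danawilliams}, where the statement is obtained as a special case of the general fact that tensoring with a $C^*$-algebra carrying the trivial action commutes with forming reduced crossed products, i.e.\ $(\calC \otimes M_n) \rtimes_{\alpha \otimes \id, \lambda} G \simeq (\calC \rtimes_{\alpha,\lambda} G) \otimes M_n$, combined with the identification $\alpha^{(n)} = \alpha \otimes \id_{M_n}$. Your route is a self-contained, direct verification: you identify the dense $*$-subalgebras $M_n(C_c(G,\calC))$ and $C_c(G,M_n(\calC))$ via the obvious rearrangement, realize both reduced crossed products through their regular representations on $\C^n \otimes \calH \otimes \ell^2(G)$, and check that the two representations produce the same operator on generators, so the reduced norms must agree; the key point that the twist $\alpha_h^{-1}$ unwinds entrywise because $\alpha^{(n)}$ is defined entrywise is exactly right, and faithfulness of the regular representations is built into the definition of the reduced crossed product. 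What the citation-based approach buys is brevity and a more conceptual statement (stability of reduced crossed products under external tensor products with trivially-acted algebras); what your computation buys is transparency and independence from that more general machinery. Both are sound.
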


The next corollary immediately follows from Proposition \ref{proposition: matrix algebras over cstar crossed product}.

\begin{corollary}
  Let $(\calS,G,\frakg,\alpha)$ be a dynamical system, and let $(A,\rho)$ be an $\alpha$-admissible $C^{\ast}$-cover. For $n \geq1$, we have a complete order isomorphism
  \begin{align*}
    M_n(\calS \rtimes_{\alpha,\lambda}^{(\calC,\rho)} \frakg) \simeq M_n(\calS) \rtimes_{\alpha^{(n)}, \lambda}^{(M_n(\calC),\rho^{(n)})} \frakg\;.
  \end{align*}
\end{corollary}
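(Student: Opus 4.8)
The plan is to recognize that both sides of the claimed isomorphism are concrete operator subsystems of reduced $C^*$-crossed products, and that the $*$-isomorphism supplied by Proposition~\ref{proposition: matrix algebras over cstar crossed product} carries one onto the other. Write $\calT := \calS \rtimes_{\alpha,\lambda}^{(\calC,\rho)} \frakg = \Span\{a\lambda_g : a \in \calS,\, g \in \frakg\} \subseteq \calC \rtimes_{\overline{\alpha},\lambda} G$. As an operator system, $M_n(\calT)$ carries the matrix order $M_m(M_n(\calT)) = M_{mn}(\calT)$ inherited from $M_{mn}(\calC \rtimes_{\overline{\alpha},\lambda} G)$. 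On the other side, I would first check that $(M_n(\calC), \rho^{(n)}, \overline{\alpha}^{(n)})$ is an $\alpha^{(n)}$-admissible $C^*$-cover of $(M_n(\calS), G, \frakg, \alpha^{(n)})$: the map $\rho^{(n)}$ is a unital complete order embedding, $\rho^{(n)}(M_n(\calS))$ generates $M_n(\calC)$ (since $\rho(\calS)$ generates $\calC$ and $M_n(\calS)$ contains the scalar matrix units), and the admissibility square for $(\calC,\rho,\overline{\alpha})$ amplifies entrywise to the one for $\overline{\alpha}^{(n)}$. This is routine bookkeeping.

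Next I would invoke Proposition~\ref{proposition: matrix algebras over cstar crossed product} to obtain the canonical $*$-isomorphism
\begin{align*}
  \Theta: M_n(\calC \rtimes_{\overline{\alpha},\lambda} G) \to M_n(\calC) \rtimes_{\overline{\alpha}^{(n)},\lambda} G,
\end{align*}
which identifies $M_n(C_c(G,\calC))$ with $C_c(G, M_n(\calC))$ entrywise. Concretely, if $X = [x_{ij}] \in M_n(\calT)$ has entries $x_{ij} = \sum_{g \in \frakg} a_{ij}^g \lambda_g$ with $a_{ij}^g \in \calS$, then $\Theta(X) = \sum_{g \in \frakg} A^g \lambda_g$ with $A^g = [a_{ij}^g] \in M_n(\calS)$. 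Reading this formula in both directions shows immediately that $\Theta$ restricts to a unital, self-adjoint linear \emph{bijection} of $M_n(\calT)$ onto $\Span\{A\lambda_g : A \in M_n(\calS),\, g \in \frakg\} = M_n(\calS) \rtimes_{\alpha^{(n)},\lambda}^{(M_n(\calC),\rho^{(n)})} \frakg$.

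Finally, to promote this bijection to a complete order isomorphism I would use that $\Theta$ is a $*$-isomorphism of $C^*$-algebras, hence completely positive with completely positive inverse, so each amplification $\Theta^{(m)}$ is an order isomorphism of $M_m(M_n(\calC \rtimes_{\overline{\alpha},\lambda} G))$ onto $M_m(M_n(\calC) \rtimes_{\overline{\alpha}^{(n)},\lambda} G)$ carrying positive cones bijectively onto positive cones. Since the operator system structures on both $M_n(\calT)$ and the target are precisely the ones inherited from these ambient $C^*$-algebras, restricting $\Theta^{(m)}$ matches the positive cones at every level $m$, which is exactly a complete order isomorphism. The only point requiring care—and the one I would flag as the main (albeit mild) obstacle—is the matrix-level bookkeeping: one must confirm that the inherited operator system matrix order on $M_n(\calT)$ at level $m$, namely $M_{mn}(\calT)_+$, is carried by $\Theta^{(m)}$ onto the inherited cone on the right-hand side, rather than merely matching positivity at the ground level; this is what forces the use of the full $*$-isomorphism rather than an order isomorphism at a single level.
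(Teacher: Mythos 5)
Your proposal is correct and follows the same route the paper intends: the paper simply asserts that the corollary ``immediately follows'' from Proposition~\ref{proposition: matrix algebras over cstar crossed product}, and your argument is exactly the expansion of that assertion — restrict the canonical $*$-isomorphism $M_n(\calC \rtimes_{\overline{\alpha},\lambda} G) \simeq M_n(\calC)\rtimes_{\overline{\alpha}^{(n)},\lambda} G$ to the spanning subsystems and use that a $*$-isomorphism is a complete order isomorphism on any operator subsystem it maps bijectively. Your additional checks (admissibility of the amplified cover and the identification $M_m(M_n(\calT)) = M_{mn}(\calT)$) are exactly the routine points the paper leaves implicit.
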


Therefore, we have the following characterization of reduced crossed products.
\begin{proposition}\label{prop: abstract char of pos}
  Let $(\calS,G,\frakg,\alpha)$ be a dynamical system and let $(\calC,\rho)$ be an $\alpha$-admissible $C^{\ast}$-cover. For $n \geq1$, the positive cones $C_n := M_n(\calS \rtimes_{\alpha,\lambda}^{(\calC,\rho)} \frakg)^+$ are given by the following rule: an element $x = \sum_{g \in \frakg} x_g \lambda_g \in M_n(\calS \rtimes_{\alpha,\lambda}^{(\calC,\rho)} \frakg)$ is in $C_n$ if and only if, for every finite subset $F$ of $G$, the matrix
  \begin{align*}
    \left[\begin{array}{ccc} & & \\ & \alpha^{(n)}_{g^{-1}}(\rho(x_{gh^{-1}})) & \\ & & \end{array} \right]_{g,h \in F}
  \end{align*}
  is positive in $M_{F}(M_n(\rho(\calS)))$.
\end{proposition}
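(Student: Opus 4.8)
The plan is to reduce the statement to the known characterization of positivity in reduced $C^*$-algebraic crossed products (the first Proposition in the excerpt) by passing through matrix levels. First I would fix $n \geq 1$ and an element $x = \sum_{g \in \frakg} x_g \lambda_g \in M_n(\calS \rtimes_{\alpha,\lambda}^{(\calC,\rho)} \frakg)$. The operator system $\calS \rtimes_{\alpha,\lambda}^{(\calC,\rho)} \frakg$ is by definition an operator subsystem of the reduced crossed product $C^*$-algebra $\calC \rtimes_{\cl\alpha,\lambda} G$, so its positive cone at each matrix level is inherited from the ambient $C^*$-algebra. Concretely, $x \in C_n$ if and only if $x$ is positive as an element of $M_n(\calC \rtimes_{\cl\alpha,\lambda} G)$.

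The key step is to transport this ambient positivity into a checkable matricial condition. Using Proposition \ref{proposition: matrix algebras over cstar crossed product}, I would identify $M_n(\calC \rtimes_{\cl\alpha,\lambda} G)$ with $M_n(\calC) \rtimes_{\cl\alpha^{(n)},\lambda} G$, under which $x = \sum_{g \in \frakg} x_g \lambda_g$ (with $x_g \in M_n(\rho(\calS)) \subseteq M_n(\calC)$) is carried to the corresponding element $\sum_{g} \rho^{(n)}(x_g)\lambda_g$ of $C_c(G, M_n(\calC))$. At this point the first Proposition applies directly to the $C^*$-dynamical system $(M_n(\calC), G, \cl\alpha^{(n)})$: the element is positive precisely when, for every finite subset $F \subseteq G$, the matrix $\left[\cl\alpha^{(n)}_{g^{-1}}\bigl(\rho(x_{gh^{-1}})\bigr)\right]_{g,h \in F}$ is positive in $M_F(M_n(\calC))$. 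Since $\rho^{(n)}$ is a complete order embedding of $M_n(\rho(\calS))$ into $M_n(\calC)$ and $\cl\alpha^{(n)}$ restricts to $\alpha^{(n)}$ on $\rho(\calS)$ by admissibility, positivity of this matrix is equivalent to positivity of $\left[\alpha^{(n)}_{g^{-1}}(\rho(x_{gh^{-1}}))\right]_{g,h \in F}$ in $M_F(M_n(\rho(\calS)))$, which is the stated condition.

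I would then remark on two bookkeeping points that make the equivalence clean. First, the indices $gh^{-1}$ appearing in the matrix range over products of elements of $F$, and one must observe that $x_{gh^{-1}}$ is interpreted as zero whenever $gh^{-1} \notin \frakg$; this is consistent because $x$ is supported on $\frakg$. Second, one should note the harmless discrepancy in notation between $\alpha_{g_i}^{-1}(a_{g_ig_j^{-1}})$ in the first Proposition and $\alpha_{g^{-1}}(\rho(x_{gh^{-1}}))$ here — these agree since $\cl\alpha$ is a genuine group action, so $\cl\alpha_{g^{-1}} = (\cl\alpha_g)^{-1}$.

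The main obstacle, such as it is, will be the verification that the identification of $M_n(\calC \rtimes_{\cl\alpha,\lambda} G)$ with $M_n(\calC)\rtimes_{\cl\alpha^{(n)},\lambda} G$ sends the operator-system-level positive cone $C_n$ to exactly the cone described by the first Proposition, rather than merely being a linear isomorphism that preserves positivity in one direction. This is where injectivity of the complete order embedding $\rho^{(n)}$ is essential: it guarantees that no positivity is lost or gained when restricting from $M_n(\calC)$ down to $M_n(\rho(\calS))$. Once this is in place, the proof is essentially a concatenation of the preceding Corollary and the $C^*$-algebraic characterization, with no further analytic input required.
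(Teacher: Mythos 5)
Your argument is correct and is precisely the route the paper intends: the paper gives no explicit proof, presenting the proposition as an immediate consequence of the $C^*$-algebraic positivity characterization together with the identification $M_n(\calC \rtimes_{\cl{\alpha},\lambda} G) \simeq M_n(\calC) \rtimes_{\cl{\alpha}^{(n)},\lambda} G$, which is exactly the concatenation you carry out. Your added bookkeeping (entries $\alpha^{(n)}_{g^{-1}}(\rho(x_{gh^{-1}}))$ landing in $M_n(\rho(\calS))$ by admissibility, vanishing of coefficients off $\frakg$, and the cone of the subsystem being inherited from the ambient $C^*$-algebra) is a faithful filling-in of the details the paper leaves implicit.
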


We now prove that the reduced crossed product is independent of the $C^{\ast}$-cover. The proof is essentially the same as the proof of the analogous result for the operator algebras \cite[Lemma 3.11]{katsoulisramsey}.

\begin{lemma}
\label{lemma: automorphism that preserves embedding preserves Shilov ideal}
  Suppose that $\rho: \calS \hookrightarrow \calC$ is a complete order embedding of an operator system $\calS$ into a $C^{\ast}$-cover $\calC$. Let $\calJ_\calS$ be the Shilov ideal of $\calS$ in $\calC$. If $\alpha: \calC \to \calC$ is an automorphism such that $\alpha(\rho(\calS))=\rho(\calS)$, then $\alpha(\calJ_\calS)=\calJ_{\calS}$.
\end{lemma}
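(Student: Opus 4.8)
The plan is to exploit the fact, recorded in the preliminaries, that the Shilov ideal $\calJ_\calS$ is characterized as the \emph{maximal} boundary ideal for $\calS$ in $\calC$. Thus it suffices to show that $\alpha(\calJ_\calS)$ is again a boundary ideal: maximality will then force $\alpha(\calJ_\calS) \subseteq \calJ_\calS$, and applying the same reasoning to the automorphism $\alpha^{-1}$ (which also satisfies $\alpha^{-1}(\rho(\calS))=\rho(\calS)$) will give $\calJ_\calS \subseteq \alpha(\calJ_\calS)$, hence equality. Since $\alpha$ is a $*$-automorphism of $\calC$, the image $\alpha(\calJ_\calS)$ is automatically a (closed, two-sided) ideal, so the only real content is the boundary-ideal property.

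First I would set up the induced map on quotients. Because $\alpha$ carries $\calJ_\calS$ bijectively onto $\alpha(\calJ_\calS)$, it descends to a $*$-isomorphism $\overline{\alpha}: \calC/\calJ_\calS \to \calC/\alpha(\calJ_\calS)$ satisfying $\overline{\alpha} \circ q = q' \circ \alpha$, where $q:\calC \to \calC/\calJ_\calS$ and $q':\calC \to \calC/\alpha(\calJ_\calS)$ are the canonical quotient maps. Rearranging gives the factorization $q' = \overline{\alpha}\circ q \circ \alpha^{-1}$, and restricting to $\rho(\calS)$ (which $\alpha^{-1}$ preserves) yields
\begin{align*}
q'|_{\rho(\calS)} = \overline{\alpha}\circ \big(q|_{\rho(\calS)}\big)\circ \big(\alpha^{-1}|_{\rho(\calS)}\big).
\end{align*}

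Now I would verify that each of the three factors is order-theoretically well behaved. The map $\alpha^{-1}|_{\rho(\calS)}:\rho(\calS) \to \rho(\calS)$ is a complete order isomorphism, being the restriction of the $*$-automorphism $\alpha^{-1}$ to an invariant operator subsystem; the map $q|_{\rho(\calS)}$ is a complete order embedding precisely because $\calJ_\calS$ is a boundary ideal; and $\overline{\alpha}$ is a $*$-isomorphism of $C^{\ast}$-algebras, hence a complete order isomorphism. A composition of a complete order isomorphism, a complete order embedding, and a complete order isomorphism is again a complete order embedding, so $q'|_{\rho(\calS)}$ is a complete order embedding. This shows $\alpha(\calJ_\calS)$ is a boundary ideal, completing the argument via maximality as described above. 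I expect the only point requiring care to be the clean construction of $\overline{\alpha}$ and the observation that $\alpha$ restricts to a genuine automorphism of the operator system $\rho(\calS)$ — both of which are routine given the hypothesis $\alpha(\rho(\calS))=\rho(\calS)$ — so there is no serious obstacle; the result is essentially a formal consequence of the maximality characterization of the Shilov ideal.
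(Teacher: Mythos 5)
Your proposal is correct and follows essentially the same route as the paper: show $\alpha(\calJ_\calS)$ is a boundary ideal, invoke maximality of the Shilov ideal to get $\alpha(\calJ_\calS) \subseteq \calJ_\calS$, and apply the same argument to $\alpha^{-1}$ for the reverse inclusion. The only cosmetic difference is that the paper verifies the boundary-ideal property by a direct norm computation showing $x \mapsto \rho(x)+\alpha(\calJ_\calS)$ is a complete isometry, whereas you package the same transport-by-$\alpha$ step as a factorization $q'|_{\rho(\calS)} = \overline{\alpha}\circ (q|_{\rho(\calS)})\circ (\alpha^{-1}|_{\rho(\calS)})$ through the induced isomorphism of quotients; both are valid and equivalent in content.
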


\begin{proof}
  Let $n \geq 1$ and $x \in M_n(\calS)$. A calculation shows that
  \begin{align*}
    \|\rho(x) + M_n(\alpha(\calJ_\calS)) \| = \|(\alpha^{-1})^{(n)}(\rho(x)) + M_n(\calJ_\calS)\| = \|(\alpha^{-1})^{(n)}(\rho(x))\| = \|x\|\;,
  \end{align*}
  by definition of $\calJ_{\calS}$.  Therefore, $\calS \to \calC/\alpha(\calJ_\calS) : x \mapsto \rho(x) + \alpha(\calJ_\calS)$ is a complete order isometry. Thus, $\alpha(\calJ_{\calS})$ is a boundary ideal for $\calS$ in $(\calC,\rho)$. Since the Shilov ideal is maximal amongst boundary ideals, $\alpha(\calJ_\calS) \subset \calJ_\calS$. Since $\alpha$ is an automorphism, applying the same argument for $\alpha^{-1}$ shows that $\calJ_{\calS} \subseteq \alpha(\calJ_{\calS})$. Thus, $\alpha(\calJ_\calS) = \calJ_\calS$.
\end{proof}

\begin{lemma}\label{lemma: alpha-admissible passes to quotients}
  Suppose that $(\calS,G,\frakg,\alpha)$ is a dynamical system and suppose that $(\calC,\rho,\alpha)$ is an $\alpha$-admissible $C^{\ast}$-cover. Then the $G$-action $\alpha$ induces a $G$-action on $\calC/\calJ_\calS$ via $g \mapsto \dot{\alpha}_g$, where $\dot{\alpha}_g(x+\calJ_{\calS})=\alpha_g(x)+\calJ_{\calS}$. Moreover, if $q:\calC \to \calC/\calJ_{\calS}$ is the canonical quotient map, then $(\calC/\calJ_\calS,q\circ \rho,\dot{\alpha})$ is an $\alpha$-admissible $C^{\ast}$-cover for $(\calS,G,\frakg,\alpha)$.
\end{lemma}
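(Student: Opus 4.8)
The plan is to reduce everything to Lemma~\ref{lemma: automorphism that preserves embedding preserves Shilov ideal}, which is the only nontrivial ingredient; the remaining assertions are formal consequences of passing to a quotient by a $*$-invariant ideal. First I would unpack admissibility: by hypothesis there is a group action $G \to \Aut(\calC)$, still written $\alpha$, by $*$-automorphisms satisfying $\alpha_g \circ \rho = \rho \circ \alpha_g$ for all $g \in G$. Since each $\alpha_g$ restricts to a unital complete order isomorphism of $\calS$ onto itself, we get $\alpha_g(\rho(\calS)) = \rho(\alpha_g(\calS)) = \rho(\calS)$. Thus each $\alpha_g$ is a $*$-automorphism of $\calC$ carrying $\rho(\calS)$ onto itself, and Lemma~\ref{lemma: automorphism that preserves embedding preserves Shilov ideal} yields $\alpha_g(\calJ_\calS) = \calJ_\calS$ for every $g$. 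I expect this invariance to be the main point; everything below follows mechanically from it.

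Granting $\alpha_g(\calJ_\calS) = \calJ_\calS$, well-definedness of $\dot{\alpha}_g$ is immediate, since $x - y \in \calJ_\calS$ forces $\alpha_g(x) - \alpha_g(y) = \alpha_g(x-y) \in \calJ_\calS$. Each $\dot{\alpha}_g$ is then the $*$-homomorphism induced on $\calC/\calJ_\calS$ by $\alpha_g$, with two-sided inverse $\dot{\alpha}_{g^{-1}}$, so $\dot{\alpha}_g \in \Aut(\calC/\calJ_\calS)$. That $g \mapsto \dot{\alpha}_g$ is a homomorphism with $\dot{\alpha}_e = \id$ follows at once from the corresponding facts for $\alpha$, since $\dot{\alpha}_{gh}(x + \calJ_\calS) = \alpha_g(\alpha_h(x)) + \calJ_\calS = \dot{\alpha}_g(\dot{\alpha}_h(x+\calJ_\calS))$.

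To conclude that $(\calC/\calJ_\calS, q\circ\rho, \dot{\alpha})$ is an $\alpha$-admissible $C^*$-cover, I would check the three defining properties. The map $q\circ\rho$ is a complete order embedding precisely because $\calJ_\calS$ is a boundary ideal, which is part of the definition of the Shilov ideal. Since $\rho(\calS)$ generates $\calC$ and $q$ is a surjective $*$-homomorphism, $(q\circ\rho)(\calS)$ generates $\calC/\calJ_\calS$, so $(\calC/\calJ_\calS, q\circ\rho)$ is a $C^*$-cover of $\calS$. Finally, admissibility amounts to commutativity of the defining square, which I would verify by the direct computation
\begin{align*}
  \dot{\alpha}_g\big((q\circ\rho)(s)\big) = \alpha_g(\rho(s)) + \calJ_\calS = \rho(\alpha_g(s)) + \calJ_\calS = (q\circ\rho)(\alpha_g(s))
\end{align*}
for $s \in \calS$ and $g \in G$, where the middle equality uses admissibility of the original cover $(\calC,\rho,\alpha)$. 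Hence $\dot{\alpha}_g \circ (q\circ\rho) = (q\circ\rho)\circ\alpha_g$, completing the verification.
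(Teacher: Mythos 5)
Your proof is correct and follows essentially the same route as the paper's: both invoke Lemma~\ref{lemma: automorphism that preserves embedding preserves Shilov ideal} to get $\alpha_g(\calJ_\calS)=\calJ_\calS$, from which well-definedness of $\dot{\alpha}_g$, the group action property, and the commuting square all follow formally. Your version is slightly more explicit than the paper's (e.g., in noting that $q\circ\rho$ is a complete order embedding because $\calJ_\calS$ is a boundary ideal, and that the image generates the quotient), but there is no substantive difference in approach.
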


\begin{proof}
  Let $g \in G$. Since $(\calC,\rho,\alpha)$ is an $\alpha$-admissible $C^*$-cover, $\alpha_g(\rho(\calS))=\rho(\calS)$. By Lemma \ref{lemma: automorphism that preserves embedding preserves Shilov ideal}, $\alpha_g(\calJ_\calS) = \calJ_\calS$. Hence, the map $\dot{\alpha}_g$ as defined above is a well-defined unital $*$-homomorphism. It is easy to check that the assignment $g \mapsto \dot{\alpha}_g$ induces a group action $\dot{\alpha}$ of $G$ on $\calC/\calJ_\calS$.

  To see that $(\calC/\calJ_\calS,q\circ \rho,\dot{\alpha})$ is an $\alpha$-admissible $C^{\ast}$-cover for $(\calS,G,\frakg,\alpha)$, let $x \in \calS$ and let $g \in G$. We have the identity
  \begin{align*}
    \dot{\alpha}_g(q\circ\rho(x)) &= \dot{\alpha}_g(\rho(x) + \calJ_\calS) = (\alpha_g \circ \rho)(x) + \calJ_\calS \\
    &= \rho(\alpha_g(x)) + \calJ_\calS = q \circ \rho (\alpha_g(x)),
  \end{align*}
  thus proving that $(\calC/\calJ_\calS,q\circ \rho,\dot{\alpha})$ is $\alpha$-admissible.
\end{proof}

\begin{theorem}
  Suppose that $(\calS,G,\frakg,\alpha)$ is a dynamical system and suppose that $(\calC,\rho,\alpha)$ is an $\alpha$-admissible $C^{\ast}$-cover. If $\calJ_\calS$ is the Shilov boundary of $\calS$ in $\calC$, then
  \begin{align*}
    \calS \rtimes_{\alpha,\lambda}^{(\calC/\calJ_\calS,q \circ \rho)} \frakg \simeq \calS \rtimes_{\alpha,\lambda}^{(\calC,\rho)} \frakg
  \end{align*}
  canonically. In particular, the reduced crossed product does not depend on the choice of $C^*$-cover.
\end{theorem}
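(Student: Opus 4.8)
The plan is to show that the map which is the identity on Fourier coefficients implements the desired isomorphism. Concretely, I would define
\[
\Psi:\calS \rtimes_{\alpha,\lambda}^{(\calC,\rho)} \frakg \to \calS \rtimes_{\alpha,\lambda}^{(\calC/\calJ_\calS,\,q \circ \rho)} \frakg, \qquad \sum_{g \in \frakg} \rho(x_g)\lambda_g \mapsto \sum_{g \in \frakg} (q\circ\rho)(x_g)\lambda_g .
\]
Since $\rho$ and $q\circ\rho$ are injective and the Fourier coefficients of an element of a reduced crossed product are uniquely determined (the embedding $C_c(G,\calC)\hookrightarrow \calC\rtimes_{\alpha,\lambda}G$ on $\calH\otimes\ell^2(G)$ is faithful), the map $\Psi$ is a well-defined unital linear bijection. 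The whole problem then reduces to checking that $\Psi$ preserves and reflects positivity at every matrix level, which is exactly the assertion that $\Psi$ is a complete order isomorphism.

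To do this I would apply the abstract positivity characterization of Proposition \ref{prop: abstract char of pos} to both sides simultaneously. Fix $n\geq 1$, an element $x=\sum_{g\in\frakg}x_g\lambda_g\in M_n(\calS\rtimes^{(\calC,\rho)}_{\alpha,\lambda}\frakg)$ with $x_g\in M_n(\calS)$, and a finite set $F\subseteq G$. Using admissibility of $(\calC,\rho,\alpha)$, which gives $\alpha_g\circ\rho=\rho\circ\alpha_g$, together with the relation $\dot{\alpha}_g\circ q=q\circ\alpha_g$ coming from Lemma \ref{lemma: alpha-admissible passes to quotients}, the two test matrices produced by Proposition \ref{prop: abstract char of pos} can both be expressed through a single matrix living over $\calS$,
\[
\widetilde P_F := \bigl[\alpha^{(n)}_{g^{-1}}(x_{gh^{-1}})\bigr]_{g,h\in F}\in M_{|F|n}(\calS),
\]
namely the $\calC$-side test matrix equals $\rho^{(|F|n)}(\widetilde P_F)$ and the $\calC/\calJ_\calS$-side test matrix equals $(q\circ\rho)^{(|F|n)}(\widetilde P_F)$.

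The key step is then immediate: both $\rho$ and $q\circ\rho$ are complete order embeddings of $\calS$ --- the first by hypothesis, and the second precisely because $\calJ_\calS$ is a boundary ideal, so that the quotient map $q$ restricts to a complete order embedding on $\rho(\calS)$. Consequently, for every finite $F$,
\[
\rho^{(|F|n)}(\widetilde P_F)\geq 0 \iff \widetilde P_F\geq 0 \text{ in } M_{|F|n}(\calS) \iff (q\circ\rho)^{(|F|n)}(\widetilde P_F)\geq 0 .
\]
Since positivity of $x$ in either crossed product is, by Proposition \ref{prop: abstract char of pos}, equivalent to positivity of the corresponding test matrix for all finite $F$, the two positivity conditions coincide. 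As this holds at every matrix level $n$, the map $\Psi$ is a complete order isomorphism.

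For the final clause I would specialize to the $C^*$-envelope: by Hamana's theorem there is a canonical $*$-isomorphism $\calC/\calJ_\calS\simeq C^*_{\env}(\calS)$ carrying $q\circ\rho$ to $\iota$, and it is $G$-equivariant (it intertwines $\dot\alpha$ with the extended action $\overline\alpha$, by uniqueness in the universal property), so $\calS\rtimes^{(\calC,\rho)}_{\alpha,\lambda}\frakg\simeq \calS\rtimes^{(C^*_{\env}(\calS),\iota)}_{\alpha,\lambda}\frakg=\calS\rtimes_{\alpha,\lambda}\frakg$ for every admissible cover. The proof is conceptually short once the machinery is in place; the only point requiring care is the bookkeeping that rewrites both test matrices through the common $\widetilde P_F\in M_{|F|n}(\calS)$, after which the conclusion follows entirely from the defining property of the Shilov ideal.
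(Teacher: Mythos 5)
Your proof is correct and follows essentially the same route as the paper: both arguments reduce positivity on each side to positivity of the common test matrix over $\calS$ via Proposition \ref{prop: abstract char of pos}, using that $q\circ\rho$ is a complete order embedding precisely because $\calJ_\calS$ is a boundary ideal. The only cosmetic differences are that the paper obtains the forward direction for free by restricting the canonical $*$-homomorphism $\calC\rtimes_{\alpha,\lambda}G\to(\calC/\calJ_\calS)\rtimes_{\dot{\alpha},\lambda}G$ whereas you run the positivity characterization symmetrically in both directions, and that you explicitly justify the ``in particular'' clause via the $G$-equivariant identification $\calC/\calJ_\calS\simeq C^*_{\env}(\calS)$, which the paper leaves implicit.
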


\begin{proof}
  By Lemma~\ref{lemma: alpha-admissible passes to quotients}, the reduced crossed product $\calS \rtimes_{\alpha,\lambda}^{(\calC/\calJ_\calS,q \circ \rho)} \frakg$ is well-defined. It remains to prove that the map
  \begin{align*}
    \Phi:\calS \rtimes_{\alpha,\lambda}^{(\calC,\rho)} \frakg &\to \calS \rtimes_{\alpha,\lambda}^{(\calC/\calJ_\calS,q \circ \rho)} \frakg \\
    x\lambda_g &\mapsto (x+\calJ_\calS)\lambda_g
  \end{align*}
  is a complete order isomorphism. This map is ucp since it arises as the restriction of a *-homomorphism
  \begin{align*}
  		\calC \rtimes_{\alpha,\lambda} G &\to \calC/\calJ_\calS \rtimes_{\dot{\alpha},\lambda} G \\
  		x\lambda_s &\mapsto (x+\calJ_\calS)\lambda_s\;.
  \end{align*}
  Conversely, for $X \in M_n(\calS \rtimes_{\alpha,\lambda}^{(\calC,\rho)} \frakg)$, suppose that $\Phi(X) \in M_n(\calS \rtimes_{\alpha,\lambda}^{(\calC/\calJ_{\calS},q \circ \rho)} \frakg)$ is positive. By Proposition~\ref{prop: abstract char of pos}, $X$ is positive if and only if for every finite subset $F$ of $G$, the matrix
  \begin{align*}
  	\left[\begin{array}{ccc} & & \\ & \dot{\alpha}_{g^{-1}}^{(n)}(q \circ \rho(X_{gh^{-1}})) & \\ & & \end{array} \right]_{g,h \in F}
  \end{align*}
  is positive in $M_F(M_n(q \circ \rho(\calS)))$. $\dot{\alpha}_g \circ (q \circ \rho)=(q \circ \rho) \circ \alpha_g$ and $q \circ \rho:\calS \to \calC/\calJ_{\calS}$ is a complete order embedding, we see that the matrix
  \begin{align*}
  	\left[\begin{array}{ccc} & & \\ & \alpha_{g^{-1}}^{(n)}(X_{gh^{-1}}) & \\ & & \end{array} \right]_{g,h\in F}
  \end{align*}
  is positive in $M_F(M_n(\calS))$. Since $\alpha_g \circ \rho=\rho \circ \alpha_g$, the matrix
  \begin{align*}
    	\left[\begin{array}{ccc} & & \\ & \alpha_{g^{-1}}^{(n)}(\rho(X_{gh^{-1}})) & \\ & & \end{array} \right]_{g,h\in F}
    \end{align*}
    is positive in $M_F(M_n(\rho(\calS))$. Applying Proposition~\ref{prop: abstract char of pos} again, we see that $X$ is positive in $M_n(\calS \rtimes_{\alpha,\lambda}^{(\calC,\rho)} \frakg)$, establishing the complete order isomorphism.

\end{proof}

We close this section with a short discussion on $G$-equivariant ucp maps. First, we have the following result (\cite[Exercise 4.1.4]{BrownAndOzawa}):
\begin{proposition}\label{prop: covariance under ucp}
  Suppose that $(\calS,G,\frakg,\alpha)$ and $(\calT,G,\frakg,\beta)$ are dynamical systems and suppose that $\phi: \calS \to \calT$ is a $G$-equivariant ucp map. The map
  \begin{align*}
    \widetilde{\phi}: \calS \rtimes_{\alpha,\lambda} \frakg &\to \calT \rtimes_{\beta,\lambda} \frakg : a\lambda_g \mapsto \phi(a) \lambda_g
  \end{align*}
  is ucp. If the map $\phi$ is a complete order embedding, then the map $\tilde{\phi}$ is also a complete order embedding.
\end{proposition}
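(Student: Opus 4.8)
The plan is to reduce everything to the intrinsic positivity criterion for reduced crossed products established in Proposition~\ref{prop: abstract char of pos}, and then to transport positivity across $\widetilde{\phi}$ using complete positivity of $\phi$ together with the $G$-equivariance relation $\beta_g \circ \phi = \phi \circ \alpha_g$. First I would observe that $\widetilde{\phi}$ is well-defined, linear, and unital (it fixes $1\lambda_e$ since $\phi$ is unital), so the entire content of the first assertion is that $\widetilde{\phi}$ is completely positive. Here $\calS \rtimes_{\alpha,\lambda}\frakg$ and $\calT \rtimes_{\beta,\lambda}\frakg$ are taken relative to their canonical covers $(C^*_{\env}(\calS),\iota)$ and $(C^*_{\env}(\calT),\iota_\calT)$, and since these embeddings are complete order embeddings, the matrix positivity in Proposition~\ref{prop: abstract char of pos} may be read off directly in $M_F(M_n(\calS))$ and $M_F(M_n(\calT))$ without reference to the covers.

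For complete positivity, I would fix $n \geq 1$ and a positive element $x = \sum_{g \in \frakg} x_g \lambda_g \in M_n(\calS \rtimes_{\alpha,\lambda}\frakg)_+$, with the convention $x_g = 0$ for $g \notin \frakg$. By Proposition~\ref{prop: abstract char of pos}, for every finite $F \subseteq G$ the matrix $\left[\alpha^{(n)}_{g^{-1}}(x_{gh^{-1}})\right]_{g,h \in F}$ is positive in $M_F(M_n(\calS))$. Applying $\widetilde{\phi}^{(n)}$ yields $\sum_{g \in \frakg} \phi^{(n)}(x_g)\lambda_g$, so I must check positivity of the matrix $\left[\beta^{(n)}_{g^{-1}}(\phi^{(n)}(x_{gh^{-1}}))\right]_{g,h \in F}$. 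The key step is the equivariance identity: applying $\beta_g(\phi(s)) = \phi(\alpha_g(s))$ with $g^{-1}$ and passing to amplifications gives $\beta^{(n)}_{g^{-1}} \circ \phi^{(n)} = \phi^{(n)} \circ \alpha^{(n)}_{g^{-1}}$, so this matrix equals $\left[\phi^{(n)}\!\left(\alpha^{(n)}_{g^{-1}}(x_{gh^{-1}})\right)\right]_{g,h \in F}$. This is precisely the image of the positive matrix $\left[\alpha^{(n)}_{g^{-1}}(x_{gh^{-1}})\right]_{g,h \in F} \in M_F(M_n(\calS))_+$ under the amplification of $\phi$, which is positive because $\phi$ is completely positive. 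By Proposition~\ref{prop: abstract char of pos} again, $\widetilde{\phi}^{(n)}(x)$ is positive in $M_n(\calT \rtimes_{\beta,\lambda}\frakg)$, so $\widetilde{\phi}$ is completely positive.

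For the final claim, I would run the same chain of equivalences in reverse. If $\phi$ is a complete order embedding, then $\phi^{(N)}$ reflects positivity for every $N$; so given $x \in M_n(\calS \rtimes_{\alpha,\lambda}\frakg)$ with $\widetilde{\phi}^{(n)}(x)$ positive, the criterion forces $\left[\phi^{(n)}(\alpha^{(n)}_{g^{-1}}(x_{gh^{-1}}))\right]_{g,h \in F}$ to be positive in $M_F(M_n(\calT))$, and reflecting positivity through $\phi$ returns positivity of $\left[\alpha^{(n)}_{g^{-1}}(x_{gh^{-1}})\right]_{g,h \in F}$ in $M_F(M_n(\calS))$; Proposition~\ref{prop: abstract char of pos} then gives $x \geq 0$. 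Thus $\widetilde{\phi}$ reflects positivity at every matrix level, and since the cone of an operator system is proper, this automatically makes $\widetilde{\phi}$ injective, so it is a complete order embedding onto its range. I do not expect a genuine obstacle here: the whole argument is driven by the order-theoretic characterization of the reduced crossed product, and the only real input is the commutation of $\phi$ past the automorphisms. The one point requiring care is purely bookkeeping—keeping track of the amplifications $\alpha^{(n)}, \beta^{(n)}, \phi^{(n)}$ and the double-indexing by $F$ and by $n$, together with the harmless convention that off-support Fourier coefficients vanish.
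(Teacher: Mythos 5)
Your argument is correct and follows essentially the same route as the paper's proof: both reduce to the positivity criterion of Proposition~\ref{prop: abstract char of pos}, push the finite matrices $\left[\alpha^{(n)}_{g^{-1}}(x_{gh^{-1}})\right]_{g,h\in F}$ through the amplifications of $\phi$, and use $G$-equivariance to rewrite $\phi^{(n)}\circ\alpha^{(n)}_{g^{-1}}$ as $\beta^{(n)}_{g^{-1}}\circ\phi^{(n)}$, with the embedding statement obtained by running the equivalences in reverse. Your added remarks on injectivity via properness of the cone and on the amplification bookkeeping are correct but only make explicit what the paper leaves as ``a similar argument.''
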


\begin{proof}
  It is clear that $\widetilde{\phi}$ is unital. Since the amplifications $\phi^{(n)} : M_n(\calS) \to M_n(\calT)$ are $G$-equivariant, it suffices to show that $\widetilde{\phi}$ is positive. If $x = \sum_{g \in \frakg} x_g \lambda_g$ is positive in $\calS \rtimes_{\alpha,\lambda} \frakg$, then for each finite $F \subset G$ the matrix
  \begin{align*}
    P:= \left[\begin{array}{ccc} & & \\ & \alpha_{g^{-1}}(x_{gh^{-1}}) & \\ & & \end{array} \right]_{g,h \in F}\;.
  \end{align*}
  is positive in $M_{F}(\calS)$. Since $\phi$ is ucp, $\phi^{(F)}(P) \geq 0$ in $\calT$. By $G$-equivariance, this means that the matrix
  \begin{align*}
    \left[\begin{array}{ccc} & & \\ & \beta_{g^{-1}}(\phi(x_{gh^{-1}})) & \\ & & \end{array} \right]_{g,h \in F}
  \end{align*}
  is positive in $M_{F}(\calT)$. This occurs if and only if the element $\sum_{g \in \frakg} \phi(x_g)\lambda_g$ is positive in $\calT \rtimes_{\beta, \lambda} \frakg$. A similar argument shows that $\widetilde{\varphi}$ is a complete order embedding whenever $\varphi$ is a complete order embedding.
\end{proof}

In the case of $C^{\ast}$-algebras, a $G$-equivariant quotient map between two $C^{\ast}$-algebras produces a quotient map on the reduced crossed product. This fails in the case of operator systems. For example, let $\frakz := \{1,0,-1\} \subset \Z$. Let $E_{00},E_{01},E_{10}, E_{11}$ enumerate the canonical system of matrix units for $M_2$. Define
	\begin{align*}
		\phi: M_2 \to \calS(\frakz) : E_{ij} \mapsto \frac{1}{n} u_i u_j^*\;.
	\end{align*}
	It was shown in \cite[Theorem 2.4]{FP} that $\phi$ is a complete quotient map. However, the following holds.

\begin{proposition}\cite[Proposition 3.10]{DiscGroup}
\label{proposition: tensorofquotientmapsisnotaquotient}
	The map $\phi \otimes \phi : M_2 \otimes_{\min} M_2 \to \calS(\frakz) \otimes_{\min} \calS(\frakz)$ is not a complete quotient map.
\end{proposition}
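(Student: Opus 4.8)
The plan is to pass to Banach-space adjoints and reduce the statement to the failure of $(\min,\max)$-nuclearity of $\calS(\frakz)$. Since every operator system in sight is finite-dimensional, I would invoke \cite[Proposition 1.8]{FP}, recalled above: a ucp map between finite-dimensional operator systems is a complete quotient map if and only if its adjoint is a complete order embedding. Applied to $\phi \otimes \phi$, this says that $\phi \otimes \phi$ is a complete quotient map if and only if $(\phi \otimes \phi)^d = \phi^d \otimes \phi^d$ is a complete order embedding. The first step is to compute the domain and codomain of this adjoint using the tensor-product duality for finite-dimensional operator systems from \cite{KPTT}, namely $(\calS \otimes_{\min} \calT)^d \cong \calS^d \otimes_{\max} \calT^d$. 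This gives
\begin{align*}
	\phi^d \otimes \phi^d : \calS(\frakz)^d \otimes_{\max} \calS(\frakz)^d \to M_2^d \otimes_{\max} M_2^d,
\end{align*}
and since $M_2 \cong M_2^d$ is nuclear the codomain is just $M_2 \otimes_{\min} M_2 = M_4$.

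Next I would turn the embedding question into a comparison of the minimal and maximal tensor structures on $\calS(\frakz)^d$. Because $\phi$ is a complete quotient map, $\phi^d : \calS(\frakz)^d \hookrightarrow M_2^d = M_2$ is a complete order embedding, and a one-line computation identifies its image as the equal-diagonal subsystem $\Span\{I_2, E_{12}, E_{21}\} \subseteq M_2$. By injectivity of the minimal tensor product, $\phi^d \otimes \phi^d$ is automatically a complete order embedding when its domain carries the \emph{minimal} structure. Factoring $\phi^d \otimes \phi^d$ through the canonical ucp map $\calS(\frakz)^d \otimes_{\max} \calS(\frakz)^d \to \calS(\frakz)^d \otimes_{\min} \calS(\frakz)^d$ and comparing the positive cones at each matrix level, I would conclude (using that the maximal cone is always contained in the minimal one) that $\phi^d \otimes \phi^d$ is a complete order embedding out of the \emph{maximal} tensor product if and only if the minimal and maximal structures on $\calS(\frakz)^d \otimes \calS(\frakz)^d$ coincide. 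Dualizing one last time, this is equivalent to
\begin{align*}
	\calS(\frakz) \otimes_{\min} \calS(\frakz) \neq \calS(\frakz) \otimes_{\max} \calS(\frakz).
\end{align*}

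Thus the entire statement reduces to showing that $\calS(\frakz)$ is not $(\min,\max)$-nuclear, and this strict inequality is the step I expect to be the main obstacle; everything preceding it is formal duality. To produce the gap I would exploit that $\calS(\frakz) = \Span\{1,u,u^*\}$ is the system of degree-$\leq 1$ trigonometric polynomials with $C_{\env}^*(\calS(\frakz)) = C(\T)$, so that minimal positivity of an element of $M_k(\calS(\frakz) \otimes \calS(\frakz))$ is detected by pointwise positivity as a matrix-valued function on $\T^2$, whereas maximal positivity is detected by pairing against the dual cone $(\calS(\frakz) \otimes_{\max} \calS(\frakz))^d = \calS(\frakz)^d \otimes_{\min} \calS(\frakz)^d$. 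The technical heart of the argument is then to exhibit one self-adjoint element, at a suitable matrix level, lying in the minimal cone but outside the maximal one; finite-dimensionality guarantees such a witness exists as soon as the two tensor products are known to differ, and producing it explicitly establishes the inequality above and completes the proof.
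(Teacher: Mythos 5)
The paper does not actually prove this statement; it is quoted directly from \cite[Proposition 3.10]{DiscGroup}, so the only thing to compare your argument against is whether it is complete on its own terms. Your duality reduction is sound: for finite-dimensional operator systems the $(\min,\max)$ duality and \cite[Proposition 1.8]{FP} do reduce the claim to the statement that $\calS(\frakz)\otimes_{\min}\calS(\frakz)\neq\calS(\frakz)\otimes_{\max}\calS(\frakz)$. (You could reach the same reduction more directly, without dualizing at all, by using that $\otimes_{\max}$ is projective with respect to operator system quotients: $\phi\otimes\phi:M_4=M_2\otimes_{\max}M_2\to\calS(\frakz)\otimes_{\max}\calS(\frakz)$ is automatically a complete quotient map, so $\phi\otimes\phi$ is a complete quotient map onto the $\min$ tensor product if and only if $\min=\max$ there.)

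The genuine gap is that the reduction is where your proof stops: the inequality $\calS(\frakz)\otimes_{\min}\calS(\frakz)\neq\calS(\frakz)\otimes_{\max}\calS(\frakz)$ carries the entire mathematical content of the proposition, and you do not establish it. The closing remark that ``finite-dimensionality guarantees such a witness exists as soon as the two tensor products are known to differ'' is circular --- the witness exists precisely because the cones differ, which is what you are trying to prove. Worse, the place you propose to look first is probably barren: by Ando's dilation theorem one has $\calS(\frakz)\otimes_{\min}\calS(\frakz)=\calS(\frakz)\otimes_{c}\calS(\frakz)$, and at the scalar level the question becomes whether every pointwise nonnegative bidegree-$(1,1)$ trigonometric polynomial on $\T^2$ is (up to $\varepsilon$) a sum of squares $\sum_r|q_r|^2$ with $q_r$ analytic of bidegree $(1,1)$ --- a degree-bounded two-variable Fej\'er--Riesz statement that tends to hold in the scalar case. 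The known obstruction lives at higher matrix levels, where the matrix-valued two-variable Fej\'er--Riesz theorem with degree bounds fails, and exhibiting such a matrix-valued witness (or an equivalent abstract argument) is a substantive piece of work, not a formality. Until that element is produced, the proof is incomplete.
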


\begin{proposition}\label{DiscGroupEx}
	Let $\frakz = \{1,0,-1\} \subset \Z$. There is a $G$-equivariant complete quotient map
	\begin{align*}
		\phi: (M_2, \id, \frakz, \Z) \to (S(\frakz) , \id ,\frakz, \Z)
	\end{align*}
	which does not induce a complete quotient map on the reduced crossed product.
\end{proposition}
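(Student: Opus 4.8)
The plan is to reduce everything, via the trivial action, to a statement about minimal tensor products and then invoke Proposition \ref{proposition: tensorofquotientmapsisnotaquotient}. The map itself is the Farenick--Paulsen map $\phi:M_2 \to \calS(\frakz)$, $E_{ij} \mapsto \frac{1}{2} u_i u_j^*$, which is a complete quotient map by \cite[Theorem 2.4]{FP}. Since both dynamical systems carry the trivial action, the $G$-equivariance condition $\beta_g(\phi(s)) = \phi(\alpha_g(s))$ reads $\phi(s)=\phi(s)$ and so holds automatically; hence $\phi$ is a $G$-equivariant complete quotient map, and Proposition \ref{prop: covariance under ucp} produces the induced ucp map $\widetilde{\phi}$ on the reduced crossed products. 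It remains to show that $\widetilde{\phi}$ is not a complete quotient map.

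First I would identify $\widetilde{\phi}$ with a tensor map. By Proposition \ref{proposition: reducedtrivialaction}, the trivial-action reduced crossed products are minimal tensor products, and under these identifications $\widetilde{\phi}$ becomes $\phi \otimes \id_{\calS_\lambda(\frakz)} : M_2 \otimes_{\min} \calS_\lambda(\frakz) \to \calS(\frakz) \otimes_{\min} \calS_\lambda(\frakz)$. Because $\Z$ is amenable, the reduced group operator system $\calS_\lambda(\frakz)$ coincides with the universal one $\calS(\frakz)$, so I may regard $\widetilde{\phi}$ as $\phi \otimes \id_{\calS(\frakz)}$, which crucially has the same codomain as the map in Proposition \ref{proposition: tensorofquotientmapsisnotaquotient}.

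The heart of the argument is then a factorization. I would write
\begin{align*}
  \phi \otimes \phi = (\phi \otimes \id_{\calS(\frakz)}) \circ (\id_{M_2} \otimes \phi)
\end{align*}
as maps $M_2 \otimes_{\min} M_2 \to \calS(\frakz) \otimes_{\min} \calS(\frakz)$, which one checks on simple tensors $a \otimes b$. Under the identification $M_2 \otimes_{\min} \calR \cong M_2(\calR)$, the map $\id_{M_2} \otimes \phi$ is the amplification $\phi^{(2)}$; since operator system quotients commute with matrix amplification, $\phi^{(2)}$ is again a complete quotient map. Now suppose, for contradiction, that $\widetilde{\phi} \cong \phi \otimes \id_{\calS(\frakz)}$ were a complete quotient map. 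Complete quotient maps are closed under composition: from the lifting characterization---a surjective ucp map $\psi$ is a complete quotient map precisely when, for every $n$, every $Y \in M_n(\cdot)_+$ in the codomain, and every $\eps>0$, some $W \geq 0$ in the domain satisfies $\psi^{(n)}(W) = Y + \eps I_n$---such lifts compose (splitting $\eps$ into $\eps/2 + \eps/2$ and using unitality). Hence $\phi \otimes \phi$ would be a complete quotient map, contradicting Proposition \ref{proposition: tensorofquotientmapsisnotaquotient}. Therefore $\widetilde{\phi}$ is not a complete quotient map.

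The main obstacle I anticipate is the bookkeeping in the middle two steps rather than any deep difficulty: confirming that the isomorphisms of Proposition \ref{proposition: reducedtrivialaction} genuinely transport $\widetilde{\phi}$ to $\phi \otimes \id$, pinning down the amenability identification $\calS_\lambda(\frakz) \cong \calS(\frakz)$ so the factorization's codomain matches Proposition \ref{proposition: tensorofquotientmapsisnotaquotient}, and recording the two stability facts for complete quotient maps (stability under matrix amplification and under composition). Each is routine but must be stated precisely for the contradiction to go through.
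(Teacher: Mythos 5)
Your argument is correct and follows essentially the same route as the paper: identify the induced map on reduced crossed products with $\phi \otimes_{\min} \id_{\calS(\frakz)}$ via the trivial-action/minimal-tensor identification, factor $\phi \otimes \phi$ through it together with the amplification $\id_{M_2} \otimes \phi = \phi^{(2)}$, and derive a contradiction with Proposition~\ref{proposition: tensorofquotientmapsisnotaquotient}. (Your factorization $(\phi \otimes \id) \circ (\id \otimes \phi)$ is in fact written in the correct order of composition, and your explicit justifications of the two stability facts for complete quotient maps are sound.)
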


\begin{proof}
	Let $\phi:M_2 \to \calS(\frakz)$ be the complete quotient map as above, and suppose that the induced ucp map
	\begin{align*}
		\phi \rtimes_{\id} \frakz : M_2 \rtimes_{\id,\lambda} \frakz \to S(\frakz) \rtimes_{\id, \lambda} \frakz
	\end{align*}
	is a complete quotient map. Observe that, since the $\mathbb{Z}$-action is trivial, under the canonical isomorphisms we have $M_2 \rtimes_{\id, \lambda} \frakz = M_2 \otimes_{\min} \calS(\frakz)$ and $\calS(\frakz) \rtimes_{\id, \lambda} \frakz = \calS(\frakz) \otimes_{\min} \calS(\frakz)$. In this way, we can identify $\phi \rtimes_{\id} \frakz = \phi \otimes_{\min} \id_{S(\frakz)}$. If $\phi \rtimes_{\id} \frakz$ were a complete quotient map, then by an amplification, $\id_{M_2} \otimes_{\min} \phi$ would also be a complete quotient map. This would imply that $\phi \otimes_{\min} \phi = (\id_{M_2} \otimes_{\min} \phi) \circ(\phi \otimes_{\min} \id_{S(\frakz)})$ is a complete quotient map, contradicting Proposition~\ref{proposition: tensorofquotientmapsisnotaquotient}. Hence, $\varphi \rtimes_{\id} \mathfrak{z}$ is not a complete quotient map.

\end{proof}

\section{Full Crossed Products}

In this section we turn to the theory of full crossed products, motivated by the approach for operator algebras in \cite{katsoulisramsey}. In general, there are many choices for a relative full crossed product for operator systems.  We will focus on those regarding the smallest $C^*$-cover of an operator system (the $C^*$-envelope) and the largest $C^*$-cover (the universal $C^*$-algebra of an operator system).
\begin{definition}
  Suppose that $(\calS,G,\frakg,\alpha)$ is a dynamical system. If $(\calC,\rho,\alpha)$ is an $\alpha$-admissible $C^{\ast}$-cover of $(\calS,G,\frakg,\alpha)$, then define the \textit{full crossed product relative to} $\calC$ to be the subsystem
  \begin{align*}
    \calS \rtimes_{\alpha}^\calC \frakg := \Span\{a u_g: a \in \calS, g \in \frakg \} \subset \calC \rtimes_\alpha G\;.
  \end{align*}

  The \textit{full enveloping crossed product} of $(\calS,G,\frakg,\alpha)$ is the crossed product
  \begin{align*}
  \calS \rtimes_{\alpha,\env} \frakg := \calS \rtimes_{\alpha}^{C^*_{\env}(\calS)} \frakg.
  \end{align*}
\end{definition}

\begin{remark}
The analogue of Remark~\ref{remark: dependence on generators, reduced} holds for relative full crossed products as well. Whenever $(\calS,G,\frakg,\alpha)$ is an operator system dynamical system, $(\calC,\rho)$ is an $\alpha$-admissible $C^*$-cover and $\frakh$ is another generating set for $G$ with $\frakg \subseteq \frakh$, then there is a canonical complete order embedding $$\calS \rtimes_{\alpha}^{(\calC,\rho)} \frakg \hookrightarrow \calS \rtimes_{\alpha}^{(\calC,\rho)} \frakh.$$
\end{remark}

\begin{remark}\label{remark: connection to operator algebra, full}
  Remark~\ref{remark: connection to operator algebra, reduced} also applies to relative full crossed products. That is to say, if $\calA$ is a unital operator algebra, $G$ is a discrete group and $(\calA,G,\alpha)$ is an operator algebraic dynamical system with $\alpha$-admissible $C^*$-cover $(\calC,\rho,\alpha)$, then 
  $$(\rho(\calA)+\rho(\calA)^*) \rtimes_{\widetilde{\alpha}}^{(\calC,\widetilde{\rho})} G=(\calA \rtimes_{\alpha}^{(\calC,\rho)} G)+(\calA \rtimes_{\alpha}^{(\calC,\rho)} G)^* \subseteq \calC \rtimes_{\alpha} G.$$
\end{remark}

To define the relative full crossed product with respect to $C_u^*(\calS)$, we need the following proposition.

\begin{proposition}
  Suppose that $(\calS,G,\frakg, \alpha)$ is a dynamical system. Then there is a unique $G$-action $\overline{\alpha}$ on $C^*_u(\calS)$ which extends the action on $\calS$. Moreover, if $j:\calS \to C_u^*(\calS)$ is the canonical complete order embedding, then $(C_u^*(\calS),j,\overline{\alpha})$ is an $\alpha$-admissible $C^*$-cover for $(\calS,G,\frakg,\alpha)$.
\end{proposition}

\begin{proof}
  Suppose that $g \in G$. By the universal property of $C^*_u(\calS)$, there is a unique $*$-homomorphism $\overline{\alpha}_g$ on $C^*_u(\calS)$ for which the diagram
  \begin{center}
    \begin{tikzcd}
      C^*_u(\calS) \arrow{r}{\overline{\alpha}_g} & C^*_u(\calS) \\
      \calS \arrow{u}{j} \arrow{r}{\alpha_g} & \calS \arrow{u}{j}
    \end{tikzcd}
  \end{center}
  commutes. It is not hard to check that $\overline{\alpha}$ defines a $G$-action on $C^*_u(\calS)$.
\end{proof}

For an operator system dynamical system $(\calS,G,\frakg,\alpha)$, we will often denote the associated $G$-action on $C_u^*(\calS)$ by the same letter $\alpha$.

\begin{definition}
  Suppose that $(\calS,G,\frakg,\alpha)$ is a dynamical system. Define the \textit{full crossed product} to be the subsystem
  \begin{align*}
    \calS \rtimes_\alpha \frakg := \Span\{au_g : a \in \calS, g \in \frakg\} \subset C^*_u(\calS) \rtimes_\alpha G\;.
  \end{align*}
\end{definition}

The following fact is straightforward.

\begin{proposition}
\label{proposition: universality of full crossed product}
Let $(\calS,G,\frakg,\alpha)$ be a dynamical system, and let $(\calC,\iota)$ be any $\alpha$-admissible $C^{\ast}$-cover for $(\calS,G,\frakg,\alpha)$. Then there is a unique surjective unital $*$-homomorphism $\iota \rtimes \alpha:\calS \rtimes_{\alpha} \frakg \to \calS \rtimes_{\alpha}^{\calC} \frakg$ such that $\iota \rtimes \alpha(au_g)=\iota(a)u_g$ for all $a \in \calS$ and $g \in \frakg$.
\end{proposition}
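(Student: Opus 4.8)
The plan is to manufacture the map on the level of the ambient crossed product $C^*$-algebras and then restrict it to the relevant operator subsystems. Write $j:\calS \hookrightarrow C^*_u(\calS)$ for the canonical embedding. By the universal property of the universal $C^*$-algebra, the unital complete order embedding $\iota:\calS \to \calC$ induces a unique $*$-homomorphism $\pi:C^*_u(\calS) \to \calC$ with $\pi \circ j = \iota$; since $\iota(\calS)$ generates $\calC$ as a $C^*$-algebra, $\pi$ is surjective. The first task is then to check that $\pi$ intertwines the two actions, i.e. $\pi \circ \overline{\alpha}_g = \alpha_g \circ \pi$ for all $g \in G$, where $\overline{\alpha}$ is the action on $C^*_u(\calS)$ supplied by the preceding proposition and $\alpha$ is the action on $\calC$. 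Both sides are $*$-homomorphisms $C^*_u(\calS) \to \calC$, so by the universal property it suffices to verify agreement on $j(\calS)$: for $s \in \calS$ we have $\pi(\overline{\alpha}_g(j(s))) = \pi(j(\alpha_g(s))) = \iota(\alpha_g(s))$, while $\alpha_g(\pi(j(s))) = \alpha_g(\iota(s)) = \iota(\alpha_g(s))$ by $\alpha$-admissibility of $(\calC,\iota)$. Hence $\pi$ is a $G$-equivariant surjective $*$-homomorphism.

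Next I would invoke the standard functoriality of full crossed products under equivariant maps: applying $\pi$ coefficientwise gives $C_c(G,C^*_u(\calS)) \to C_c(G,\calC)$, $\sum a_g u_g \mapsto \sum \pi(a_g) u_g$, which extends to a $*$-homomorphism $\pi \rtimes G : C^*_u(\calS) \rtimes_\alpha G \to \calC \rtimes_\alpha G$; it is surjective because its image contains the dense subalgebra $C_c(G,\calC)$. Restricting $\pi \rtimes G$ to the operator subsystem $\calS \rtimes_\alpha \frakg = \Span\{j(a)u_g : a \in \calS,\, g \in \frakg\}$ sends $j(a)u_g \mapsto \iota(a)u_g$, so the image is precisely $\calS \rtimes_\alpha^{\calC} \frakg$. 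This restriction is the desired $\iota \rtimes \alpha$, and it is onto by construction. Uniqueness is immediate, since any map carrying each $au_g$ to $\iota(a)u_g$ is determined on the spanning set $\{au_g\}$ of $\calS \rtimes_\alpha \frakg$.

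Every step is a direct appeal either to a universal property or to the functoriality of the full crossed product, so there is no genuine obstacle here (consistent with the statement being labelled straightforward). The only points deserving a moment's care are the equivariance of $\pi$, handled by the uniqueness argument above, and the surjectivity of the induced map $\pi \rtimes G$, which follows from the density of $C_c(G,\calC)$ together with surjectivity of $\pi$.
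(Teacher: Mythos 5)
Your proposal is correct and follows essentially the same route as the paper: extend $\iota$ to a $*$-homomorphism $\Phi:C_u^*(\calS)\to\calC$ by the universal property, observe that $\Phi$ is $G$-equivariant, form $\Phi\rtimes\alpha$ on the full crossed products, and restrict to the operator subsystem. You merely make explicit the equivariance check and the surjectivity/uniqueness remarks that the paper leaves as "easy to see."
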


\begin{proof}
Note that the map $\iota:\calS \to \calC$ is $G$-equivariant with respect to $\alpha$.  By the universal property of $C_u^*(\calS)$, there is a unique unital $*$-homomorphism map $\Phi:C_u^*(\calS) \to \calC$ such that $\Phi_{|\calS}=\iota$.  It is easy to see that $\Phi$ is still $G$-equivariant, so we obtain a unital $*$-homomorphism $\Phi \rtimes \alpha:C_u^*(\calS) \rtimes_{\alpha} G \to \calC \rtimes_{\alpha} G$.  Restricting to $\calS \rtimes_{\alpha} \frakg$ yields the desired map.
\end{proof}

There are two difficulties in working with full crossed products.  The first is that surjective ucp maps between operator systems are not, in general, quotient maps of operator systems.  This problem arises even in low dimensions, such as in Proposition \ref{proposition: tensorofquotientmapsisnotaquotient}.  The other key difficulty can be seen by considering any dynamical system $(\calS,G,\frakg,\alpha)$ equipped with the trivial action $\alpha=\id$.  Proposition \ref{proposition: fullenvelopingtrivialaction} below shows that $\calS \rtimes_{\id,\env} \frakg \simeq \calS \otimes_{\ess} \calS(\frakg)$, while Proposition \ref{proposition: reducedtrivialaction} shows that $\calS \rtimes_{\id,\lambda} \frakg=\calS \otimes_{\min} \calS_{\lambda}(\frakg)$.  On the other hand, the tensor product structures arising from $\calS \rtimes_{\id} \frakg$ are not as well understood.

\begin{proposition}
Let $(\calS,G,\frakg,\id)$ be a dynamical system with the trivial action.  Then $\calS \rtimes_{\id} \frakg$ is completely order isomorphic to the inclusion of the subspace $\calS \otimes \calS(\frakg) \subseteq C_u^*(\calS) \otimes_{\max} C^*(G)$.
\end{proposition}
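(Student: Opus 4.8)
The plan is to deduce the result directly from the corresponding $C^*$-algebraic identity, in exactly the same spirit as Example~\ref{example: opalg trivial actions} and Proposition~\ref{proposition: reducedtrivialaction}. The point is that, once the ambient crossed product $C^*$-algebra is identified with a maximal tensor product, the statement becomes the assertion that a $*$-isomorphism restricts to a complete order isomorphism on the relevant operator subsystem.

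First I would observe that when $\alpha=\id$, the induced $G$-action $\overline{\alpha}$ on $C_u^*(\calS)$ is also trivial: by the uniqueness clause in the universal property of $C_u^*(\calS)$, each $\overline{\alpha}_g$ is the unique unital $*$-endomorphism restricting to $\id_\calS=\alpha_g$ on $\calS$, which forces $\overline{\alpha}_g=\id_{C_u^*(\calS)}$. Thus the full crossed product $\calS \rtimes_{\id} \frakg$ lives inside $C_u^*(\calS) \rtimes_{\id} G$ equipped with the trivial action. Next I would invoke the standard fact that, for a trivial action, $C_u^*(\calS) \rtimes_{\id} G$ is $*$-isomorphic to $C_u^*(\calS) \otimes_{\max} C^*(G)$ via the map $a u_g \mapsto a \otimes u_g$. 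This holds because both $C^*$-algebras carry the same universal property: a covariant representation of $(C_u^*(\calS),G,\id)$ is precisely a pair of commuting representations of $C_u^*(\calS)$ and $G$, since the covariance condition $\pi(a)=u_s\pi(a)u_s^*$ reduces to commutation; and this is exactly the data classifying representations of the maximal tensor product with $C^*(G)$ (see \cite[Lemma 2.73 and Corollary 7.17]{danawilliams}).

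Finally, since this map is a $*$-isomorphism, it is in particular a complete order isomorphism, and hence its restriction to any operator subsystem is again a complete order isomorphism onto its image. Restricting it to $\calS \rtimes_{\id} \frakg=\Span\{a u_g : a \in \calS, g \in \frakg\}$ carries this subsystem onto $\Span\{a \otimes u_g : a \in \calS, g \in \frakg\}$. Because $\calS(\frakg)=\Span\{u_g : g \in \frakg\}$ and the unitaries $u_g$ with $g \in \frakg$ are linearly independent in $C^*(G)$, this span is exactly the algebraic tensor product $\calS \otimes \calS(\frakg)$ sitting inside $C_u^*(\calS) \otimes_{\max} C^*(G)$, carrying the inherited operator system structure. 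This yields the desired complete order isomorphism.

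I expect the argument to be essentially routine, as it reduces to restricting a $*$-isomorphism; the only point requiring care is confirming that the image span is genuinely the full algebraic tensor product $\calS \otimes \calS(\frakg)$, which is immediate from linear independence of $\{u_g\}$. I would also flag, for context, that in contrast with the reduced and enveloping cases one cannot simplify further to one of the four standard operator system tensor products, since $C^*(G)$ need not coincide with $C_u^*(\calS(\frakg))$; this is precisely why the statement records only the concrete inclusion $\calS \otimes \calS(\frakg) \subseteq C_u^*(\calS) \otimes_{\max} C^*(G)$ rather than identifying it with $\otimes_c$.
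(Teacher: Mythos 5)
Your proposal is correct and follows exactly the same route as the paper's own (much terser) proof: identify $C_u^*(\calS) \rtimes_{\id} G$ with $C_u^*(\calS) \otimes_{\max} C^*(G)$ via the universal property for trivial actions, then restrict the resulting $*$-isomorphism to the subsystem $\Span\{a u_g : a \in \calS,\, g \in \frakg\}$, which is carried onto $\calS \otimes \calS(\frakg)$. The extra details you supply (triviality of the induced action on $C_u^*(\calS)$ and linear independence of the $u_g$) are correct and merely make explicit what the paper leaves implicit.
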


\begin{proof}
We note that $C_u^*(\calS) \rtimes_{\id} G$ is canonically isomorphic to $C_u^*(\calS) \otimes_{\max} C^*(G)$, and that this isomorphism maps $\calS \rtimes_{\alpha} \frakg$ onto $\calS \otimes \calS(\frakg)$, which completes the proof.
\end{proof}

One could define a \emph{universal-enveloping} tensor product of operator systems $\calS$ and $\calT$ to be the operator system structure $\calS \otimes_{ue} \calT$ arising from the inclusion $\calS \otimes \calT \subseteq C_u^*(\calS) \otimes_{\max} C_{\env}^*(\calT)$.  In this way, for any trivial dynamical system $(\calS,G,\frakg,\id)$, we have $\calS \rtimes_{\id} \frakg \simeq \calS \otimes_{ue} \calS(\frakg)$.  However, the properties of this tensor product are unclear.  If $\pi_{\calS}:C_u^*(\calS) \to C_{\env}^*(\calS)$ and $\pi_{\calT}:C_u^*(\calT) \to C_{\env}^*(\calT)$ are the canonical quotient maps, then we obtain the sequence of ucp maps
\begin{center}
$\begin{tikzcd}
C_u^*(\calS) \otimes_{\max} C_u^*(\calT) \rar{\id \otimes \pi_{\calT}} & C_u^*(\calS) \otimes_{\max} C_{\env}^*(\calT) \rar{\pi_{\calS} \otimes \id} & C_{\env}^*(\calS) \otimes_{\max} C_{\env}^*(\calT) \\
\calS \otimes_c \calT \rar \uar[hook] & \calS \otimes_{ue} \calT \rar \uar[hook] & \calS \otimes_{\ess} \calT \uar[hook].
\end{tikzcd}$
\end{center}

In particular, we have $\ess \leq ue \leq c$. Similarly, one can define the \textit{enveloping-universal} tensor product of operator systems $\calS$ and $\calT$ to be the operator system structure $\calS \otimes_{eu} \calT$ arising from the inclusion $\calS \otimes \calT \subseteq C_{\env}^*(\calS) \otimes_{\max} C_u^*(\calT)$.  Clearly, the flip map $\calS \otimes \calT \to \calT \otimes \calS$ induces a complete order isomorphism $\calS \otimes_{ue} \calT \to \calT \otimes_{eu} \calS$.  On the other hand, we can at least distinguish $ue$ from $c$. To this end, we need a slight generalization of a result of Kavruk \cite[Corollary 5.8]{kavruk14}. The proof is almost identical to \cite[Proposition 3.6]{FP}; we include it for completeness.

\begin{proposition}
\label{proposition: minc with one opsys containing enough unitaries implies minmax on one with c star envelope}
Let $\calS$ be an operator system, and let $\calT \subseteq \calC$ be an operator system that contains enough unitaries in a unital $C^*$-algebra $\calC$. If $\calS \otimes_{\min} \calT=\calS \otimes_c \calT$, then $\calS \otimes_{\min} \calC=\calS \otimes_{\max} \calC$.
\end{proposition}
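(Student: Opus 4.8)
The plan is to reduce the statement to a positivity‑testing argument on the $C^*$-algebra side and then propagate the hypothesis from $\calT$ to all of $\calC$ using the unitaries. Since $\calC$ is a unital $C^*$-algebra it is $(c,\max)$-nuclear \cite[Theorem 6.7]{KPTT}, so $\calS \otimes_c \calC = \calS \otimes_{\max} \calC$; as $\min \leq \max$ always holds, it suffices to show that every positive element of $M_n(\calS \otimes_{\min} \calC)$ is positive in $M_n(\calS \otimes_{\max} \calC)$, i.e. that the identity map is a complete order isomorphism. I would test positivity in $\calS \otimes_{\max} \calC = \calS \otimes_c \calC$ against commuting pairs of ucp maps, and because $\calC$ is a $C^*$-algebra I would reduce, via a Stinespring dilation of the $\calC$-map together with a lifting of $\varphi$ into the commutant, to pairs $(\varphi,\pi)$ in which $\pi:\calC \to \bofh$ is a unital $*$-representation and $\varphi:\calS \to \bofh$ is ucp with $\varphi(\calS)$ commuting with $\pi(\calC)$. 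Thus the goal becomes: for every such pair, the map $\varphi \cdot \pi$ is ucp on $\calS \otimes_{\min} \calC$.

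Fixing such a pair $(\varphi,\pi)$, the first substantive step is to invoke the hypothesis on $\calT$. Since $\pi(\calT) \subseteq \pi(\calC)$ commutes with $\varphi(\calS)$, the map $\varphi \cdot (\pi|_\calT)$ is ucp on $\calS \otimes_c \calT$ by definition of the commuting tensor product, hence ucp on $\calS \otimes_{\min} \calT$ by the assumed equality $\calS \otimes_{\min} \calT = \calS \otimes_c \calT$. Using that the minimal tensor product is injective \cite[Theorem 4.6]{KPTT}, I regard $\calS \otimes_{\min} \calT$ as an operator subsystem of $\calS \otimes_{\min} \calC$; realizing the latter concretely, let $\calB$ denote the $C^*$-algebra it generates. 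I then extend $\varphi \cdot (\pi|_\calT)$ to a ucp map $\widehat{\Psi}:\calB \to \bofh$ by Arveson's extension theorem \cite{paulsenbook}.

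The heart of the argument, and the step I expect to be the main obstacle, is upgrading this extension from $\calT$ to all of $\calC$. For each $u \in \calT$ that is unitary in $\calC$, the element $1 \otimes u$ is a unitary of $\calB$ with $\widehat{\Psi}(1 \otimes u) = \varphi(1)\pi(u) = \pi(u)$, which is again unitary precisely because $\pi$ is a $*$-representation; hence $1 \otimes u$ lies in the multiplicative domain of $\widehat{\Psi}$ \cite{paulsenbook}. Since $\calT$ contains enough unitaries, these elements generate $1 \otimes \calC$, so $1 \otimes \calC$ is contained in the multiplicative domain and $\widehat{\Psi}|_{1 \otimes \calC}$ is a unital $*$-homomorphism agreeing with $\pi$ on the generating set $\calT$, forcing $\widehat{\Psi}(1 \otimes c) = \pi(c)$ for all $c \in \calC$. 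Multiplicativity against the multiplicative domain then gives $\widehat{\Psi}(s \otimes c) = \widehat{\Psi}(s \otimes 1)\,\widehat{\Psi}(1 \otimes c) = \varphi(s)\pi(c)$, so $\widehat{\Psi}$ restricts to $\varphi \cdot \pi$ on $\calS \otimes \calC$. Consequently $\varphi \cdot \pi$ is ucp on $\calS \otimes_{\min} \calC$, and applying it to a positive $P \in M_n(\calS \otimes_{\min} \calC)$ yields $(\varphi \cdot \pi)^{(n)}(P) \geq 0$. As $(\varphi,\pi)$ ranges over all such commuting pairs, this shows $P$ is positive in $M_n(\calS \otimes_{\max} \calC)$, completing the identification $\calS \otimes_{\min} \calC = \calS \otimes_{\max} \calC$. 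The delicate points to get right are the reduction to $*$-representations on the $\calC$-factor (without it $\pi(u)$ need not be unitary and the multiplicative-domain step collapses) and the verification that the enough-unitaries hypothesis genuinely carries the tensor equality across the whole of $\calC$ rather than only $\calT$.
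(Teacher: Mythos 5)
Your proposal is correct and follows essentially the same route as the paper's proof: reduce to $\calS \otimes_{\min}\calC = \calS\otimes_c\calC$ via $(c,\max)$-nuclearity of $\calC$, pass to commuting pairs in which the $\calC$-map is a $*$-representation by Stinespring plus the commutant lifting theorem, use the hypothesis to make $\varphi\cdot(\pi|_{\calT})$ ucp on $\calS\otimes_{\min}\calT$, extend by Arveson, and push from $\calT$ to $\calC$ through the multiplicative domain generated by the unitaries. The only cosmetic difference is that you package the Stinespring/commutant-lifting step as an up-front reduction rather than carrying the isometry $V$ through to the final identity $\varphi\cdot\psi = V^*\eta(\cdot)V$ as the paper does.
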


\begin{proof}
As every unital $C^*$-algebra is $(c,\max)$-nuclear \cite[Theorem 6.7]{KPTT}, we need only show that $\calS \otimes_{\min} \calC=\calS \otimes_c \calC$. Let $X \in M_k(\calS \otimes_{\min} \calC)$ be positive, and let $\varphi:\calS \to \bofh$ and $\psi:\calC \to \bofh$ be ucp maps with commuting ranges. We consider a minimal Stinespring representation $\psi=V^*\pi(\cdot)V$ of $\psi$ on some Hilbert space $\calH_{\pi}$.  We apply the commutant lifting theorem \cite[Theorem 1.3.1]{arveson69} to obtain a unital $*$-homomorphism $\rho:\psi(\calC)' \to \calB(\calH_{\pi})$ such that $V^*\rho(a)=aV^*$ for all $a \in \psi(\calC)'$.  The fact that $\varphi(\calS) \subseteq \psi(\calC)'$ implies that $\gamma:=\rho \circ \varphi:\calS \to \calB(\calH_{\pi})$ is ucp and its range commutes with the range of $\pi$. Since the restriction of $\gamma \cdot \pi$ to $\calS \otimes_c \calT$ is ucp and $\calS \otimes_{\min} \calT=\calS \otimes_c \calT$, it follows that $\gamma \cdot \pi$ is ucp on $\calS \otimes_{\min} \calT$.  We extend $\gamma \cdot \pi$ by Arveson's extension theorem \cite{arveson69} to a ucp map $\eta:C_{\env}^*(\calS) \otimes_{\min} \calC \to \calB(\calH_{\pi})$. Let $\{u_{\alpha}\}_{\alpha \in A}$ be a collection of unitaries in $\calT$ that generate $\calC$ as a $C^*$-algebra. Then for each $\alpha \in A$, we have $$\eta(1 \otimes u_{\alpha})=\gamma \cdot \pi(1 \otimes u_{\alpha})=\pi(u_{\alpha}),$$
which is unitary. Then each $u_{\alpha}$ is in the multiplicative domain $\calM_{\eta}$ of $\eta$, from which it follows that $1 \otimes \calC \subseteq \calM_{\eta}$. Therefore, for $s \in \calS$ and $b \in \calC$, we obtain
$$\eta(s \otimes b)=\eta(s \otimes 1)\eta(1 \otimes b)=\gamma(s)\pi(b).$$
In particular, it follows that
$$\varphi \cdot \psi(s \otimes b)=\varphi(s)\psi(b)=\varphi(s)V^*\pi(b)V=V^*\rho(\varphi(s))\pi(b)V=V^* \gamma \cdot \pi(s \otimes b)V.$$
Therefore, $\varphi \cdot \psi=V^* \eta(\cdot)_{|\calS \otimes_{\min} \calC}V$ is ucp, so that $\varphi \cdot \psi(X) \in M_k(\calB(\calH))_+$.  Hence, $X$ is positive in $M_k(\calS \otimes_c \calC)$ as well, which completes the proof.
\end{proof}

To show that $ue \neq c$, we consider the operator system
\begin{align*}
  \cW_{3,2} := \left\{\left[\begin{array}{cccccc} a & b & 0 & 0 & 0 & 0 \\
  b & a & 0 & 0 & 0 & 0 \\
  0 & 0 & a & c & 0 & 0 \\
  0 & 0 & c & a & 0 & 0 \\
  0 & 0 & 0 & 0 & a & d \\
  0 & 0 & 0 & 0 & d & a \\
\end{array} \right] : a,b,c,d \in \C \right\} \subset M_6(\C).
\end{align*}

If $\mathbb{Z}_2 * \mathbb{Z}_2 *\mathbb{Z}_2$ is the free product of three copies of $\mathbb{Z}_2$ and $h_i$ is the generator of the $i$-th copy of $\mathbb{Z}_2$, and $NC(3)$ is the operator subsystem of $C^*(*_3 \mathbb{Z}_2)$ spanned by $h_1,h_2,h_3$, then the dual operator system $NC(3)^d$ is unitally completely order isomorphic to $\cW_{3,2}$ \cite[Proposition 5.13]{DiscGroup}. Moreover, $\cW_{3,2}$ is a nuclearity detector \cite[Theorem 0.3]{kavruk15}, and since $\cW_{3,2}$ has a finite-dimensional $C^*$-cover, it follows that $C^*_{env}(\cW_{3,2})$ is nuclear. By \cite[Proposition 4.2]{guptaluthra}, $\calW_{3,2}$ is $(\min,\ess)$-nuclear.

\begin{proposition}
\label{proposition: ue neq c}
Let $G$ be a discrete group with generating set $\frakg$. If $G$ is not amenable, then $\calW_{3,2} \otimes_{ue} \calS(\frakg) \neq \calW_{3,2} \otimes_c \calS(\frakg)$.
\end{proposition}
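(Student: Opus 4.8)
The plan is to argue by contraposition, using the nuclearity detector property of $\calW_{3,2}$ as the ultimate source of the obstruction. Recall that $C^*(G)$ is nuclear if and only if $G$ is amenable, and that, since $\calS(\frakg)$ contains enough unitaries in $C^*(G)$, we have $C^*_{\env}(\calS(\frakg))=C^*(G)$. Because $C^*(G)$ is a unital $C^*$-algebra it is $(c,\max)$-nuclear, so $\calW_{3,2}\otimes_c C^*(G)=\calW_{3,2}\otimes_{\max}C^*(G)$; thus the detector property of $\calW_{3,2}$ \cite{kavruk15} says precisely that $G$ is amenable if and only if $\calW_{3,2}\otimes_{\min}C^*(G)=\calW_{3,2}\otimes_c C^*(G)$. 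Assuming $G$ is \emph{not} amenable, we therefore have $\calW_{3,2}\otimes_{\min}C^*(G)\neq\calW_{3,2}\otimes_c C^*(G)$, and I would now invoke Proposition~\ref{proposition: minc with one opsys containing enough unitaries implies minmax on one with c star envelope} in its contrapositive form, applied with $\calS=\calW_{3,2}$, $\calT=\calS(\frakg)$ and $\calC=C^*(G)$: the failure of $\min=\max$ on $\calW_{3,2}\otimes C^*(G)$ forces the failure of the hypothesis, i.e. $\calW_{3,2}\otimes_{\min}\calS(\frakg)\neq\calW_{3,2}\otimes_c\calS(\frakg)$.

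At this point I would use that $\calW_{3,2}$ is $(\min,\ess)$-nuclear \cite{guptaluthra}, so that $\calW_{3,2}\otimes_{\min}\calS(\frakg)=\calW_{3,2}\otimes_{\ess}\calS(\frakg)$; the previous step then reads $\calW_{3,2}\otimes_{\ess}\calS(\frakg)\neq\calW_{3,2}\otimes_c\calS(\frakg)$. Since we always have the ordering $\ess\leq ue\leq c$, this only separates the two ends of the chain, and the desired conclusion $\calW_{3,2}\otimes_{ue}\calS(\frakg)\neq\calW_{3,2}\otimes_c\calS(\frakg)$ follows \emph{provided} the chain collapses at the bottom, i.e. provided $\calW_{3,2}\otimes_{ue}\calS(\frakg)=\calW_{3,2}\otimes_{\ess}\calS(\frakg)$. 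Granting this identification, the inequality $\ess\neq c$ becomes $ue\neq c$, which is exactly the statement.

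The identification $\calW_{3,2}\otimes_{ue}\calS(\frakg)=\calW_{3,2}\otimes_{\ess}\calS(\frakg)$ is the heart of the matter, and I expect it to be the main obstacle. Here I would exploit that $C^*_{\env}(\calW_{3,2})$ is nuclear, so that $\calW_{3,2}\otimes_{\ess}\calS(\frakg)$ is the operator system inherited from $C^*_{\env}(\calW_{3,2})\otimes_{\min}C^*(G)$, while $\calW_{3,2}\otimes_{ue}\calS(\frakg)$ is inherited from $C^*_u(\calW_{3,2})\otimes_{\max}C^*(G)$. Concretely, the map $ue\to\ess$ in the defining diagram is induced by the quotient $\pi\otimes\id\colon C^*_u(\calW_{3,2})\otimes_{\max}C^*(G)\to C^*_{\env}(\calW_{3,2})\otimes_{\max}C^*(G)$, and the claim is that this restricts to a complete order embedding on $\calW_{3,2}\otimes\calS(\frakg)$; equivalently, that positivity of an element of $\calW_{3,2}\otimes\calS(\frakg)$ in $C^*_{\env}(\calW_{3,2})\otimes_{\max}C^*(G)$ already lifts to positivity in $C^*_u(\calW_{3,2})\otimes_{\max}C^*(G)$. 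This lifting is genuinely delicate, since in general $\ess\leq ue$ is strict (for instance when the second tensor factor is a non-nuclear $C^*$-algebra the two can differ), so the argument must use in an essential way both the nuclear envelope of the detector and the fact that $\calS(\frakg)$ is the \emph{degree-one}, finite-dimensional piece of $C^*(G)$; I would attempt this by a Stinespring and commutant-lifting argument on the generators $u_g$, modelled on the proof of Proposition~\ref{proposition: minc with one opsys containing enough unitaries implies minmax on one with c star envelope}, or alternatively by working inside the explicit model $\calW_{3,2}\subseteq M_6(\C)$ together with its duality with $NC(3)\subseteq C^*(\Z_2\ast\Z_2\ast\Z_2)$.
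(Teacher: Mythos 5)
Your first paragraph is fine and matches the endgame of the paper's argument: the detector property of $\calW_{3,2}$, Proposition~\ref{proposition: minc with one opsys containing enough unitaries implies minmax on one with c star envelope} in contrapositive form, and $(\min,\ess)$-nuclearity together show that $\calW_{3,2}\otimes_{\ess}\calS(\frakg)\neq\calW_{3,2}\otimes_c\calS(\frakg)$ when $G$ is not amenable. But, as you yourself flag, this only separates the two ends of the chain $\ess\leq ue\leq c$, and your proof is incomplete exactly at the step you call the heart of the matter: the identification $\calW_{3,2}\otimes_{ue}\calS(\frakg)=\calW_{3,2}\otimes_{\ess}\calS(\frakg)$ is asserted but never proved. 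Note where the difficulty really lies: $\calW_{3,2}\otimes_{ue}\calS(\frakg)$ sits inside $C_u^*(\calW_{3,2})\otimes_{\max}C^*(G)$, so the nuclearity of $C_{\env}^*(\calW_{3,2})$ is of no direct use here --- it is the \emph{first} factor that would need to be replaced by its envelope, and neither $C_u^*(\calW_{3,2})$ nor $C^*(G)$ is nuclear in the situation at hand. The nuclearity of $C_{\env}^*(\calW_{3,2})$ collapses the $ue$ structure only on the \emph{flipped} product $\calS(\frakg)\otimes_{ue}\calW_{3,2}\subseteq C_u^*(\calS(\frakg))\otimes_{\max}C_{\env}^*(\calW_{3,2})$, where it forces $\max=\min$ and hence $ue=\min$. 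Your sketched commutant-lifting argument is modelled on a proposition that compares $c$ with $\max$ using unitaries in the second factor; it does not address the actual issue, which is whether the quotient $C_u^*(\calW_{3,2})\to C_{\env}^*(\calW_{3,2})$, tensored (max) with $C^*(G)$, restricts to a complete order embedding on $\calW_{3,2}\otimes\calS(\frakg)$. No argument for this is given, and it is not clear that it is even true for non-amenable $G$.

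The paper sidesteps this entirely with a symmetry dichotomy that your proposal is missing. Assume for contradiction that $\calW_{3,2}\otimes_{ue}\calS(\frakg)=\calW_{3,2}\otimes_c\calS(\frakg)$. Since $c$ is symmetric, either the flip map fails to be a complete order isomorphism for $ue$ (in which case $ue\neq c$ and we are done immediately), or it is one, and then $\calW_{3,2}\otimes_{ue}\calS(\frakg)\simeq\calS(\frakg)\otimes_{ue}\calW_{3,2}$. On that flipped side the nuclear envelope of $\calW_{3,2}$ \emph{does} apply and gives $\calS(\frakg)\otimes_{ue}\calW_{3,2}=\calS(\frakg)\otimes_{\min}\calW_{3,2}=\calS(\frakg)\otimes_{\ess}\calW_{3,2}$; flipping back using symmetry of $\ess$ and invoking $(\min,\ess)$-nuclearity of $\calW_{3,2}$ yields $\calW_{3,2}\otimes_{\min}\calS(\frakg)=\calW_{3,2}\otimes_c\calS(\frakg)$, and then your first paragraph finishes the proof. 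In short: the conditional collapse of $ue$ down to $\ess$ is extracted from the contradiction hypothesis via the flip map, rather than proved unconditionally. As written, your argument has a genuine gap at its central step.
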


\begin{proof}
Suppose that $\calW_{3,2} \otimes_{ue} \calS(\frakg)=\calW_{3,2} \otimes_c \calS(\frakg)$. Note that the commuting tensor product is symmetric \cite[Theorem 6.3]{KPTT}.  Thus, if $\cW_{3,2} \otimes_{ue} \calS(\frakg)$ is not completely order isomorphic to $\calS(\frakg) \otimes_{ue} \cW_{3,2}$ via the flip map, then we are done.  Suppose that this flip map is a complete order isomorphism with respect to the $ue$-tensor product.  Then $\cW_{3,2} \otimes_{ue} \calS(\frakg) \simeq \calS(\frakg) \otimes_{ue} \cW_{3,2}$.  Since $C_{\env}^*(\cW_{3,2})$ is $C^*$-nuclear and $\calS(\frakg) \otimes_{ue} \cW_{3,2} \subseteq C_u^*(G) \otimes_{\max} C_{\env}^*(\cW_{3,2})=C_u^*(G) \otimes_{\min} C_{\env}^*(\cW_{3,2})$, we see that $\calS(\frakg) \otimes_{\min} \cW_{3,2}=\calS(\frakg) \otimes_{ue} \cW_{3,2}$.  In particular, $\calS(\frakg) \otimes_{\ess} \cW_{3,2} = \calS(\frakg) \otimes_{ue} \cW_{3,2}$.  Since $\ess$ is also symmetric, applying the flip map, we have that $$\cW_{3,2} \otimes_{\ess} \calS(\frakg)=\cW_{3,2} \otimes_{ue} \calS(\frakg).$$
Since $\cW_{3,2}$ is $(\min,\ess)$-nuclear, it follows that $\cW_{3,2} \otimes_{\min} \calS(\frakg)=\cW_{3,2} \otimes_c \calS(\frakg)$.  By Proposition \ref{proposition: minc with one opsys containing enough unitaries implies minmax on one with c star envelope}, we have $\cW_{3,2} \otimes_{\min} C^*(G)=\cW_{3,2} \otimes_{\max} C^*(G)$, which implies that $C^*(G)$ is nuclear \cite[Theorem 0.3]{kavruk15}, which is a contradiction since $G$ is not amenable.  Hence, $\cW_{3,2} \otimes_{ue} \calS(\frakg) \neq \cW_{3,2} \otimes_c \calS(\frakg)$.
\end{proof}

\begin{remark}
  In the case of operator algebraic dynamical systems $(\calA, G, \alpha)$, E. Katsoulis and C. Ramsey proved \cite[Theorem 4.1]{katsoulisramsey} that
  \begin{align*}
    C^*_u(\calA \rtimes_{\alpha} G) = C^*_u(\calA) \rtimes_\alpha G\;.
  \end{align*}
  It is known that, in general, $C^*_u(\calS(\frakg))$ does not coincide with $C^*(G)$ (see \cite{DiscGroup}), so such a theorem is not expected to hold for operator system dynamical systems. In fact, $C_u^*(\calS(\frakg))$ and $C^*(G)$ fail to coincide in the case where $G=\mathbb{Z}$ and $\frakg=\{-1,0,1\}$ \cite{DiscGroup}. Moreover, since we are interested in $C^{\ast}$-envelopes, we will focus on properties of the full enveloping crossed product, rather than the full crossed product.
\end{remark}

Like the reduced crossed product, the full enveloping crossed product preserves hyperrigidity. The proof is exactly the same as in Theorem \ref{theorem: C*-envelope of reduced}, so we omit it. For a proof in the operator algebraic case, see \cite[Theorem 2.7]{katsoulis17}.

\begin{theorem}
\label{theorem: hyperrigidity preserved by full enveloping}
  Suppose that $(\calS,G,\frakg, \alpha)$ is a dynamical system. Suppose that $\calS$ is hyperrigid. Then the full enveloping crossed product is hyperrigid. In particular, $C^*_{\env}(\calS \rtimes_{\alpha,\env} \frakg) \simeq C^*_{\env}(\calS) \rtimes_{\alpha} G$.
\end{theorem}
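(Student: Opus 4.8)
The plan is to transcribe the proofs of Lemma~\ref{lemma: maximals lift to reduced crossed product}, Theorem~\ref{theorem: C*-envelope of reduced}, and Corollary~\ref{cor: hyperrigid reduced} almost verbatim, making three substitutions: the reduced crossed product becomes the full enveloping crossed product, the reduced group $C^*$-algebra $C^*_\lambda(G)$ becomes $C^*(G)$, and the reduced group operator system $\calS_\lambda(\frakg)$ becomes $\calS(\frakg)=\Span\{u_g:g\in\frakg\}\subseteq C^*(G)$. Write $\calD:=C^*_{\env}(\calS)\rtimes_\alpha G$. Since $e\in\frakg$ and $\frakg$ generates $G$, the subsystem $\calS\rtimes_{\alpha,\env}\frakg$ generates $\calD$, so $\calD$ is a $C^*$-cover of it; by Theorem~\ref{theorem: hyperrigid is envelope} it then suffices to prove that $\calS\rtimes_{\alpha,\env}\frakg$ is hyperrigid in $\calD$, and the identification $C^*_{\env}(\calS\rtimes_{\alpha,\env}\frakg)\simeq\calD$ follows immediately.

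First I would record the two ingredients that replace the regular-representation bookkeeping of the reduced case. Because $\calS$ is hyperrigid, every $*$-representation of $C^*_{\env}(\calS)$ restricts to a maximal map on $\calS$; and because the unitaries $\{u_g:g\in\frakg\}$ generate $C^*(G)$, the operator system $\calS(\frakg)$ contains enough unitaries, so by Lemma~\ref{containsenoughunitariesimplieshyperrigid} it is hyperrigid in $C^*(G)=C^*_{\env}(\calS(\frakg))$. Both $C^*_{\env}(\calS)$ (embedded via $a\mapsto au_e$) and the canonical copy of $C^*(G)$ lie inside $\calD$ and jointly generate it. The analogue of Lemma~\ref{lemma: maximals lift to reduced crossed product} then reads: if $(\pi,u)$ is a covariant pair for $(C^*_{\env}(\calS),G,\alpha)$ with $\pi$ maximal on $\calS$, then the integrated form $\pi\rtimes u$ is maximal on $\calS\rtimes_{\alpha,\env}\frakg$. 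To prove it I would take any ucp extension $\phi:\calD\to B(\calH)$ of $(\pi\rtimes u)|_{\calS\rtimes_{\alpha,\env}\frakg}$; maximality of $\pi$ on $\calS$ forces $\phi|_{C^*_{\env}(\calS)}=\pi$, and hyperrigidity of $\calS(\frakg)$ forces $\phi|_{C^*(G)}$ to equal the $*$-representation of $C^*(G)$ integrating $u$. Hence both generating subalgebras lie in the multiplicative domain $\calM_\phi$, so $\calM_\phi=\calD$, $\phi$ is multiplicative, and $\phi=\pi\rtimes u$. Applying this to a faithful covariant pair realizing $\calD$ faithfully—available since, by hyperrigidity of $\calS$ together with Lemma~\ref{lemma: uep properties}(2), the $C^*_{\env}(\calS)$-component of any (even the universal) covariant representation is automatically maximal on $\calS$—Arveson's theorem gives that the image of $\calS\rtimes_{\alpha,\env}\frakg$ generates $C^*_{\env}(\calS\rtimes_{\alpha,\env}\frakg)$, which is therefore $\calD$.

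For hyperrigidity proper I would repeat the multiplicative-domain computation for an arbitrary $*$-representation $\rho:\calD\to B(\calH)$: given a ucp extension $\phi$ of $\rho|_{\calS\rtimes_{\alpha,\env}\frakg}$, hyperrigidity of $\calS$ in $C^*_{\env}(\calS)$ places $C^*_{\env}(\calS)$ in $\calM_\phi$ and hyperrigidity of $\calS(\frakg)$ in $C^*(G)$ places $C^*(G)$ in $\calM_\phi$, whence $\phi=\rho$ and $\rho|_{\calS\rtimes_{\alpha,\env}\frakg}$ is maximal. Since $\rho$ is arbitrary, $\calS\rtimes_{\alpha,\env}\frakg$ is hyperrigid in $\calD=C^*_{\env}(\calS\rtimes_{\alpha,\env}\frakg)$, as desired.

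The one place where the full case is genuinely more delicate than the reduced one is the step that pins $\calD$ down as the $C^*$-envelope. In the reduced setting Lemma~\ref{lemma: maximals lift to reduced crossed product} uses the left-regular representation, which is faithful on $C^*_{\env}(\calS)\rtimes_{\alpha,\lambda}G$ for free; for the full crossed product there is no canonical faithful representation, and I expect the main obstacle to be verifying that a faithful covariant representation of $\calD$ can be chosen whose $C^*_{\env}(\calS)$-part is maximal on $\calS$. This is precisely what hyperrigidity of $\calS$ buys, since it makes \emph{every} such component maximal on $\calS$; everything else is the same multiplicative-domain argument used in the reduced case.
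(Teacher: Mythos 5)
Your proof is correct and is exactly the argument the paper intends: the paper omits the proof of this theorem, remarking that it is the same as the multiplicative-domain argument for the reduced case (Theorem \ref{theorem: C*-envelope of reduced} and Corollary \ref{cor: hyperrigid reduced}), which is precisely what you have transcribed, and your opening reduction via Theorem \ref{theorem: hyperrigid is envelope} even renders the ``maximal covariant pair'' paragraph unnecessary. The one cosmetic caveat is that the canonical map $C^*(G) \to C^*_{\env}(\calS) \rtimes_{\alpha} G$ need not be injective for a full crossed product, so ``the canonical copy of $C^*(G)$'' should be read as the $C^*$-subalgebra of the crossed product generated by the unitaries $u_g$; since $\Span\{u_g : g \in \frakg\}$ still contains enough unitaries in that subalgebra, Lemma \ref{containsenoughunitariesimplieshyperrigid} applies verbatim and nothing in your argument changes.
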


We now give the tensor product description of the full enveloping crossed product with respect to a trivial action.

\begin{proposition}
\label{proposition: fullenvelopingtrivialaction}
  Suppose that $(\calS, G, \frakg, id)$ is a trivial dynamical system. We have the isomorphism
  \begin{align*}
    \calS \rtimes_{id,\env} \frakg &\simeq \calS \otimes_{\ess} \calS(\frakg)\;.
  \end{align*}
\end{proposition}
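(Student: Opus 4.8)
The plan is to show that both sides are, on the nose, the same concrete operator system: the subspace $\calS \otimes \calS(\frakg)$ equipped with the order structure inherited from $C^*_{\env}(\calS) \otimes_{\max} C^*(G)$. The generator-to-generator map $a u_g \mapsto a \otimes u_g$ is then automatically a complete order isomorphism.

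First I would dispose of the left-hand side. As in Example \ref{example: opalg trivial actions}, because the action is trivial the universal property of the full crossed product and that of the maximal tensor product coincide, yielding a canonical $*$-isomorphism $C^*_{\env}(\calS) \rtimes_{\id} G \simeq C^*_{\env}(\calS) \otimes_{\max} C^*(G)$ that sends $a u_g \mapsto (a \otimes 1)(1 \otimes u_g) = a \otimes u_g$. Restricting this $*$-isomorphism to the defining subsystem $\calS \rtimes_{\id,\env} \frakg = \Span\{a u_g : a \in \calS,\, g \in \frakg\}$ gives a complete order isomorphism onto $\Span\{a \otimes u_g : a \in \calS,\, g \in \frakg\} = \calS \otimes \calS(\frakg)$, carrying the operator system structure inherited from $C^*_{\env}(\calS) \otimes_{\max} C^*(G)$.

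Next I would analyze the right-hand side. By definition, $\calS \otimes_{\ess} \calS(\frakg)$ is the structure on $\calS \otimes \calS(\frakg)$ inherited from the inclusion into $C^*_{\env}(\calS) \otimes_{\max} C^*_{\env}(\calS(\frakg))$. Hence the two descriptions will match verbatim once we establish that $C^*_{\env}(\calS(\frakg)) = C^*(G)$. This is the single substantive point. Since $\frakg$ is inverse-closed, contains $e$, and generates $G$, the elements $\{u_g : g \in \frakg\}$ are unitaries in $C^*(G)$ lying in $\calS(\frakg)$, and they generate $C^*(G)$ as a $C^*$-algebra; thus $\calS(\frakg)$ contains enough unitaries in $C^*(G)$. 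By Lemma \ref{containsenoughunitariesimplieshyperrigid} together with Theorem \ref{theorem: hyperrigid is envelope} (equivalently, Kavruk's result \cite[Proposition 5.6]{kavruk14}), it follows that $C^*(G) = C^*_{\env}(\calS(\frakg))$.

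Combining these observations, both $\calS \rtimes_{\id,\env} \frakg$ and $\calS \otimes_{\ess} \calS(\frakg)$ are the common subspace $\calS \otimes \calS(\frakg)$ with the order structure inherited from $C^*_{\env}(\calS) \otimes_{\max} C^*(G)$, so the map $a u_g \mapsto a \otimes u_g$ is a complete order isomorphism, completing the proof. The only place requiring genuine input is the identification $C^*_{\env}(\calS(\frakg)) = C^*(G)$, which the ``enough unitaries'' machinery already developed supplies immediately; everything else is formal bookkeeping around the canonical crossed-product/tensor-product $*$-isomorphism.
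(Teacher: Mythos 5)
Your proof is correct and follows essentially the same route as the paper: identify $C^*_{\env}(\calS) \rtimes_{\id} G$ with $C^*_{\env}(\calS) \otimes_{\max} C^*(G)$ and restrict to the generating subsystem. The only difference is that you explicitly justify the identification $C^*_{\env}(\calS(\frakg)) = C^*(G)$ via the enough-unitaries lemma, a point the paper's proof leaves implicit when it says the span is ``by definition'' the essential tensor product; this is a welcome clarification but not a different argument.
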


\begin{proof}
  We know that
  \begin{align*}
    C^*_{\env}(\calS) \rtimes_{id} G &\simeq C^*_{\env}(\calS) \otimes_{\max} C^*(G)\;.
  \end{align*}
  This induces an isomorphism
  \begin{align*}
    \calS \rtimes_{id,\env} \frakg &\simeq \Span\{a \otimes u_g \in C^*_{\env}(\calS) \otimes_{\max} C^*(G): a \in \calS, g \in \frakg \}\;.
  \end{align*}
  By definition, the span on the right hand side is $\calS \otimes_{\ess} \calS(\frakg)$.
\end{proof}

For amenable groups, there is no difference between the reduced and the full enveloping crossed products.

\begin{proposition}
  Suppose that $(\calS,G,\frakg,\alpha)$ is a dynamical system with $G$ amenable. Then $\calS \rtimes_{\alpha,\lambda} \frakg = \calS \rtimes_{\alpha,\env} \frakg$.
\end{proposition}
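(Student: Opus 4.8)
The plan is to reduce the statement to the corresponding fact at the level of the $C^*$-crossed products of $C^*_{\env}(\calS)$, and then to observe that both operator system crossed products are operator subsystems of those $C^*$-algebras spanned by corresponding generators. Write $\overline{\alpha}$ for the induced action of $G$ on $C^*_{\env}(\calS)$. Unwinding the definitions, both crossed products live inside $C^*$-crossed products of the \emph{same} $C^*$-dynamical system:
\begin{align*}
\calS \rtimes_{\alpha,\lambda} \frakg = \Span\{a\lambda_g : a \in \calS,\, g \in \frakg\} \subseteq C^*_{\env}(\calS) \rtimes_{\overline{\alpha},\lambda} G,
\end{align*}
while
\begin{align*}
\calS \rtimes_{\alpha,\env} \frakg = \Span\{au_g : a \in \calS,\, g \in \frakg\} \subseteq C^*_{\env}(\calS) \rtimes_{\overline{\alpha}} G.
\end{align*}
In each case the operator system carries the matrix order inherited as a subspace of the ambient $C^*$-algebra.

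The crucial input is the classical fact that amenability forces the full and reduced crossed products of a $C^*$-algebra to coincide. Since $G$ is amenable, the regular representation induces a $*$-isomorphism
\begin{align*}
\Lambda : C^*_{\env}(\calS) \rtimes_{\overline{\alpha}} G \to C^*_{\env}(\calS) \rtimes_{\overline{\alpha},\lambda} G
\end{align*}
(see \cite[Theorem 4.2.6]{BrownAndOzawa}). By construction, $\Lambda$ acts as the identity on the dense copy of $C_c(G,C^*_{\env}(\calS))$; in particular $\Lambda(au_g)=a\lambda_g$ for every $a \in C^*_{\env}(\calS)$ and $g \in G$.

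The remaining step is to restrict $\Lambda$ to the generating span. Because $\Lambda(au_g)=a\lambda_g$, the map $\Lambda$ carries $\Span\{au_g : a \in \calS,\, g \in \frakg\}$ bijectively onto $\Span\{a\lambda_g : a \in \calS,\, g \in \frakg\}$, so it maps $\calS \rtimes_{\alpha,\env} \frakg$ onto $\calS \rtimes_{\alpha,\lambda} \frakg$. Since a $*$-isomorphism of $C^*$-algebras is automatically a complete order isomorphism, and since both operator systems inherit their matrix orders from their ambient $C^*$-crossed products, the restriction of $\Lambda$ is a complete order isomorphism
\begin{align*}
\calS \rtimes_{\alpha,\env} \frakg \simeq \calS \rtimes_{\alpha,\lambda} \frakg,
\end{align*}
as desired.

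There is essentially no obstacle here beyond invoking the $C^*$-algebraic theorem that amenability of $G$ collapses the full and reduced crossed products. The only point deserving care is that $\Lambda$ is compatible with the generators, sending $au_g \mapsto a\lambda_g$, so that it genuinely restricts to a bijection between the two operator subsystems rather than merely identifying their $C^*$-hulls; this is guaranteed by the fact that $\Lambda$ is the identity on $C_c(G,C^*_{\env}(\calS))$.
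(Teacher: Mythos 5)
Your proof is correct and follows essentially the same route as the paper: both invoke the $*$-isomorphism $C^*_{\env}(\calS) \rtimes_{\overline{\alpha}} G \simeq C^*_{\env}(\calS) \rtimes_{\overline{\alpha},\lambda} G$ for amenable $G$, note that it sends generators to generators, and restrict it to the operator subsystem spanned by $\{a u_g : a \in \calS,\ g \in \frakg\}$. Your write-up merely makes explicit the points the paper leaves implicit (that the isomorphism fixes $C_c(G,C^*_{\env}(\calS))$ and that a $*$-isomorphism restricts to a complete order isomorphism of operator subsystems).
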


\begin{proof}
  Since $G$ is amenable, we have the isomorphism $C^*_{\env}(\calS) \rtimes_{\alpha,\lambda} G = C^*_{\env}(\calS) \rtimes_{\alpha} G$ sending generators to generators. By restricting this isomorphism to $\calS \rtimes_{\alpha,\lambda} \frakg$, we get the identity $\calS \rtimes_{\alpha,\lambda} \frakg = \calS \rtimes_{\alpha,\env} \frakg$.
\end{proof}

\section{Two Problems of Katsoulis and Ramsey}

Before proving Theorems \ref{Theorem: intro counterexample problem 2 KR} and \ref{Theorem: intro counterexample problem 1 KR}, we need some facts about the non self-adjoint operator algebra $\calU(X)$ of an operator space $X$. If $X$ is an operator space in $B(\calH)$, then define the associated non self-adjoint operator algebra $\calU(X)$ (see \cite[Sections 2.2.10-2.2.11]{blecherlemerdy} for more on this algebra) as the subalgebra of $B(H^{2}) \simeq M_2(B(H))$ given by
\begin{align*}
  \calU(X) := \left\{ \left[\begin{array}{cc} \lambda & x \\ 0 & \lambda \end{array} \right]: \lambda \in \C, \, x \in X \right\} \;.
\end{align*}
Note that the algebra $\calU(X)$ does not depend on the representation chosen for $X$.

Recall that an operator system $\calS$ \emph{detects nuclearity} (or is a \emph{nuclearity detector}) if, whenever $\calD$ is a unital $C^{\ast}$-algebra and
\begin{align*}
  \calS \otimes_{\min} \calD = \calS \otimes_{c} \calD,
\end{align*}
then $\calD$ is a nuclear $C^{\ast}$-algebra \cite{kavruk15}. Our first goal is to show that if $\calS$ is a nuclearity detector, then $\calU(\calS)$ is a nuclearity detector for $C^*$-algebras; that is, if $\calU(\calS) \otimes_{\min} \calD = \calU(\calS) \otimes_{\max} \calD$ then $\calD$ is a nuclear $C^{\ast}$-algebra.

We first wish to interpret the relevant tensor products of an operator system $\calS$ with a unital $C^*$-algebra as an operator space. We know that the norm arising from the operator system $\calS \otimes_{\min} \calD$ agrees with the minimal operator space tensor norm \cite[Corollary 4.9]{KPTT}. For the norm arising from the commuting tensor product, we have the following result.

\begin{lemma}
\label{commutingnorm}
  Let $\calS$ be an operator system; let $\calD$ be a unital $C^{\ast}$-algebra; and let $d \in \mathbb{N}$. For any element $Z \in M_d(\calS \otimes_c \calD) \simeq \calS \otimes_c M_d(\calD)$, we have
  \begin{align*}
    \|Z\|_{M_d(\mathcal{S} \otimes_c \mathcal{D})} = \sup_{\theta, \pi} \|\theta \cdot \pi(Z)\|
  \end{align*}
  where the supremum is taken over all pairs of commuting maps
  \begin{align*}
    \theta &: \calS \to B(H) \text{ and}\\
    \pi &: M_d(\calD) \to B(H),
  \end{align*}
  where $\theta$ is ucp and $\pi$ is a unital $*$-homomorphism.
\end{lemma}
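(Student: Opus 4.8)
The plan is to express the matrix norm on $M_d(\calS \otimes_c \calD)$ entirely in terms of commuting pairs of ucp maps via the order structure of the commuting tensor product, and then to upgrade the ucp map on the $C^*$-algebra factor to a $*$-homomorphism by a Stinespring dilation together with commutant lifting, just as in the proof of Proposition~\ref{proposition: minc with one opsys containing enough unitaries implies minmax on one with c star envelope}.

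I would work under the stated identification $M_d(\calS \otimes_c \calD) \simeq \calS \otimes_c M_d(\calD)$ and recover the norm from the order structure by the usual $2 \times 2$ trick: writing $1$ for the order unit of $\calS \otimes_c M_d(\calD)$, one has $\|Z\| \le t$ if and only if $\left[\begin{smallmatrix} t1 & Z \\ Z^* & t1 \end{smallmatrix}\right]$ is positive in $M_2(\calS \otimes_c M_d(\calD))$. By the definition of the commuting tensor product cone, this block matrix is positive exactly when $(\theta \cdot \rho)^{(2)}$ carries it to a positive operator for every commuting pair of ucp maps $\theta : \calS \to B(H)$ and $\rho : M_d(\calD) \to B(H)$; since $\theta \cdot \rho$ is $*$-preserving, expanding this $2 \times 2$ operator inequality is equivalent to $\|(\theta \cdot \rho)(Z)\| \le t$. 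Hence
\begin{align*}
\|Z\|_{M_d(\calS \otimes_c \calD)} = \sup_{\theta, \rho} \|(\theta \cdot \rho)(Z)\|,
\end{align*}
where the supremum runs over all commuting pairs with $\rho$ merely ucp. Because every unital $*$-homomorphism $\pi$ is ucp, each admissible pair $(\theta, \pi)$ from the statement gives a ucp map $\theta \cdot \pi$ on $\calS \otimes_c M_d(\calD)$, which is therefore completely contractive; this already yields $\sup_{\theta,\pi} \|(\theta \cdot \pi)(Z)\| \le \|Z\|$.

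For the reverse inequality it suffices to dominate each term $\|(\theta \cdot \rho)(Z)\|$ with $\rho$ ucp by one coming from a $*$-homomorphism. Given such a commuting pair, I would take a minimal Stinespring representation $\rho = V^* \Pi(\cdot) V$ with $\Pi : M_d(\calD) \to B(K)$ a unital $*$-homomorphism and $V : H \to K$ an isometry, and then apply the commutant lifting theorem \cite[Theorem 1.3.1]{arveson69} to obtain a unital $*$-homomorphism $\sigma : \rho(M_d(\calD))' \to B(K)$ with $V^* \sigma(a) = a V^*$ for all $a \in \rho(M_d(\calD))'$. Since $\theta(\calS) \subseteq \rho(M_d(\calD))'$, the map $\theta' := \sigma \circ \theta : \calS \to B(K)$ is ucp with range commuting with $\Pi(M_d(\calD))$, and the intertwining relation gives $(\theta \cdot \rho)(\cdot) = V^* (\theta' \cdot \Pi)(\cdot) V$ on $\calS \otimes M_d(\calD)$. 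As $V$ is an isometry this forces $\|(\theta \cdot \rho)(Z)\| \le \|(\theta' \cdot \Pi)(Z)\|$, and $(\theta', \Pi)$ is an admissible pair with $\Pi$ a $*$-homomorphism. Taking suprema then gives $\|Z\| \le \sup_{\theta, \pi} \|(\theta \cdot \pi)(Z)\|$, completing the argument. The only genuinely substantive step is this passage from an arbitrary ucp map to a $*$-homomorphism; everything else is the bookkeeping translation between the $\otimes_c$ order cone and operator norms. The subtle point to get right is that the lifted map $\theta'$ retains commuting range with $\Pi(M_d(\calD))$, which is exactly what the relation $V^* \sigma(a) = a V^*$ together with $\theta(\calS) \subseteq \rho(M_d(\calD))'$ secures.
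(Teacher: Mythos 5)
Your argument is correct, but it follows a different route from the paper's. The paper first reduces to $d=1$ by absorbing the matrix amplification into $\calD$, then invokes the identification of $\calS \otimes_c \calD$ with the inclusion $\calS \otimes \calD \subseteq C_u^*(\calS) \otimes_{\max} \calD$, so that the norm is a supremum over pairs of commuting unital $*$-homomorphisms $\theta' : C_u^*(\calS) \to B(H)$ and $\pi : \calD \to B(H)$; the only work is then to pass between $\theta'$ and its ucp restriction to $\calS$, which is free in one direction and follows from the universal property of $C_u^*(\calS)$ in the other (noting that commutation of ranges is preserved because $\calS$ generates $C_u^*(\calS)$). In other words, the paper upgrades the map on the \emph{operator system} side to a $*$-homomorphism, where this costs nothing. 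You instead work directly from the defining cones of $\otimes_c$, recover the norm by the standard $2\times 2$ positivity criterion, and then upgrade the map on the \emph{$C^*$-algebra} side from ucp to a $*$-homomorphism via a minimal Stinespring dilation and Arveson's commutant lifting theorem, checking that the lifted map $\theta' = \sigma \circ \theta$ still commutes with $\Pi(M_d(\calD))$ and that compression by the isometry $V$ only decreases norms. Both arguments are sound. Your version has the merit of not presupposing the theorem identifying $\calS \otimes_c \calD$ inside $C_u^*(\calS) \otimes_{\max} \calD$ (it uses only the definition of the commuting cone), at the price of importing the Stinespring--commutant-lifting machinery --- which is essentially the same computation the paper carries out separately in Proposition~\ref{proposition: minc with one opsys containing enough unitaries implies minmax on one with c star envelope}. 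The paper's version is shorter given that identification and keeps the dilation-theoretic work confined to that later proposition.
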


\begin{proof}
  Since $M_d(\mathcal{D})$ is itself a unital $C^{\ast}$-algebra, by replacing $M_d(\mathcal{D})$ with $\mathcal{D}$ if necessary, we may assume that $d=1$. We know that the order structure of $\calS \otimes_c \calD$ is inherited from the order structure of $C^*_u(\calS) \otimes_{\max} \calD$. From this fact, it follows that the norm structure of $\calS \otimes_c \calD$ is also inherited from the norm structure of $C^*_u(\calS) \otimes_{\max} \calD$. Given an element $z \in \calS \otimes_c \calD$, we have
  \begin{align*}
    \|z\|_c = \sup_{\theta', \pi'} \|\theta' \cdot \pi'(z)\|,
  \end{align*}
  where the supremum is taken over all unital $*$-homomorphisms $\theta': C^*_u(\calS) \to B(H)$ and $\pi': \calD \to B(H)$, with commuting ranges. Since every unital $*$-homomorphism $\theta: C^*_u(\calS) \to B(H)$ restricts to a ucp map on $\calS$, we obtain the inequality
  \begin{align*}
    \|z\|_c \leq \sup_{\theta, \pi} \|\theta \cdot \pi(z)\|,
  \end{align*}
  where the supremum is taken over all ucp maps $\theta:\calS \to B(\calH)$ and unital $*$-homomorphisms $\pi:\calD \to B(\calH)$ with commuting ranges. Conversely, if $\theta: \calS \to B(H)$ is ucp and $\pi: \calD \to B(H)$ is a unital $*$-homomorphism whose range commutes with the range of $\theta$, then by the universal property of $C_u^*(\calS)$, there is a unital $*$-homomorphism $\theta' : C^*_u(\calS) \to B(H)$ such that $\theta'_{|\calS}=\theta$. Since the range of $\theta$ commutes with the range of $\pi$ and and $C_u^*(\calS)$ is generated as a $C^*$-algebra by $\calS$, we have that $\theta'$ and $\pi$ have commuting ranges. Moreover, $\|\theta' \cdot \pi(z)\| = \|\theta \cdot \pi(z)\|$. Therefore, the reverse inequality holds, and the result follows.
\end{proof}

The next lemma is the operator system analogue of a result of Blecher and Duncan \cite[Lemma 6.3]{blecherduncan}.

\begin{lemma}
\label{lemma: operator system nuclearity detectors to operator algebra nuclearity detectors}
  Let $\calS$ be an operator system and let $\calD$ be a unital $C^*$-algebra. If $\calU(\calS)\otimes_{\min}\calD = \calU(\calS) \otimes_{\max} \calD$, then $\calS \otimes_{\min} \calD = \calS \otimes_c \calD$. In particular, if $\calS$ is a nuclearity detector, then $\calU(\calS)$ is a nuclearity detector.
\end{lemma}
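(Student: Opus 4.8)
The plan is to detect the equality of the minimal and commuting operator system tensor products on $\calS \otimes \calD$ by encoding it into the norms of a single nilpotent element of the operator algebra tensor product $\calU(\calS) \otimes \calD$. Write $\kappa : \calS \hookrightarrow \calU(\calS)$ for the complete isometry $s \mapsto \left[\begin{smallmatrix} 0 & s \\ 0 & 0 \end{smallmatrix}\right]$, whose image is a square-zero subspace. Since a unital complete isometry between operator systems is automatically a complete order isomorphism, and since the identity map $\calS \otimes_c \calD \to \calS \otimes_{\min} \calD$ is always ucp, it suffices to prove that this identity map is completely isometric, i.e. that $\|Z\|_{M_n(\calS \otimes_{\min} \calD)} = \|Z\|_{M_n(\calS \otimes_c \calD)}$ for all $n$ and all $Z$. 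I would first reduce to the ground level by absorbing matrices into $\calD$: because $\calU(\calS) \otimes_{\bullet} M_n(\calD) = M_n(\calU(\calS) \otimes_{\bullet} \calD)$ for $\bullet \in \{\min,\max\}$, the hypothesis passes to the unital $C^*$-algebra $\calD_n := M_n(\calD)$. Fixing $Z \in \calS \otimes \calD_n$, set $\hat{Z} := (\kappa \otimes \id)(Z) \in \calU(\calS) \otimes \calD_n$ and establish two estimates.

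The \emph{minimal} estimate is spatial. As the minimal operator system norm agrees with the minimal operator space norm \cite[Corollary 4.9]{KPTT} and the minimal operator algebra tensor product is the spatial one, a faithful representation of $\calD_n$ realizes $\hat{Z}$ as an off-diagonal block $\left[\begin{smallmatrix} 0 & \tilde{Z} \\ 0 & 0 \end{smallmatrix}\right]$, whose norm is $\|\tilde{Z}\| = \|Z\|_{\calS \otimes_{\min} \calD_n}$ by injectivity of the minimal tensor product \cite[Theorem 4.4]{KPTT}; hence $\|\hat{Z}\|_{\calU(\calS) \otimes_{\min} \calD_n} = \|Z\|_{\calS \otimes_{\min} \calD_n}$. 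For the \emph{maximal} estimate I would need only the inequality $\|\hat{Z}\|_{\calU(\calS) \otimes_{\max} \calD_n} \geq \|Z\|_{\calS \otimes_c \calD_n}$. Given any commuting pair consisting of a ucp map $\theta_0 : \calS \to B(\calH)$ and a unital $*$-homomorphism $\rho_0 : \calD_n \to B(\calH)$, feed it into the universal property of $\calU(\calS)$ \cite{blecherlemerdy}: the square-zero complete contraction $s \mapsto \left[\begin{smallmatrix} 0 & \theta_0(s) \\ 0 & 0 \end{smallmatrix}\right]$ extends to a unital completely contractive homomorphism $\pi : \calU(\calS) \to M_2(B(\calH))$, while $\rho := \rho_0 \oplus \rho_0$ is a $*$-homomorphism whose range commutes with $\pi(\calU(\calS))$. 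The integrated form satisfies $(\pi \cdot \rho)(\hat{Z}) = \left[\begin{smallmatrix} 0 & (\theta_0 \cdot \rho_0)(Z) \\ 0 & 0 \end{smallmatrix}\right]$, of norm $\|(\theta_0 \cdot \rho_0)(Z)\|$; taking the supremum over all such pairs and applying Lemma~\ref{commutingnorm} (with $\calD_n$ in place of $\calD$) yields the inequality.

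Combining the two estimates with the hypothesis $\|\hat{Z}\|_{\min} = \|\hat{Z}\|_{\max}$ and the always-valid bound $\|Z\|_{\min} \leq \|Z\|_c$ produces the squeeze $\|Z\|_{\calS \otimes_{\min} \calD_n} = \|\hat{Z}\|_{\min} = \|\hat{Z}\|_{\max} \geq \|Z\|_{\calS \otimes_c \calD_n} \geq \|Z\|_{\calS \otimes_{\min} \calD_n}$, forcing equality at every matrix level. Thus the identity map $\calS \otimes_c \calD \to \calS \otimes_{\min} \calD$ is a unital complete isometry, hence a complete order isomorphism, and $\calS \otimes_{\min} \calD = \calS \otimes_c \calD$. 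The final assertion is then immediate: if $\calS$ detects nuclearity, this equality forces $\calD$ to be nuclear, so $\calU(\calS)$ detects nuclearity.

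The main obstacle is conceptual rather than computational. The representation theory of $\calU(\calS)$ is governed by \emph{square-zero} complete contractions of $\calS$, not by ucp maps, so there is no exact identification of the maximal operator algebra tensor norm with the commuting operator system tensor product; a naive attempt to match them (for instance through the self-adjoint system $\calU(\calS) + \calU(\calS)^*$) stalls because that system has scalar diagonal and admits no ucp retraction onto $\calS$. The way around this is to observe that only the single inequality $\|\hat{Z}\|_{\max} \geq \|Z\|_c$ is actually required, and that this is the easy direction---obtained by embedding an arbitrary ucp map into a square-zero complete contraction via the corner map---after which the spatial computation of the minimal norm together with the universal bound $\min \leq c$ closes the argument by a squeeze.
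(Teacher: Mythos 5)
Your proposal is correct and follows essentially the same route as the paper: given a commuting pair $(\theta,\pi)$ with $\theta$ ucp on $\calS$ and $\pi$ a unital $*$-homomorphism on $\calD$, one forms the unital completely contractive corner representation $\theta'$ of $\calU(\calS)$ paired with $\pi \oplus \pi$, bounds $\|\theta\cdot\pi(z)\|$ by the maximal operator algebra norm of the off-diagonal lift of $z$, identifies that with the minimal norm via the hypothesis and the spatial description, and concludes by the characterization of the commuting norm in Lemma~\ref{commutingnorm}. The only cosmetic difference is that you absorb the matrix levels into $\calD$ at the outset, where the paper handles them by saying the same argument repeats at each level.
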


\begin{proof}
  Suppose that $\calU(\calS) \otimes_{\min} \calD = \calU(\calS) \otimes_{\max} \calD$, and let $z \in \calS \otimes \calD$. Since we always have $\|z\|_c \geq \|z\|_{\min}$, it suffices to show that $\|z\|_c \leq \|z\|_{\min}$. We may write
  \begin{align*}
    z = \sum_{i=1}^k z_i \otimes d_i,
  \end{align*}
  where $z_i \in \calS$ and $d_i \in \calD$. Let $\theta: \calS \to B(\calH)$ be ucp and $\pi: \calD \to B(\calH)$ be a unital $*$-homomorphism whose range commutes with the range of $\theta$. We define
  \begin{align*}
    \theta': \calU(\calS) \to B(\calH^2) : \left[\begin{array}{cc} \lambda & x \\ 0 & \lambda \end{array} \right] \mapsto \left[\begin{array}{cc} \lambda & \theta(x) \\ 0 & \lambda \end{array} \right].
  \end{align*}
  It follows from \cite[Lemma 8.1]{paulsenbook} that $\theta'$ is a unital completely contractive map. A calculation also shows that $\theta'$ is a homomorphism. The amplification $\pi \oplus \pi: \calD \to B(H^2)$ is a unital $*$-homomorphism such that $\theta'$ and $\pi \oplus \pi$ have commuting ranges. In particular, by definition of the maximal operator algebra tensor norm,
  \begin{align*}
    \|\theta \cdot \pi(z)\| &= \left\| \theta' \cdot (\pi \oplus \pi)\left(\sum_i \left[\begin{array}{cc} 0 & z_i \\ 0 & 0 \end{array} \right] \otimes d_i \right) \right\| \\
    &\leq \left\|\sum_i \left[\begin{array}{cc} 0 & z_i \\ 0 & 0 \end{array} \right] \otimes d_i \right\|_{\max} \\
    &= \left\|\sum_i \left[\begin{array}{cc} 0 & z_i \\ 0 & 0 \end{array} \right] \otimes d_i \right\|_{\min} \\
    &= \|z\|_{\min}
  \end{align*}
  Since this is for arbitrary pairs $(\theta,\pi)$, it follows that $\|z\|_c \leq \|z\|_{\min}$. A similar argument shows that $\|Z\|_c=\|Z\|_{\min}$ for every $Z \in M_d(\calS \otimes \calD)$ and $d \geq 1$. Thus, $\calS \otimes_{\min} \calD=\calS \otimes_c \calD$.

  If $\calS$ is a nuclearity detector and $\calD$ is a unital $C^{\ast}$-algebra such that $\calU(\calS) \otimes_{\min} \calD = \calU(\calS) \otimes_{\max} \calD$, then the above proof shows that $\calS \otimes_{\min} \calD = \calS \otimes_c \calD$. Since $\calS$ is a nuclearity detector, $\calD$ must be a nuclear $C^*$-algebra.
\end{proof}

Recall that
\begin{align*}
  \cW_{3,2} := \left\{\left[\begin{array}{cccccc} a & b & 0 & 0 & 0 & 0 \\
  b & a & 0 & 0 & 0 & 0 \\
  0 & 0 & a & c & 0 & 0 \\
  0 & 0 & c & a & 0 & 0 \\
  0 & 0 & 0 & 0 & a & d \\
  0 & 0 & 0 & 0 & d & a \\
\end{array} \right] : a,b,c,d \in \C \right\} \subset M_6(\C).
\end{align*}
Note that $C_{\env}^*(\calW_{3,2})$ is nuclear. In particular, $\calU(\cW_{3,2})$ has a nuclear $C^*$-envelope as well. On the other hand, since $\calW_{3,2}$ is a nuclearity detector \cite[Theorem 0.3]{kavruk15}, Lemma \ref{lemma: operator system nuclearity detectors to operator algebra nuclearity detectors} shows that $\calU(\calW_{3,2})$ is a nuclearity detector. Unlike the situation for operator systems, the operator algebra $\calU(\cW_{3,2})$ also detects nuclearity for non-unital $C^*$-algebras. To show this fact, we first need the following fact.

\begin{proposition}
\label{proposition: unital opalg tensor cstaralg is contained in unitization}
Let $\tau$ be the minimal or maximal operator algebra tensor product. Let $\calA$ be a unital operator algebra and let $\calD$ be a $C^*$-algebra. If $\calD^+$ is the minimal unitization of $\calD$ and $\pi:\calD^+ \to \calD^+/\calD \simeq \bC$ is the canonical quotient map, then the sequence
\begin{center}
$\begin{tikzcd}
0 \rar & \calA \otimes_{\tau} \calD \rar & \calA \otimes_{\tau} \calD^+ \rar{\id_{\calA} \otimes \pi} & \calA \rar & 0
\end{tikzcd}$
\end{center}
is exact. In other words, $\calA \otimes_{\tau} \calD \subseteq \calA \otimes_{\tau} \calD^+$ completely isometrically and $\id_{\calA} \otimes \pi$ is a complete quotient map of $\calA \otimes_{\tau} \calD^+$ onto $\calA$.
\end{proposition}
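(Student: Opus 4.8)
The plan is to treat the two ``in other words'' assertions---that $\calA \otimes_{\tau} \calD \hookrightarrow \calA \otimes_{\tau} \calD^+$ completely isometrically, and that $\id_{\calA} \otimes \pi$ is a complete quotient map---in parallel for $\tau \in \{\min,\max\}$, isolating the one genuinely harder point (the completely isometric embedding when $\tau = \max$). Throughout I write $\calD^+ = \calD \oplus \bC 1$, so that $\pi \colon \calD^+ \to \bC$ is the quotient $*$-homomorphism with $\ker \pi = \calD$, and I let $s \colon \bC \to \calD^+$, $\lambda \mapsto \lambda 1$, be the unital $*$-homomorphism splitting $\pi$, i.e. $\pi \circ s = \id_{\bC}$. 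Both $\pi$ and $s$ are completely contractive homomorphisms of $C^*$-algebras.

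The complete quotient claim is the easy half, and it is uniform in $\tau$. By functoriality of the operator algebra tensor product $\tau$ under completely contractive homomorphisms \cite{blecherlemerdy}, the map $\id_{\calA} \otimes \pi \colon \calA \otimes_{\tau} \calD^+ \to \calA \otimes_{\tau} \bC \simeq \calA$ is completely contractive, and the section $\sigma := \id_{\calA} \otimes s \colon \calA \to \calA \otimes_{\tau} \calD^+$ is completely contractive with $(\id_{\calA} \otimes \pi)\circ \sigma = \id_{\calA}$. A completely contractive map admitting a completely contractive left inverse is automatically completely isometric, so $\sigma$ is a complete isometry; hence for $a \in M_n(\calA)$ with $\|a\| < 1$ the element $\sigma^{(n)}(a)$ is a preimage of $a$ in the open unit ball of $M_n(\calA \otimes_{\tau} \calD^+)$. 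This is precisely what it means for $\id_{\calA} \otimes \pi$ to be a complete quotient map.

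Next I would establish the completely isometric embedding. For $\tau = \min$ this is immediate from injectivity of the minimal tensor product \cite{blecherlemerdy}: since $\calD \hookrightarrow \calD^+$ is a complete isometry, so is $\calA \otimes_{\min} \calD \to \calA \otimes_{\min} \calD^+$. For $\tau = \max$ injectivity fails, and here the main work lies. I would pass to the $C^*$-algebraic level via the completely isometric inclusions $\calA \otimes_{\max} \calB \subseteq C_{\max}^*(\calA) \otimes_{\max} \calB$ for $\calB \in \{\calD, \calD^+\}$ \cite[6.1.9]{blecherlemerdy}, together with exactness of the maximal $C^*$-tensor product \cite{BrownAndOzawa}: applying $C_{\max}^*(\calA) \otimes_{\max} -$ to $0 \to \calD \to \calD^+ \to \bC \to 0$ gives a completely isometric inclusion $C_{\max}^*(\calA) \otimes_{\max} \calD \subseteq C_{\max}^*(\calA) \otimes_{\max} \calD^+$. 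Chasing the resulting commuting square of embeddings shows that the two natural norms on $M_n(\calA \otimes \calD)$---the one inherited from $\calA \otimes_{\max} \calD$ and the one inherited from $\calA \otimes_{\max} \calD^+$---both coincide with the norm inherited from $C_{\max}^*(\calA) \otimes_{\max} \calD^+$, yielding the complete isometry $\calA \otimes_{\max} \calD \hookrightarrow \calA \otimes_{\max} \calD^+$.

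Finally, for exactness in the middle I would use the idempotent $E := \sigma \circ (\id_{\calA} \otimes \pi) = \id_{\calA} \otimes (s\pi)$ on $\calA \otimes_{\tau} \calD^+$, which is completely contractive since $s\pi \colon \calD^+ \to \calD^+$ is. On the algebraic tensor product $E$ annihilates $\calA \otimes \calD$ and fixes $\calA \otimes \bC 1$, so this bounded idempotent has $\ker E = \overline{\calA \otimes \calD}$, which by the embedding just established is exactly the completely isometric copy of $\calA \otimes_{\tau} \calD$. Since $\sigma$ is injective, $\ker(\id_{\calA} \otimes \pi) = \ker E$, and the sequence is exact. I expect the $\tau = \max$ embedding to be the crux: without injectivity one cannot argue spatially, and the argument must route through exactness of the maximal $C^*$-tensor product and the Blecher--Le Merdy identification of $\calA \otimes_{\max} \calB$ inside $C_{\max}^*(\calA) \otimes_{\max} \calB$.
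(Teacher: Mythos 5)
Your proof is correct, and its overall skeleton (splitting $a \mapsto a \otimes 1_{\calD^+}$ for the quotient claim, injectivity of $\otimes_{\min}$ for the minimal embedding) matches the paper's. The one place you genuinely diverge is the completely isometric embedding $\calA \otimes_{\max} \calD \subseteq \calA \otimes_{\max} \calD^+$: the paper disposes of this in one line by citing Blecher--Duncan \cite[Proposition 2.5]{blecherduncan}, whereas you derive it from the inclusion $\calA \otimes_{\max} \calB \subseteq C_{\max}^*(\calA) \otimes_{\max} \calB$ of \cite[6.1.9]{blecherlemerdy} together with exactness of the maximal $C^*$-tensor product applied to $0 \to \calD \to \calD^+ \to \bC \to 0$. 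Your norm chase is sound: for $x \in M_n(\calA \otimes \calD)$, the $\calA \otimes_{\max} \calD$-norm equals the $C_{\max}^*(\calA) \otimes_{\max} \calD$-norm, which by exactness equals the $C_{\max}^*(\calA) \otimes_{\max} \calD^+$-norm, which by \cite[6.1.9]{blecherlemerdy} again equals the $\calA \otimes_{\max} \calD^+$-norm. What this buys is a self-contained argument resting only on a fact the paper already records in its preliminaries plus standard $C^*$-exactness, at the cost of a slightly longer proof; the paper's citation is shorter but outsources the content. You also explicitly verify exactness in the middle via the completely contractive idempotent $E = \id_{\calA} \otimes (s\pi)$, showing $\ker(\id_{\calA} \otimes \pi) = \overline{\calA \otimes \calD}$ --- a point the paper's proof leaves implicit in its ``in other words'' reformulation; this is a worthwhile addition rather than a redundancy.
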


\begin{proof}
The inclusion $\calA \otimes_{\min} \calD \subseteq \calA \otimes_{\min} \calD^+$ holds by definition of the minimal tensor product. The fact that $\calA \otimes_{\max} \calD \subseteq \calA \otimes_{\max} \calD^+$ completely isometrically follows by \cite[Proposition 2.5]{blecherduncan}.

The tensor product map $\id_{\calA} \otimes \pi:\calA \otimes_{\tau} \calD^+ \to \calA \otimes_{\tau} \bC=\calA$ is completely contractive. On the other hand, the map $\phi:\calA \to \calA \otimes_{\tau} \calD^+$ given by $\phi(a)=a \otimes 1_{\calD^+}$ is a completely contractive unital homomorphism.  Hence, $\phi$ is a completely contractive splitting of $\id_{\calA} \otimes \pi$. It follows that $\id_{\calA} \otimes \pi$ is a complete quotient map.
\end{proof}

\begin{lemma}
\label{lemma: tensor from nonunital cstar to unitization}
Let $\calA$ be a unital operator algebra; let $\calD$ be a non-unital $C^*$-algebra, and let $\calD^+$ be its minimal unitization. Then $\calA \otimes_{\min} \calD=\calA \otimes_{\max} \calD$ if and only if $\calA \otimes_{\min} \calD^+=\calA \otimes_{\max} \calD^+$.
\end{lemma}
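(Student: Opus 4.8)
The plan is to prove the two implications separately, with the converse being routine and the forward implication carrying all the content. Throughout I would use that for $\tau\in\{\min,\max\}$ the preceding Proposition~\ref{proposition: unital opalg tensor cstaralg is contained in unitization} gives a completely isometric inclusion $\calA\otimes_\tau\calD\subseteq\calA\otimes_\tau\calD^+$. Hence for the converse I would simply observe that if $\calA\otimes_{\min}\calD^+=\calA\otimes_{\max}\calD^+$, then the $\min$- and $\max$-norms already agree on every matrix level of the subspace $\calA\otimes\calD$, so $\calA\otimes_{\min}\calD=\calA\otimes_{\max}\calD$. For the forward implication it suffices, since $\|\cdot\|_{\min}\le\|\cdot\|_{\max}$ is automatic, to prove $\|x\|_{\max}\le\|x\|_{\min}$ for every $x\in M_n(\calA\otimes\calD^+)$ and every $n$.

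So fix such an $x$. Using the completely isometric embedding $\calA\otimes_{\max}\calD^+\subseteq C^*_{\max}(\calA)\otimes_{\max}\calD^+$, I would compute $\|x\|_{\max}$ as the supremum of $\|(\mu\cdot\nu)^{(n)}(x)\|$ over pairs consisting of a completely contractive homomorphism $\mu:\calA\to\bofh$ and a unital $*$-homomorphism $\nu:\calD^+\to\bofh$ with commuting ranges. Fix one such pair. Because $\calD\tleq\calD^+$, the projection $q$ onto $\overline{\nu(\calD)\calH}$ commutes with $\nu(\calD^+)$, and since $q$ is the $\sot$-limit of $\nu(e_\lambda)$ for an approximate unit $(e_\lambda)$ of $\calD$, it also commutes with $\mu(\calA)$. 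Thus $(\mu\cdot\nu)^{(n)}(x)$ is block diagonal with respect to $q\oplus(1-q)$, and its norm is the larger of the two corner norms. On $(1-q)\calH$ the map $\nu$ kills $\calD$ and hence factors through $\calD^+/\calD\cong\bC$ as $\pi$, so the $(1-q)$-corner equals $\mu^{(n)}\bigl((\id\otimes\pi)^{(n)}(x)\bigr)(1-q)$, whose norm is at most $\|(\id\otimes\pi)^{(n)}(x)\|\le\|x\|_{\min}$, since $\id\otimes\pi$ is completely contractive from the minimal tensor product onto $\calA$.

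The hard part is the $q$-corner, where $\calD$ acts non-degenerately. Here I would write $x=y+\hat a$ with $y\in M_n(\calA\otimes\calD)$ and $\hat a\in M_n(\calA)\otimes 1$ corresponding to $a:=(\id\otimes\pi)^{(n)}(x)$, and approximate $x$ by the elements $x_\lambda:=y+a\otimes e_\lambda\in M_n(\calA\otimes\calD)$. Two estimates in opposite directions must then be combined. First, since $\nu(e_\lambda)\to q$ in $\sot$, one has $(\mu\cdot\nu)^{(n)}(x_\lambda)\to(\mu\cdot\nu)^{(n)}(x)$ in $\sot$ on $q\calH$, so by lower semicontinuity of the operator norm the $q$-corner is at most $\liminf_\lambda\|(\mu\cdot\nu)^{(n)}(x_\lambda)\|\le\liminf_\lambda\|x_\lambda\|_{\max}$, and the hypothesis replaces this by $\liminf_\lambda\|x_\lambda\|_{\min}$. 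Second, realizing $\calA\otimes_{\min}\calD^+$ spatially with $\calD$ acting non-degenerately, right multiplication of $x$ by $1\otimes e_\lambda$ gives $x_\lambda=x(1\otimes e_\lambda)+y(1\otimes(1-e_\lambda))$, and since each $d_i(1-e_\lambda)\to 0$ in norm the second summand tends to $0$ in norm; hence $\limsup_\lambda\|x_\lambda\|_{\min}\le\|x\|_{\min}$. Combining the two estimates bounds the $q$-corner, and therefore $\|(\mu\cdot\nu)^{(n)}(x)\|$, by $\|x\|_{\min}$; taking the supremum over all admissible pairs yields $\|x\|_{\max}\le\|x\|_{\min}$.

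I anticipate the main subtlety to be precisely the juggling of these inequalities: lower semicontinuity of the norm naturally controls the $\calD^+$-object from below, so the genuine upper bound on $\|x_\lambda\|_{\min}$ must come from the separate norm-continuity argument in which the approximate unit acts on the right. A secondary technical point is justifying that $\nu$ may be taken to be an honest unital $*$-homomorphism and that the cut-down projection $q$ is central for the relevant algebras; both follow from the ideal structure of $\calD\tleq\calD^+$ together with the $C^*_{\max}(\calA)$-picture of the maximal tensor norm.
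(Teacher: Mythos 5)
Your proposal is correct, but the forward implication is argued along a genuinely different route from the paper's. The paper disposes of that direction in two lines: it places the two short exact sequences $0 \to \calA \otimes_{\tau} \calD \to \calA \otimes_{\tau} \calD^+ \to \calA \to 0$ (for $\tau \in \{\min,\max\}$, exactness coming from Proposition~\ref{proposition: unital opalg tensor cstaralg is contained in unitization}) into a commuting diagram and invokes the Five-Lemma for operator algebras of Blecher--Royce, so that complete isometry of the outer vertical arrows forces complete isometry of the middle one. What you do instead is essentially re-prove the relevant instance of that Five Lemma by hand: you test the max norm of $x \in M_n(\calA \otimes \calD^+)$ against a commuting pair $(\mu,\nu)$, split $\calH$ by the central projection $q$ onto $\overline{\nu(\calD)\calH}$, handle the degenerate corner via the quotient $\calD^+/\calD \cong \C$, and control the non-degenerate corner by approximating $x$ with $x_\lambda = y + a \otimes e_\lambda \in M_n(\calA \otimes \calD)$, playing SOT lower semicontinuity of the norm (which lets you import the hypothesis $\|x_\lambda\|_{\max} = \|x_\lambda\|_{\min}$) against the norm-convergence $y(1 \otimes (1-e_\lambda)) \to 0$ in the minimal spatial picture. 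All the steps check out: $\nu$ may indeed be taken to be a unital $*$-homomorphism via the $C^*_{\max}(\calA)$-picture of the maximal norm, $q$ commutes with both ranges because $\calD \tleq \calD^+$ and $q = \sot\text{-}\lim \nu(e_\lambda)$, and the two limiting estimates combine in the right order. The trade-off is clear: the paper's argument is shorter but outsources the content to the cited Five Lemma, whereas yours is longer but self-contained modulo standard facts about approximate units and the two tensor norms, and it makes visible exactly where the ideal structure of $\calD$ in $\calD^+$ is used. The easy direction is handled identically in both proofs, via the completely isometric inclusions from Proposition~\ref{proposition: unital opalg tensor cstaralg is contained in unitization}.
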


\begin{proof}
If $\calA \otimes_{\min} \calD^+=\calA \otimes_{\max} \calD^+$, then applying Proposition \ref{proposition: unital opalg tensor cstaralg is contained in unitization} shows that $\calA \otimes_{\tau} \calD \subseteq \calA \otimes_{\tau} \calD^+$ completely isometrically for $\tau \in \{\min,\max\}$. It immediately follows that $\calA \otimes_{\min} \calD=\calA \otimes_{\max} \calD$.

Conversely, suppose that $\calA \otimes_{\min} \calD=\calA \otimes_{\max} \calD$. Using the canonical maps, the following diagram commutes:
\begin{center}
$\begin{tikzcd}
0 \rar & \calA \otimes_{\max} \calD \dar \rar & \calA \otimes_{\max} \calD^+ \dar \rar & \calA \dar \rar & 0 \\
0 \rar & \calA \otimes_{\min} \calD \rar & \calA \otimes_{\min} \calD^+ \rar & \calA \rar & 0
\end{tikzcd}$
\end{center}
By Proposition \ref{proposition: unital opalg tensor cstaralg is contained in unitization}, the rows are exact.  By the Five-Lemma for operator algebras \cite[Lemma 3.2]{blecherroyce}, since the outer two vertical arrows are complete isometries, the middle arrow is a complete isometry. Thus, $\calA \otimes_{\min} \calD^+=\calA \otimes_{\max} \calD^+$.
\end{proof}

\begin{lemma}
  Suppose that $\calA$ is a unital operator algebra for which for every unital $C^{\ast}$-algebra $\calC$, $\calA \otimes_{\min} \calC = \calA \otimes_{\max} \calC$ if and only if $\calC$ is a nuclear $C^{\ast}$-algebra. Then for every non-unital $C^{\ast}$-algebra $\calD$, $\calA \otimes_{\min} \calD = \calA \otimes_{\max} \calD$ if and only if $\calD$ is a nuclear $C^{\ast}$-algebra.
\end{lemma}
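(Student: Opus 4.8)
The plan is to reduce the non-unital case to the already-established unital case by passing to the minimal unitization, using Lemma~\ref{lemma: tensor from nonunital cstar to unitization} as the bridge and the standard permanence of nuclearity under unitization to pass between $\calD$ and $\calD^+$.

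First I would fix a non-unital $C^*$-algebra $\calD$ and let $\calD^+$ denote its minimal unitization, which is a \emph{unital} $C^*$-algebra. By Lemma~\ref{lemma: tensor from nonunital cstar to unitization}, we have $\calA \otimes_{\min} \calD = \calA \otimes_{\max} \calD$ if and only if $\calA \otimes_{\min} \calD^+ = \calA \otimes_{\max} \calD^+$. This converts the hypothesis-free statement about the non-unital algebra $\calD$ into an equivalent statement about the unital algebra $\calD^+$, to which the standing hypothesis on $\calA$ applies directly.

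Next I would invoke that hypothesis: since $\calD^+$ is unital, the equality $\calA \otimes_{\min} \calD^+ = \calA \otimes_{\max} \calD^+$ holds if and only if $\calD^+$ is nuclear. It then remains only to relate nuclearity of $\calD^+$ to nuclearity of $\calD$. This is a classical permanence property: for the short exact sequence $0 \to \calD \to \calD^+ \to \C \to 0$, nuclearity passes both to and from the ideal and the quotient, and since the quotient $\C$ is nuclear, $\calD^+$ is nuclear if and only if $\calD$ is nuclear (see \cite{BrownAndOzawa}).

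Chaining these three equivalences yields $\calA \otimes_{\min} \calD = \calA \otimes_{\max} \calD$ if and only if $\calD$ is nuclear, completing the argument. I do not expect a genuine obstacle here: the analytic content, namely the stability of the tensorial equality under unitization, has already been isolated in Lemma~\ref{lemma: tensor from nonunital cstar to unitization}, so the remaining steps are formal. The only point requiring care is citing the correct form of the nuclearity--unitization equivalence, which is standard in the theory of nuclear $C^*$-algebras.
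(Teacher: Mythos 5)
Your proposal is correct and follows exactly the same route as the paper: reduce to the unital case via Lemma~\ref{lemma: tensor from nonunital cstar to unitization}, apply the hypothesis to $\calD^+$, and use the standard fact that $\calD$ is nuclear if and only if $\calD^+$ is nuclear. There is nothing to add.
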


\begin{proof}
  Let $\calD$ be a non-unital $C^{\ast}$-algebra. Let $\calD^+$ be its minimal unitization. We know that $\calD$ is nuclear if and only if $\calD^+$ is nuclear. Thus, $\calD$ is nuclear if and only if $\calA \otimes_{\min} \calD^+ = \calA \otimes_{\max} \calD^+$. By Lemma \ref{lemma: tensor from nonunital cstar to unitization}, the latter condition is equivalent to having $\calA \otimes_{\min} \calD = \calA \otimes_{\max} \calD$, as desired.
\end{proof}

We are now in a position to prove Theorem \ref{Theorem: intro counterexample problem 2 KR}, which gives a counterexample to the second problem of Katsoulis and Ramsey.

\begin{theorem}
\label{KRexamplelocallycompactgroup}
  Suppose that $G$ is a locally compact group such that $C^*_\lambda(G)$ admits a tracial state. Let $\calA := \calU(\cW_{3,2})$ be the operator subalgebra of $M_{12}(\C)$ endowed with the trivial $G$-action $\id: G \acts \calA$. The following are equivalent:
  \begin{enumerate}
    \item $\calA \rtimes_{C^*_{env}(\calA),\id} G = \calA \rtimes_{C^*_u(\calA),\id} G$.
    \item The group $G$ is amenable.
  \end{enumerate}
\end{theorem}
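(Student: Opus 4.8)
The plan is to collapse condition (1) into a single operator‑algebra tensor identity and then feed that identity into the nuclearity detector. Since the action is trivial, Example \ref{example: opalg trivial actions} identifies $\calA \rtimes_{C^*_u(\calA),\id} G$ with $\calA \otimes_{\max} C^*(G)$, sitting completely isometrically inside $C^*_u(\calA) \otimes_{\max} C^*(G)$ by \cite[6.1.9]{blecherlemerdy}. For the enveloping crossed product the key observation is that $C^*_{\env}(\calA)=C^*_{\env}(\calU(\cW_{3,2}))$ is nuclear, so $C^*_{\env}(\calA) \otimes_{\max} C^*(G)=C^*_{\env}(\calA) \otimes_{\min} C^*(G)$; hence the span of the $a u_g$ inside $C^*_{\env}(\calA)\rtimes_{\id} G$ carries the spatial norm, giving $\calA \rtimes_{C^*_{\env}(\calA),\id} G=\calA \otimes_{\min} C^*(G)$. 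The canonical $*$-homomorphism $C^*_u(\calA)\to C^*_{\env}(\calA)$ induces the canonical completely contractive map $\calA \otimes_{\max} C^*(G) \to \calA \otimes_{\min} C^*(G)$ fixing $\calA \otimes C^*(G)$ pointwise, and condition (1) holds exactly when this map is a complete isometry. Thus the whole theorem reduces to showing that $\calA \otimes_{\min} C^*(G)=\calA \otimes_{\max} C^*(G)$ if and only if $G$ is amenable.

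Granting this reduction, the implication (2) $\Rightarrow$ (1) is short. If $G$ is amenable then $C^*(G)$ is nuclear, so $C^*_u(\calA)\otimes_{\max} C^*(G)=C^*_u(\calA)\otimes_{\min} C^*(G)$; restricting the norm to the copy of $\calA \otimes C^*(G)$ forces the maximal operator‑algebra norm to agree with the spatial one, whence $\calA \otimes_{\min} C^*(G)=\calA \otimes_{\max} C^*(G)$.

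For (1) $\Rightarrow$ (2) I would invoke the detector. By Lemma \ref{lemma: operator system nuclearity detectors to operator algebra nuclearity detectors}, $\calA=\calU(\cW_{3,2})$ is a nuclearity detector, and by the non‑unital extension of that property (the unlabelled lemma following Lemma \ref{lemma: tensor from nonunital cstar to unitization}) it detects nuclearity of an arbitrary, possibly non‑unital, $C^*$-algebra. Applying this to $\calD=C^*(G)$, the identity $\calA \otimes_{\min} C^*(G)=\calA \otimes_{\max} C^*(G)$ forces $C^*(G)$, and hence its quotient $C^*_\lambda(G)$, to be nuclear.

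I expect the crux, and the sole place the tracial‑state hypothesis is used, to be the final implication: \emph{nuclearity of $C^*_\lambda(G)$} $\Rightarrow$ \emph{amenability of $G$}. This is genuinely false for general locally compact $G$ — for instance $C^*_\lambda(\mathrm{SL}_2(\R))$ is type I, hence nuclear, while $\mathrm{SL}_2(\R)$ is non‑amenable — so some extra assumption is unavoidable, and the tracial‑state condition is precisely what rules such groups out, since $C^*_\lambda(\mathrm{SL}_2(\R))$ carries no tracial state. The argument I would give is that a tracial state $\tau$ on the nuclear algebra $C^*_\lambda(G)$ produces, via the GNS construction, a \emph{finite} von Neumann algebra $M=\pi_\tau(C^*_\lambda(G))''$ which nuclearity forces to be injective, hence hyperfinite by Connes' theorem; transporting this back through the canonical trace on $L(G)$ then yields an invariant mean and so the amenability of $G$. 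This trace‑to‑amenability step, rather than the tensor‑product bookkeeping, is the main obstacle, and it explains why the statement is restricted to groups for which $C^*_\lambda(G)$ admits a tracial state.
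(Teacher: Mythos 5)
Your reduction coincides with the paper's: nuclearity of $C^*_{\env}(\calA)$ identifies $\calA \rtimes_{C^*_{\env}(\calA),\id} G$ with $\calA \otimes_{\min} C^*(G)$, Example \ref{example: opalg trivial actions} identifies $\calA \rtimes_{C^*_u(\calA),\id} G$ with $\calA \otimes_{\max} C^*(G)$, and Lemma \ref{lemma: operator system nuclearity detectors to operator algebra nuclearity detectors} (together with the fact that nuclearity passes to quotients) turns condition (1) into nuclearity of $C^*(G)$ and hence of $C^*_{\lambda}(G)$. Your direct argument for $(2)\Rightarrow(1)$ via nuclearity of $C^*(G)$ is also sound; the paper simply cites Katsoulis--Ramsey there.

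The genuine gap is in your final step. The paper closes by citing \cite[Corollary 3.3]{FSW}, which states exactly that $G$ is amenable if and only if $C^*_{\lambda}(G)$ is nuclear and admits a tracial state; you instead sketch a proof of this implication, and the sketch does not hold up. A tracial state $\tau$ does produce a finite, injective, hence hyperfinite von Neumann algebra $\pi_{\tau}(C^*_{\lambda}(G))''$, but for a non-discrete locally compact group there is no ``canonical trace on $L(G)$'' through which to transport this information --- $L(G)$ need not be a finite von Neumann algebra at all --- and, as your own $\mathrm{SL}_2(\R)$ example already shows, injectivity (or hyperfiniteness) of the group von Neumann algebra does not imply amenability of a locally compact group, so no argument of the form ``hyperfinite group von Neumann algebra $\Rightarrow$ invariant mean'' can close the loop. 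Moreover, the GNS von Neumann algebra of an arbitrary tracial state on $C^*_{\lambda}(G)$ can be far smaller than $L(G)$ and need not retain enough of $G$ to produce a mean. The interaction between the trace and nuclearity is precisely the nontrivial content of the Forrest--Spronk--Wiersma theorem; you should invoke that result (as the paper does) rather than attempt to reprove it in a few lines.
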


\begin{proof}
  The implication $(2) \to (1)$ holds by \cite[Theorem 3.14]{katsoulisramsey}. Conversely, suppose we have the identity $\calA \rtimes_{C^*_{env}(\calA),\id} G = \calA \rtimes_{C^*_u(\calA),\id} G$. Since $C_{\env}^*(\calA)$ is nuclear,
  \begin{align*}
    C^*_{env}(\calA) \otimes_{\min} C^*(G) = C^*_{env}(\calA) \otimes_{\max} C^*(G)=C_{\env}^*(\calA) \rtimes_{\id} G\;.
  \end{align*}
  Thus, $\calA \rtimes_{C_{\env}^*(\calA),\id} G$ is completely isometrically isomorphic to the completion of $\calA \otimes C^*(G)$ in $C_{\env}^*(\calA) \otimes_{\min} C^*(G)$. We conclude that $$\calA \rtimes_{C_{\env}^*(\calA,\id)} G=\calA \otimes_{\min} C^*(G).$$
  On the other hand, by Example \ref{example: opalg trivial actions}, we have $$\calA \rtimes_{C_u^*(\calA),\id} G=\calA \otimes_{\max} C^*(G).$$
  Then we have the identity
  \begin{align*}
    \calA \otimes_{\min} C^*(G) = \calA \rtimes_{C^*_{env}(\calA),\id} G  = \calA \rtimes_{C^*_u(\calA),\id} G =  \calA \otimes_{max} C^*(G)\;.
  \end{align*}
  Since $\calA$ is a nuclearity detector, $C^*(G)$ is nuclear. In particular, since nuclearity of $C^*$-algebras passes to quotients \cite{choieffros}, it follows that $C^*_\lambda(G)$, which  is a quotient of $C^*(G)$, must be nuclear. By \cite[Corollary 3.3]{FSW}, $G$ is amenable if and only if $C^*_\lambda(G)$ is nuclear and $C^*_\lambda(G)$ admits a tracial state. Therefore, $G$ is amenable.
\end{proof}

Theorem \ref{KRexamplelocallycompactgroup} allows for a proof of Theorem \ref{Theorem: intro counterexample problem 1 KR}, which gives a counterexample to the first problem of Katsoulis and Ramsey.

\begin{theorem}
\label{KRproblem1}
  Let $\calA = \calU(\cW_{3,2})$. If $G$ is a discrete group, then the following are equivalent:
  \begin{enumerate}
    \item We have the identity $C^*_{env}(\calA) \rtimes_{id} G = C^*_{env}(\calA \rtimes_{\id} G)$.
    \item The group $G$ is amenable.
  \end{enumerate}
\end{theorem}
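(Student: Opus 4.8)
The plan is to reduce condition (1) to the statement that the full and full enveloping crossed products of $\calA$ by the trivial action coincide, and then to quote Theorem \ref{KRexamplelocallycompactgroup}. Write $\calC_{\env} := C^*_{\env}(\calA)$, which is nuclear, and recall that $\calA$ is hyperrigid in $\calC_{\env}$. By Example \ref{example: opalg trivial actions} the full crossed product is $\calB_1 := \calA \rtimes_{\id} G \simeq \calA \otimes_{\max} C^*(G)$, while the computation in the proof of Theorem \ref{KRexamplelocallycompactgroup} identifies the full enveloping crossed product as $\calB_2 := \calA \rtimes_{\calC_{\env},\id} G \simeq \calA \otimes_{\min} C^*(G)$, using that $\calC_{\env}$ is nuclear so that $\calC_{\env} \otimes_{\min} C^*(G) = \calC_{\env} \otimes_{\max} C^*(G) = \calC_{\env} \rtimes_{\id} G$. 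The canonical completely contractive homomorphism $q : \calB_1 \to \calB_2$ induced by the quotient $C^*_u(\calA) \to \calC_{\env}$ is the identity on $\calA \otimes C^*(G)$, and $q$ is a complete isometry exactly when $\calB_1 = \calB_2$, which by Theorem \ref{KRexamplelocallycompactgroup} happens iff $G$ is amenable (the hypothesis there is met since $G$ is discrete, so $C^*_\lambda(G)$ carries its canonical faithful tracial state).

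Next I would record the two structural facts that organize the argument. First, since $\calA$ is hyperrigid, the operator-algebra analogue of Theorem \ref{theorem: hyperrigidity preserved by full enveloping} gives $C^*_{\env}(\calB_2) = \calC_{\env} \rtimes_{\id} G$. Second, by \cite[Theorem 4.1]{katsoulisramsey} the universal $C^*$-cover of the full crossed product is $C^*_u(\calB_1) = C^*_u(\calA) \rtimes_{\id} G$, and the quotient $C^*_u(\calA) \to \calC_{\env}$ induces a surjective $*$-homomorphism $\pi : C^*_u(\calA) \rtimes_{\id} G \to \calC_{\env} \rtimes_{\id} G$ whose restriction to $\calB_1$ is exactly $q$. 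Consequently both $C^*$-algebras appearing in (1) are canonical quotients of the common cover $C^*_u(\calB_1)$: the envelope $C^*_{\env}(\calB_1) = C^*_u(\calB_1)/\calJ$ by the Shilov ideal $\calJ$ of $\calB_1$, and $\calC_{\env} \rtimes_{\id} G = C^*_u(\calB_1)/\ker \pi$. Thus condition (1) is precisely the equality of ideals $\calJ = \ker\pi$.

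With this in hand the equivalence is short. For $(2) \Rightarrow (1)$: if $G$ is amenable then Theorem \ref{KRexamplelocallycompactgroup} gives $\calB_1 = \calB_2$, hence $C^*_{\env}(\calB_1) = C^*_{\env}(\calB_2) = \calC_{\env} \rtimes_{\id} G$, which is (1). For $(1) \Rightarrow (2)$: assuming $\calJ = \ker\pi$, the embedding $\calB_1 \hookrightarrow C^*_u(\calB_1)/\calJ = C^*_{\env}(\calB_1)$, which is always completely isometric, coincides with the map $\calB_1 \to C^*_u(\calB_1)/\ker\pi = \calC_{\env}\rtimes_{\id}G$, namely with $q$ followed by the completely isometric inclusion $\calB_2 \hookrightarrow \calC_{\env}\rtimes_{\id}G$. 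A completely isometric map cannot factor through the completely contractive $q$ unless $q$ is itself completely isometric, so $q$ is a complete isometry and $\calB_1 = \calB_2$; equivalently $\calA \rtimes_{\calC_{\env},\id} G = \calA \rtimes_{C^*_u(\calA),\id} G$. Theorem \ref{KRexamplelocallycompactgroup} then yields that $G$ is amenable.

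The main obstacle is the bookkeeping that makes condition (1) literally the identity $\calJ = \ker\pi$ inside the common universal cover: this rests on the full-enveloping envelope identity $C^*_{\env}(\calB_2) = \calC_{\env}\rtimes_{\id}G$ (where hyperrigidity of $\calA$ enters) together with \cite[Theorem 4.1]{katsoulisramsey} realizing $C^*_u(\calB_1) = C^*_u(\calA)\rtimes_{\id}G$. Once both $C^*$-algebras are presented as quotients of the same cover, the observation that equal Shilov and kernel ideals force the completely contractive comparison map $q$ to be a complete isometry is exactly what collapses Problem 1 to Problem 2, after which Theorem \ref{KRexamplelocallycompactgroup} closes the argument.
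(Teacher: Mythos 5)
Your proof is correct, and the essential content of the $(1)\Rightarrow(2)$ direction is the same as the paper's: condition (1), read via the canonical maps, forces the comparison map $q:\calA\rtimes_{C_u^*(\calA),\id}G\to\calA\rtimes_{C_{\env}^*(\calA),\id}G$ to be completely isometric (the paper phrases this as ``$\calA \rtimes_{C_u^*(\calA),\id} G$ embeds faithfully and canonically into $C^*_{\env}(\calA) \rtimes_{\id} G$''), after which Theorem \ref{KRexamplelocallycompactgroup} gives amenability. Your Shilov-ideal bookkeeping inside the common cover $C_u^*(\calA)\rtimes_{\id}G$ is a more explicit justification of the same step, and the factorization argument ($j\circ q$ completely isometric with $j$ completely isometric forces $q$ completely isometric) is sound. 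Where you genuinely diverge is in $(2)\Rightarrow(1)$: the paper simply cites Katsoulis \cite[Theorem 2.5]{katsoulis17}, which holds for arbitrary operator algebra dynamical systems over amenable groups, whereas you derive it from the amenable case of Theorem \ref{KRexamplelocallycompactgroup} together with the identity $C_{\env}^*(\calA\rtimes_{C_{\env}^*(\calA),\id}G)=C_{\env}^*(\calA)\rtimes_{\id}G$, which you obtain from hyperrigidity of $\calA$ via \cite[Theorem 2.7]{katsoulis17}. This buys a more self-contained argument tailored to $\calU(\calW_{3,2})$, but at the cost of invoking Theorem \ref{Theorem: hyperrigidity of counterexample}, which the paper only proves in Section 6, after this theorem; there is no circularity (the hyperrigidity proof does not use this result), but if you follow your route you should flag the forward reference or reorder the material.
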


\begin{proof}
  The proof that (2) implies (1) was done by Katsoulis \cite[Theorem 2.5]{katsoulis17}. For the converse, if $C^*_{env}(\calA)~\rtimes_{\id}~G = C^*_{env}(\calA \rtimes_{\id} G)$, then $\calA \rtimes_{C_u^*(\calA),\id} G$ embeds faithfully and canonically into $C^*_{env}(\calA) \rtimes_{\id} G$. Thus, $\calA \rtimes_{C^*_{env}(\calA), \id} G = \calA \rtimes_{C^*_{u}(\calA),\id} G$. By Theorem \ref{KRexamplelocallycompactgroup}, this implies that $G$ is amenable.
\end{proof}

\section{Hyperrigidity and $\calU(\calW_{3,2})$}

In this final section, we show that $(\min,\ess)$-nuclearity is preserved under the full enveloping crossed product whenever the operator system is hyperrigid.  We also show that $\calU(\calW_{3,2})$ is hyperrigid, which shows that the equation $C_{\env}^*(\calA \rtimes_{\alpha} G)=C_{\env}^*(\calA) \rtimes_{\alpha} G$ can fail even in the case of a hyperrigid operator algebra.

Let $\calS \subseteq \calC$ and $\calT \subseteq \calD$ are hyperrigid operator subsystems of unital $C^*$-algebras $\calC$ and $\calD$, respectively. By injectivity of the minimal tensor product, $\calS \otimes_{\min} \calT \subseteq \calC \otimes_{\min} \calD$. Moreover, by Theorem \ref{theorem: hyperrigid is envelope}, $C_{\env}^*(\calS)=\calC$ and $C_{\env}^*(\calT)=\calD$. In particular, by definition of the essential tensor product, we have that $\calS \otimes_{\ess} \calT \subseteq \calC \otimes_{\max} \calD$. In fact, more is true.

\begin{lemma}
  Suppose that $\calS \subset \calC$ and $\calT \subset \calD$ are hyperrigid operator subsystems of unital $C^{\ast}$-algebras $\calC$ and $\calD$ respectively. Then $\calS \otimes_{\min} \calT \subset \calC \otimes_{\min} \calD$ and $\calS \otimes_{\ess} \calT \subset \calC \otimes_{\max} \calD$ are hyperrigid.
\end{lemma}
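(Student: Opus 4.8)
The plan is to prove both hyperrigidity statements at once. Write $\calA = \calC \otimes_\beta \calD$ and $\calO = \calS \otimes_\gamma \calT \subseteq \calA$, where $(\beta,\gamma)=(\min,\min)$ in the first case and $(\beta,\gamma)=(\max,\ess)$ in the second; in both cases $\calC \otimes 1$ and $1 \otimes \calD$ are unital $C^*$-subalgebras generating $\calA$, and $\calS \otimes 1, 1 \otimes \calT \subseteq \calO$. By Theorem \ref{theorem: hyperrigid is envelope}, hyperrigidity of $\calS$ and $\calT$ gives $\calC = C^*_{\env}(\calS)$ and $\calD = C^*_{\env}(\calT)$, which (together with injectivity of $\otimes_{\min}$ and the definition of $\otimes_{\ess}$) justifies the stated inclusions. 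To show $\calO$ is hyperrigid in $\calA$ I must show that for every unital $*$-representation $\Pi: \calA \to B(\calH)$, the restriction $\Pi|_\calO$ has the unique extension property, equivalently (Dritschel--McCollough) is maximal.

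First I would reduce maximality of $\Pi|_\calO$ to an extension statement on $\calA$: it suffices to show that any ucp map $\Phi: \calA \to B(\calH)$ with $\Phi|_\calO = \Pi|_\calO$ must equal $\Pi$. Indeed, given any ucp dilation $\rho:\calO \to B(\calK)$ of $\Pi|_\calO$ with $\calH \subseteq \calK$, extend $\rho$ by Arveson's extension theorem to a ucp map $\tilde{\rho}:\calA \to B(\calK)$ and set $\Phi := P_\calH\, \tilde{\rho}(\cdot)\,|_\calH$. Then $\Phi|_\calO = \Pi|_\calO$, so $\Phi = \Pi$ is multiplicative, and a Kadison--Schwarz computation (the equality $P_\calH \tilde{\rho}(a^*a) P_\calH = P_\calH\tilde{\rho}(a)^* P_\calH \tilde{\rho}(a) P_\calH$ forcing $(I-P_\calH)\tilde{\rho}(a)P_\calH = 0$ for every $a$) shows that $\calH$ reduces $\tilde{\rho}$ with $\tilde{\rho}|_\calH = \Pi$; hence the original dilation $\rho = \tilde{\rho}|_\calO$ is trivial. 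This is the one place I would write out the short Schwarz-inequality estimate. Crucially, this step takes place entirely inside $\calA$ and does not presuppose knowledge of $C^*_{\env}(\calO)$.

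The heart of the argument is then the implication $\Phi = \Pi$, proved via multiplicative domains. Since $\calS \otimes 1 \subseteq \calO$, the ucp map $\Phi|_{\calC \otimes 1}:\calC \to B(\calH)$ extends $\Pi|_{\calS \otimes 1}$. But $\Pi|_{\calC \otimes 1}$ is a $*$-representation of $\calC = C^*_{\env}(\calS)$, and hyperrigidity of $\calS$ says $(\Pi|_{\calC \otimes 1})|_\calS$ has the unique extension property, so its only ucp extension to $\calC$ is $\Pi|_{\calC \otimes 1}$ itself. Therefore $\Phi|_{\calC \otimes 1} = \Pi|_{\calC \otimes 1}$ is a $*$-homomorphism, placing $\calC \otimes 1$ in the multiplicative domain $\calM_\Phi$; symmetrically, hyperrigidity of $\calT$ places $1 \otimes \calD$ in $\calM_\Phi$. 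As $\calM_\Phi$ is a $C^*$-subalgebra containing the generating set $\{\calC \otimes 1,\, 1 \otimes \calD\}$, we get $\calM_\Phi = \calA$, so $\Phi$ is a $*$-homomorphism agreeing with $\Pi$ on generators, whence $\Phi = \Pi$. Combined with the previous paragraph, $\Pi|_\calO$ is maximal for every $\Pi$, i.e. $\calO$ is hyperrigid in $\calA$.

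The point requiring the most care is the first reduction: the unique extension property is defined via extensions to $C^*_{\env}(\calO)$, which we do not yet know equals $\calA$, so a naive invocation of the definition would be circular. I avoid this by working with the dilation-theoretic notion of maximality and lifting dilations up to $\calA$, after which the multiplicative-domain computation is routine and the identical bookkeeping covers both the minimal and essential cases. As a by-product, once hyperrigidity is established, Theorem \ref{theorem: hyperrigid is envelope} yields $C^*_{\env}(\calS \otimes_{\min} \calT) = \calC \otimes_{\min} \calD$ and $C^*_{\env}(\calS \otimes_{\ess} \calT) = \calC \otimes_{\max} \calD$.
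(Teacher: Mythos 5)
Your proof is correct and follows essentially the same route as the paper's: hyperrigidity of $\calS$ and $\calT$ forces any ucp extension $\Phi$ of $\Pi|_{\calS \otimes \calT}$ to agree with $\Pi$ on $\calC \otimes 1$ and $1 \otimes \calD$, placing both in the multiplicative domain of $\Phi$, so $\Phi=\Pi$ on the generated algebra. Your preliminary dilation-theoretic reduction (lifting a dilation of $\Pi|_{\calO}$ to a ucp map on $\calA$ and running the Schwarz-inequality argument) is just an explicit rendering of the standard equivalence between maximality and the unique extension property that the paper invokes implicitly, so it adds care rather than a genuinely different argument.
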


\begin{proof}
  We prove the result for the minimal tensor product; the proof for the essential tensor product is the same. Let $\iota_\calC : \calC \to \calC \otimes_{\min} \calD : a \mapsto a \otimes 1$ and $\iota_\calD : \calD \to \calC \otimes_{\min} \calD : b \mapsto 1 \otimes b$ be the canonical *-homomorphisms. Suppose that $\pi: \calC \otimes_{\min} \calD \to B(\calH)$ is a $*$-representation and suppose that $\rho: \calC \otimes_{\min} \calD \to B(\calH)$ is a ucp map extending the map $\pi|_{\calS \otimes_{\min} \calT}$. Since $\rho \circ \iota_\calC$ agrees with $\pi \circ \iota_\calC$ on $\calS$, by hyperrigidity of $\calS$, $\rho \circ \iota_\calC = \pi \circ \iota_\calC$. Similarly, $\rho \circ \iota_\calD = \pi \circ \iota_\calD$. For any $a \in \calC$ and $b \in \calD$, since $\iota_\calD(b)$ is in the multiplicative domain of $\rho$,
  \begin{align*}
    \rho(a \otimes b) = \rho(\iota_\calC(a) \iota_\calD(b)) = \rho(\iota_\calC(a)) \rho(\iota_\calD(b)) = \pi(a \otimes b)\;.
  \end{align*}
  Extending by linearity and continuity shows that $\rho=\pi$.
\end{proof}

The following proposition is a generalization of a result of Gupta and Luthra \cite[Theorem 4.3]{guptaluthra}.
\begin{proposition}\label{prop: nuclear passes to envelope}
  Suppose that $\calS \subset \calC$ is a hyperrigid operator system. Then $\calS$ is $(\min,\ess)$-nuclear if and only if $\calC$ is nuclear.
\end{proposition}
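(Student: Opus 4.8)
The plan is to first record the structural fact that makes everything run: since $\calS$ is hyperrigid in $\calC$, Theorem~\ref{theorem: hyperrigid is envelope} gives $\calC = C^*_{\env}(\calS)$. This is precisely the hypothesis under which the essential tensor product of $\calS$ is governed by $\calC$, and it is what allows the preceding lemma to be applied. I would then establish the two implications separately.

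For the direction ``$\calC$ nuclear $\Rightarrow$ $\calS$ is $(\min,\ess)$-nuclear,'' I would fix an arbitrary operator system $\calT$ and set $\calD := C^*_{\env}(\calT)$. Injectivity of the minimal tensor product yields a complete order embedding $\calS \otimes_{\min} \calT \subseteq \calC \otimes_{\min} \calD$, while by the very definition of the essential tensor product $\calS \otimes_{\ess} \calT \subseteq \calC \otimes_{\max} \calD$. Since $\calC$ is nuclear, the two ambient $C^*$-algebras $\calC \otimes_{\min} \calD$ and $\calC \otimes_{\max} \calD$ coincide, and in both cases the copy of $\calS \otimes \calT$ is literally the same subspace carrying the same inherited matrix order. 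Hence $\calS \otimes_{\min} \calT = \calS \otimes_{\ess} \calT$, and as $\calT$ was arbitrary, $\calS$ is $(\min,\ess)$-nuclear. This half is essentially the Gupta--Luthra argument and I expect it to be routine.

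For the converse, suppose $\calS$ is $(\min,\ess)$-nuclear and let $\calD$ be any unital $C^*$-algebra regarded as an operator system, so that $C^*_{\env}(\calD) = \calD$ and $\calD$ is trivially hyperrigid in itself. Nuclearity gives that the identity on $\calS \otimes \calD$ is a complete order isomorphism $\calS \otimes_{\min} \calD \to \calS \otimes_{\ess} \calD$. The key step is to apply the preceding lemma with $\calT = \calD$: it shows $\calS \otimes_{\min} \calD$ is hyperrigid in $\calC \otimes_{\min} \calD$ and $\calS \otimes_{\ess} \calD$ is hyperrigid in $\calC \otimes_{\max} \calD$, so by Theorem~\ref{theorem: hyperrigid is envelope} these ambient algebras are the respective $C^*$-envelopes. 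Since a unital complete order isomorphism of operator systems lifts to a $*$-isomorphism of their $C^*$-envelopes, the identity on $\calS \otimes \calD$ extends to a $*$-isomorphism $\calC \otimes_{\min} \calD \to \calC \otimes_{\max} \calD$. I would then check this isomorphism fixes the generating subspace $\calS \otimes \calD$ and hence agrees with the canonical map, giving $\calC \otimes_{\min} \calD = \calC \otimes_{\max} \calD$; ranging over all such $\calD$ shows $\calC$ is nuclear.

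The main obstacle I anticipate is in this converse direction: ensuring that the abstract $*$-isomorphism produced by functoriality of the $C^*$-envelope is genuinely the canonical identification of $\calC \otimes \calD$, rather than some unrelated isomorphism. This requires verifying that it fixes $\calS \otimes \calD$ and then extends multiplicatively to the dense $*$-subalgebra generated by $\calS$ and $\calD$ (which is dense since $\calS$ generates $\calC$ and $\calD$ generates itself). A secondary, harmless point is the reduction to \emph{unital} $\calD$: nuclearity of $\calC$ is equivalent to $\calC \otimes_{\min} \calD = \calC \otimes_{\max} \calD$ for all unital $\calD$, since passing to a minimal unitization relates the non-unital case to the unital one.
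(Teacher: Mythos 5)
Your proof is correct and follows essentially the same route as the paper: the forward direction is the Gupta--Luthra observation, and the converse identifies $\calC \otimes_{\min} \calD$ and $\calC \otimes_{\max} \calD$ as the $C^*$-envelopes of $\calS \otimes_{\min} \calD$ and $\calS \otimes_{\ess} \calD$ via hyperrigidity and then uses $(\min,\ess)$-nuclearity. The only (cosmetic) differences are that you invoke the preceding lemma with $\calT = \calD$ where the paper re-runs the multiplicative-domain argument inline, and that you spell out the check--glossed over in the paper--that the induced $*$-isomorphism of envelopes agrees with the canonical quotient because both are $*$-homomorphisms fixing the generating subspace $\calS \otimes \calD$.
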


\begin{proof}
  If $\calC$ is nuclear, then by \cite[Proposition 4.2]{guptaluthra}, $\calS$ is $(\min,\ess)$-nuclear. Conversely, suppose that $\calS$ is $(\min,\ess)$-nuclear, and let $\calD$ be any unital $C^*$-algebra. First, let us show that $C^*_{\env}(\calS \otimes_{\min} \calD) = \calC \otimes_{\min} \calD$ and $C^*_{\env}(\calS \otimes_{\ess} \calD) = \calC \otimes_{\max} \calD$. By Theorem \ref{theorem: hyperrigid is envelope}, it suffices to show that $\calS \otimes_{\min} \calD$ and $\calS \otimes_{\max} \calD$ are hyperrigid in their $C^{\ast}$-covers. For the minimal case, suppose that
  \begin{align*}
  	\pi: \calC \otimes_{\stmin} \calD \to B(H)
  \end{align*}
  is a unital $*$-homomorphism and suppose that $\phi$ is the restriction of $\pi$ to $\calS \otimes_{\min} \calD$. If $\rho$ is any ucp extension of $\phi$ to $\calC \otimes_{\min} \calD$, then the ucp map $\rho|_\calC$ is a ucp extension of $\phi|_\calS$. By hyperrigidity of $\calS$, $\rho|_\calC= \pi$. As well, $\rho|_\calD = \phi|_\calD =\pi|_\calD$. Thus, $\calC \otimes 1$ and $1 \otimes \calD$ are in the multiplicative domain of $\rho$. Therefore, for all $c \in \calC$ and $d \in \calD$,
  $$\rho(c \otimes d)=\rho(c \otimes 1)\rho(1 \otimes d)=\pi(c \otimes 1)\pi(1 \otimes d)=\pi(c \otimes d).$$
  Extending by linearity and continuity, we have $\rho = \pi$ on $\calC \otimes_{\min} \calD$. The proof for the maximal tensor product is the same, replacing all tensors with the appropriate type. Hence, $\calS \otimes_{\min} \calD$ is hyperrigid in $\calC \otimes_{\min} \calD$, and $\calS \otimes_{\ess} \calD$ is hyperrigid in $\calC \otimes_{\max} \calD$.

Since $\calS \otimes_{\min} \calD = \calS \otimes_{\ess} \calD$, and both operator systems are hyperrigid, we get
\begin{align*}
	\calC \otimes_{\min} \calD = C^*_{\env}(\calS \otimes_{\min} \calD)= C^*_{\env}(\calS \otimes_{\ess} \calD)=\calC \otimes_{\max} \calD\;.
	\end{align*}
	As $\calD$ was an arbitrary unital $C^*$-algebra, it follows that $\calC$ is nuclear.
\end{proof}

\begin{corollary}
  Suppose that $(\calS, G,\frakg,\alpha)$ is a dynamical system with $\calS$ hyperrigid in $C_{\env}^*(\calS)$ and $G$ amenable. Then $\calS$ is $(\min,\ess)$-nuclear if and only if $\calS \rtimes_{\alpha,\env} \frakg$ is $(\min,\ess)$-nuclear.
\end{corollary}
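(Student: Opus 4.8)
The plan is to collapse the statement down to the level of the two $C^*$-algebras $C^*_{\env}(\calS)$ and $C^*_{\env}(\calS) \rtimes_\alpha G$ by applying Proposition~\ref{prop: nuclear passes to envelope} twice, and then to invoke the standard behaviour of nuclearity under crossed products by discrete amenable groups. First, since $\calS$ is hyperrigid in $C^*_{\env}(\calS)$ by hypothesis, Proposition~\ref{prop: nuclear passes to envelope} applies directly and tells us that $\calS$ is $(\min,\ess)$-nuclear if and only if $C^*_{\env}(\calS)$ is nuclear.

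Next I would pass to the crossed product. Because $\calS$ is hyperrigid, Theorem~\ref{theorem: hyperrigidity preserved by full enveloping} applies and yields two facts simultaneously: the full enveloping crossed product $\calS \rtimes_{\alpha,\env} \frakg$ is hyperrigid in its $C^*$-envelope, and $C^*_{\env}(\calS \rtimes_{\alpha,\env} \frakg) \simeq C^*_{\env}(\calS) \rtimes_{\alpha} G$. A second application of Proposition~\ref{prop: nuclear passes to envelope}, now to the hyperrigid inclusion $\calS \rtimes_{\alpha,\env} \frakg \subseteq C^*_{\env}(\calS) \rtimes_{\alpha} G$, shows that $\calS \rtimes_{\alpha,\env} \frakg$ is $(\min,\ess)$-nuclear if and only if $C^*_{\env}(\calS) \rtimes_{\alpha} G$ is nuclear. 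Thus the two operator-system nuclearity statements in the corollary have been translated into nuclearity statements for the two $C^*$-algebras.

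It then remains to prove the purely $C^*$-algebraic equivalence: for a discrete amenable group $G$, the algebra $\calC := C^*_{\env}(\calS)$ is nuclear if and only if $\calC \rtimes_{\alpha} G$ is nuclear. Since $G$ is amenable, the full and reduced crossed products coincide, so it suffices to work with $\calC \rtimes_{\alpha,\lambda} G$. For the forward implication I would cite the classical fact that the reduced crossed product of a nuclear $C^*$-algebra by a discrete amenable group is again nuclear. For the converse I would use the canonical faithful conditional expectation $E \colon \calC \rtimes_{\alpha,\lambda} G \to \calC$: composing a completely positive approximation of $\calC \rtimes_{\alpha,\lambda} G$ through matrix algebras with the inclusion $\calC \hookrightarrow \calC \rtimes_{\alpha,\lambda} G$ on one side and with $E$ on the other, and using $E|_\calC = \id_\calC$, shows that nuclearity passes to the range $\calC$ of the expectation. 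Chaining the three equivalences yields the corollary.

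The bulk of the argument is bookkeeping — two invocations of Proposition~\ref{prop: nuclear passes to envelope} framed by Theorem~\ref{theorem: hyperrigidity preserved by full enveloping}. The only place requiring genuine care is the $C^*$-algebraic equivalence of the last paragraph, where the main point is that amenability makes nuclearity ascend to the crossed product and the conditional expectation makes it descend again; both implications are classical, so I expect no real obstacle beyond citing them correctly.
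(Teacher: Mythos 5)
Your argument is correct and follows the paper's proof essentially verbatim: one application of Proposition~\ref{prop: nuclear passes to envelope} to the hyperrigid inclusion $\calS \subseteq C_{\env}^*(\calS)$, Theorem~\ref{theorem: hyperrigidity preserved by full enveloping} to identify $C_{\env}^*(\calS \rtimes_{\alpha,\env} \frakg)$ with $C_{\env}^*(\calS) \rtimes_{\alpha} G$ and transfer hyperrigidity, a second application of the proposition, and the standard fact that nuclearity passes back and forth across a crossed product by a discrete amenable group. The only cosmetic difference is that you sketch the proof of that last $C^*$-algebraic fact (via the conditional expectation), where the paper simply cites \cite[Theorem 4.2.6]{BrownAndOzawa}; your sketch is correct.
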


\begin{proof}
  By Proposition~\ref{prop: nuclear passes to envelope}, $\calS$ is (min,ess)-nuclear if and only if $C^*_{\env}(\calS)$ is nuclear. Since $G$ is discrete and amenable, $C^*_{\env}(\calS)$ is nuclear if and only if $C^*_{\env}(\calS) \rtimes_\alpha G$ is nuclear \cite[Theorem 4.2.6]{BrownAndOzawa}. Using Theorem \ref{theorem: hyperrigidity preserved by full enveloping}, $\calS \rtimes_{\alpha,\env} \frakg$ is hyperrigid in $C_{\env}^*(\calS) \rtimes_{\alpha} G$. Applying Proposition~\ref{prop: nuclear passes to envelope} again, $C^*_{\env}(\calS) \rtimes_\alpha G$ is nuclear if and only if $\calS \rtimes_{\alpha,\env} \frakg$ is $(\min,\ess)$-nuclear.
\end{proof}

We now will work towards showing that $\calU(\calW_{3,2})$ is hyperrigid in its $C^*$-envelope. We begin with the following helpful fact about $C_{\env}^*(\calU(\calS))$.

\begin{proposition}
\label{proposition: cover for S gives cover for U(S)}
Let $\calS$ be an operator system with $C^*$-cover $\calD$.  Then $M_2(\calD)$ is a $C^*$-cover for $\calU(\calS)$. Moreover, $C_{\env}^*(\calU(\calS))=M_2(C_{\env}^*(\calS))$.
\end{proposition}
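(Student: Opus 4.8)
The plan is to prove the two assertions in turn, reducing the $C_{\env}^*$ computation for $\calU(\calS)$ to the fact that $C_{\env}^*(\calS)$ has trivial Shilov ideal. Throughout I fix a faithful representation $\calD \subseteq B(H)$, so that $\calS \subseteq \calD \subseteq B(H)$ and $\calU(\calS) \subseteq M_2(\calD) \subseteq M_2(B(H))$.

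First I would show $M_2(\calD)$ is a $C^*$-cover of $\calU(\calS)$. Since the operator space structure of $\calU(\calS)$ is representation-independent and the inclusion $M_2(\calD) \hookrightarrow M_2(B(H))$ of $C^*$-algebras is completely isometric, the inclusion $\calU(\calS) \hookrightarrow M_2(\calD)$ is a unital completely isometric homomorphism. It then remains to verify that $\calU(\calS)$ generates $M_2(\calD)$ as a $C^*$-algebra. For this, note $\calU(\calS)$ contains $I_2$ (take $\lambda=1,x=0$) and every corner $\begin{bmatrix} 0 & x \\ 0 & 0\end{bmatrix}$ for $x \in \calS$ (take $\lambda=0$). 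Taking adjoints and using $\calS = \calS^*$ produces $\begin{bmatrix} 0 & 0 \\ y & 0\end{bmatrix}$ for $y \in \calS$ as well, and alternating products of these corners, together with the fact that $1_\calD \in \calS$, yield $E_{ij}\otimes(s_1\cdots s_n)$ for all matrix units $E_{ij}$ and all products $s_1\cdots s_n$ of elements of $\calS$. Since such products densely span $\calD$, their matrix analogues densely span $M_2(\calD)$, giving $C^*(\calU(\calS)) = M_2(\calD)$.

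For the envelope, I would apply the first part with $\calD = C_{\env}^*(\calS)$, so $M_2(C_{\env}^*(\calS))$ is a $C^*$-cover of $\calU(\calS)$; it then suffices to show its Shilov ideal (relative to $\calU(\calS)$) is $\{0\}$, since the $C^*$-envelope is the quotient by the Shilov ideal. The structural input is that every closed two-sided ideal of $M_2(\calC)$ has the form $M_2(\calJ)$ for a unique ideal $\calJ \trianglelefteq \calC$, so the corresponding quotient map is $\mathrm{id}_{M_2}\otimes q_\calJ$ with $q_\calJ : C_{\env}^*(\calS) \to C_{\env}^*(\calS)/\calJ$. The crucial observation is that $\calS$ sits completely isometrically inside $\calU(\calS)$ via $x \mapsto \begin{bmatrix} 0 & x \\ 0 & 0\end{bmatrix}$: at each matrix level, $\begin{bmatrix} 0 & X \\ 0 & 0\end{bmatrix}$ has the same norm as $X \in M_n(\calS)$. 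Consequently, if $M_2(\calJ)$ is a boundary ideal for $\calU(\calS)$—i.e. $\mathrm{id}_{M_2}\otimes q_\calJ$ is completely isometric on $\calU(\calS)$—then restricting the isometry to elements supported on this corner shows that $q_\calJ$ is completely isometric on $\calS$, hence (being unital) a complete order embedding of $\calS$. Thus $\calJ$ is a boundary ideal for $\calS$ in $C_{\env}^*(\calS)$, forcing $\calJ = \{0\}$; therefore the Shilov ideal of $\calU(\calS)$ in $M_2(C_{\env}^*(\calS))$ is trivial and $C_{\env}^*(\calU(\calS)) = M_2(C_{\env}^*(\calS))$.

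The main obstacle is the second part, specifically establishing the correspondence between boundary ideals of $\calU(\calS)$ and boundary ideals of $\calS$. Its entire content is the corner-compression observation, which rests on the (standard) fact that $x \mapsto \begin{bmatrix} 0 & x \\ 0 & 0\end{bmatrix}$ embeds $\calS$ completely isometrically into $\calU(\calS)$. Everything else—the classification of ideals of $M_2(\calC)$ and the identification of $C_{\env}^*$ as the quotient by the Shilov ideal—is routine and available from the preliminaries.
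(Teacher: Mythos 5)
Your proof is correct. The first half (that $M_2(\calD)$ is a $C^*$-cover of $\calU(\calS)$) is essentially the paper's argument: both extract the matrix units from unitality of the embedding and then fill in $E_{ij} \otimes a$ for $a$ ranging over products of elements of $\calS$. For the identification of the $C^*$-envelope, however, you take a genuinely different route. The paper starts from the canonical surjection $\rho: M_2(C_{\env}^*(\calS)) \to C_{\env}^*(\calU(\calS))$ given by the universal property, uses simplicity of $M_2$ to see that $\rho$ is faithful on the copy of $M_2$, shows via a canonical shuffle that $\rho(I_2 \otimes C_{\env}^*(\calS))$ is a $C^*$-cover of $\calS$ admitting mutually inverse $*$-homomorphisms to and from $C_{\env}^*(\calS)$, and then recombines the two commuting subalgebras into $M_2(C_{\env}^*(\calS))$. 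You instead compute the Shilov ideal of $\calU(\calS)$ in $M_2(C_{\env}^*(\calS))$ directly, using the classification of closed two-sided ideals of $M_2(\calC)$ as $M_2(\calJ)$ together with the observation that compression to the $(1,2)$-corner embeds $\calS$ completely isometrically into $\calU(\calS)$; this reduces the whole question to the triviality of the Shilov ideal of $\calS$ in its own envelope. Both arguments pivot on the same corner observation, but your route is somewhat more economical: the ideal classification does the work of the paper's explicit construction of the maps $\delta$ and $\eta$. The only points worth making explicit in a final write-up are (i) that a boundary ideal for the operator algebra $\calU(\calS)$ (quotient completely isometric on $\calU(\calS)$) coincides with a boundary ideal for the operator system $\calU(\calS)+\calU(\calS)^*$, which is the setting in which the paper states the Shilov-ideal machinery, and (ii) the standard fact that a unital complete isometry between operator systems is a complete order embedding; both are routine and consistent with identifications the paper already makes elsewhere.
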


\begin{proof}
Let $\varphi:\calS \to \calD$ be a complete order isomorphism, where $\calD$ is a unital $C^*$-algebra such that $\calD=C^*(\varphi(\calS))$. Then there is an associated unital, completely isometric homomorphism $\psi:\calU(\calS) \to M_2(\calD)$ such that $$\psi \left( \begin{bmatrix} \lambda & x \\ 0 & \lambda \end{bmatrix} \right)=\begin{bmatrix} \lambda & \varphi(x) \\ 0 & \lambda \end{bmatrix}$$
for all $\lambda \in \bC$ and for all $x \in \calS$. The matrix units $E_{12},E_{21}$ are in $C^*(\psi(\calU(\calS)))$ since $\varphi$ is unital. Hence, every element of the form $E_{ii} \otimes \varphi(x)$, for $x \in S$, is in $C^*(\psi(\calU(\calS)))$. Thus, $E_{ii} \otimes a \in C^*(\psi(\calU(\calS)))$ for all $a \in \calD$.  Using the matrix units $E_{12}$ and $E_{21}$, we see that $C^*(\psi(\calU(\calS)))=M_2(\calD)$. Thus, $M_2(\calD)$ is a $C^*$-cover for $\calU(\calS)$ whenever $\calD$ is a $C^*$-cover for $\calS$.

Lastly, we must show that $C_{\env}^*(\calU(\calS))=M_2 \left( C_{\env}^*(\calS) \right)$. To this end, we let $\rho:M_2 \left( C_{\env}^*(\calS) \right) \to C_{\env}^*(\calU(\calS))$ be a surjective, unital $*$-homomorphism that preserves the copy of $\calU(\calS)$. Let $\gamma$ be the restriction of $\rho$ to the subalgebra $M_2 \otimes 1_{C_{\env}^*(\calS)}$. Since $\gamma$ is a unital $*$-homomorphism and $M_2$ is simple, $\gamma$ must be injective. In particular, if $\calD=\rho(I_2 \otimes C_{\env}^*(\calS))$, then $M_2$ and $\calD$ are commuting unital $C^*$-subalgebras of $C_{\env}^*(\calU(\calS))$ that generate the whole algebra. Let $\eta$ be the restriction of $\rho$ to $I_2 \otimes \left( C_{\env}^*(\calS) \right)$. Since $\rho$ is a completely isometric homomorphism on $\calU(\calS)$, a standard canonical shuffle argument \cite[p.~97]{paulsenbook} shows that $\rho$ must be a complete isometry when restricted to $E_{12} \otimes \calS$. As $\rho$ is multiplicative, we see that $\eta$ is a complete isometry on $I_2 \otimes \calS$. Thus, $\eta_{|I_2 \otimes \calS}$ maps $\calS$ completely order isomorphically into $\calD$.  Since $I_2 \otimes \calS$ generates $I_2 \otimes C_{\env}^*(\calS)$ as a $C^*$-algebra, the image of $I_2 \otimes \calS$ under $\eta$ generates $\calD$. By the definition of the $C^*$-envelope, there is a unique, surjective unital $*$-homomorphism $\delta:\calD \to C_{\env}^*(\calS)$ such that $\delta(\eta(I_2 \otimes s))=s$ for all $s \in \calS$. The map $\eta:C_{\env}^*(\calS)=I_2 \otimes C_{\env}^*(\calS) \to \calD$ also fixes the copy of $\calS$.  Hence, $\delta \circ \eta$ and $\eta \circ \delta$ are the identity when restricted to $\calS$.  As $C_{\env}^*(\calS)$ and $\calD$ are generated by their respective copies of $\calS$, it follows that $\delta$ and $\eta$ are inverses of each other.  Therefore, $\calD \simeq C_{\env}^*(\calS)$.  We conclude that $C_{\env}^*(\calU(\calS))=M_2(C_{\env}^*(\calS))$.
\end{proof}

The following shows that hyperrigidity of $\calU(S)$ passes to $\calS$.

\begin{proposition}
\label{proposition: hyperrigid passes from Paulsen to system}
  Let $\calD$ be a $C^{\ast}$-cover of $S$. If $\calU(S)$ is hyperrigid in $M_2(\calD)$, then $S$ is hyperrigid in $\calD$.
\end{proposition}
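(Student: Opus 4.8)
The plan is to lift a dilation of $\pi|_\calS$ to a dilation of the amplified representation on $M_2(\calD)$ and then invoke the hyperrigidity of $\calU(\calS)$. First I would pass from the non-selfadjoint algebra $\calU(\calS)$ to the operator system it generates in $M_2(\calD)$, namely
\[
\calU(\calS)+\calU(\calS)^* = \left\{\begin{bmatrix} \nu & x \\ y & \nu\end{bmatrix} : \nu \in \C,\ x,y \in \calS\right\}.
\]
By the bijection between unital completely contractive maps on $\calU(\calS)$ and ucp maps on $\calU(\calS)+\calU(\calS)^*$ (see Remark~\ref{remark: connection to operator algebra, reduced}), the restriction of a $*$-representation to $\calU(\calS)$ has the unique extension property precisely when its restriction to $\calU(\calS)+\calU(\calS)^*$ is maximal. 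Thus the hypothesis says that for every $*$-representation $\sigma$ of $M_2(\calD)$, the map $\sigma|_{\calU(\calS)+\calU(\calS)^*}$ is maximal.

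By the Dritschel--McCollough equivalence of maximality with the unique extension property \cite{dritschelmccullough}, to show $\calS$ is hyperrigid in $\calD$ it suffices to show $\pi|_\calS$ is maximal for every $*$-representation $\pi:\calD \to B(\calH)$. So fix such a $\pi$ together with a dilation $\psi:\calS \to B(\calK)$ of $\pi|_\calS$, where $\calH \subseteq \calK$ and $P_\calH\psi(\cdot)|_\calH = \pi|_\calS$; writing $\psi(x)=\begin{bmatrix}\pi(x) & a_x \\ c_x & b_x\end{bmatrix}$ relative to $\calK=\calH\oplus\calH^\perp$, I must show $a_x=c_x=0$ for all $x$ (recall $c_x=a_{x^*}$). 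I would amplify $\pi$ to the $*$-representation $\pi^{(2)}:M_2(\calD)\to B(\calH^2)$ and define
\[
\Psi:\calU(\calS)+\calU(\calS)^* \to B(\calK^2), \qquad \Psi\begin{bmatrix}\nu & x \\ y & \nu\end{bmatrix}=\begin{bmatrix}\nu I_\calK & \psi(x) \\ \psi(y) & \nu I_\calK\end{bmatrix}.
\]
This $\Psi$ is ucp, being the restriction to $\calU(\calS)+\calU(\calS)^*$ of the amplification $\tilde\psi^{(2)}$ of any ucp extension $\tilde\psi:\calD\to B(\calK)$ of $\psi$ (Arveson's extension theorem), and an entrywise block compression gives $P_{\calH^2}\Psi(\cdot)|_{\calH^2}=\pi^{(2)}|_{\calU(\calS)+\calU(\calS)^*}$, so $\Psi$ is a dilation of $\pi^{(2)}|_{\calU(\calS)+\calU(\calS)^*}$.

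Applying the hypothesis to $\sigma=\pi^{(2)}$, maximality of $\pi^{(2)}|_{\calU(\calS)+\calU(\calS)^*}$ forces $\Psi$ to be trivial, i.e.\ $\calK^2=\calH^2\oplus(\calH^\perp)^2$ reduces $\Psi(w)$ for all $w$. Evaluating on $w=\begin{bmatrix}0 & x \\ 0 & 0\end{bmatrix}$, the coupling between $\calH^2$ and $(\calH^\perp)^2$ in $\Psi(w)$ is carried precisely by the off-diagonal corners $a_x$ and $c_x$ of $\psi(x)$, so triviality forces $a_x=c_x=0$ for every $x\in\calS$; hence $\psi$ is a trivial dilation and $\pi|_\calS$ is maximal. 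As $\pi$ was arbitrary, $\calS$ is hyperrigid in $\calD$. The step requiring the most care is this final one: one must arrange the reordering $\calK^2 = \calH^2\oplus(\calH^\perp)^2$ so that triviality of the amplified dilation $\Psi$ reads off exactly as the vanishing of the off-diagonal corners of the original dilation $\psi$; checking that $\Psi$ is genuinely ucp and dilates $\pi^{(2)}$ is the other point to verify, though it follows cleanly from Arveson extension and a canonical shuffle.
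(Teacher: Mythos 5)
Your argument is correct, but it takes a genuinely different and longer route than the paper's. You use the maximality/dilation formulation: you pass to the operator system $\calU(\calS)+\calU(\calS)^*$ (correctly identified as the matrices with a common scalar on the diagonal and arbitrary entries of $\calS$ off the diagonal), lift a dilation $\psi$ of $\pi|_{\calS}$ to a dilation $\Psi$ of $\pi^{(2)}|_{\calU(\calS)+\calU(\calS)^*}$ --- using Arveson's extension theorem to see that $\Psi$ is ucp and a compression check to see that it dilates $\pi^{(2)}$ --- and then, after reordering $\calK^2=\calH^2\oplus(\calH^{\perp})^2$, read off that triviality of $\Psi$ forces $a_x=c_x=0$. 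All of these steps check out; the block bookkeeping in the final step is exactly right, since the coupling blocks of $\Psi\left(\left[\begin{smallmatrix} 0 & x \\ 0 & 0\end{smallmatrix}\right]\right)$ between $\calH^2$ and $(\calH^{\perp})^2$ are precisely $a_x$ and $c_x$. The paper instead argues directly with the unique extension property: if $\phi:\calD \to B(\calH)$ is any ucp extension of $\pi|_{\calS}$, then $\phi^{(2)}:M_2(\calD) \to B(\calH^2)$ is a ucp map that agrees with $\pi^{(2)}$ on $\calU(\calS)$, so hyperrigidity of $\calU(\calS)$ in $M_2(\calD)$ forces $\phi^{(2)}=\pi^{(2)}$ and hence $\phi=\pi$. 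That version needs neither Arveson's extension theorem, nor the Dritschel--McCullough equivalence, nor the passage to $\calU(\calS)+\calU(\calS)^*$, and is essentially three lines. What your version buys is an explicit picture of where the corners of the original dilation sit inside the amplified one; what it costs is precisely the delicate shuffle you flag at the end, which the extension-property formulation avoids entirely.
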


\begin{proof}
  Assume without loss of generality that $\calS \subset \calD$. Suppose that $\pi: \calD \to B(\calH)$ is a unital $*$-homomorphism with restriction $\rho=\pi_{|\calS}: \calS \to B(\calH)$ to $\calS$. Let $\phi: \calD \to B(\calH)$ be any ucp extension of $\rho$. Then $\pi^{(2)}:M_2(\calD) \to B(\calH)$ is a unital $*$-homomorphism.  Moreover, if $\eta=(\pi^{(2)})_{|\calU(\calS)}$, then $\phi^{(2)}:M_2(\calD) \to B(\calH)$ is a ucp extension of $\eta$, since $\varphi_{|\calS}=\rho$. By hyperrigidity of $\calU(\calS)$, we have $\phi^{(2)}=\pi^{(2)}$. In particular, $\varphi=\pi$, as desired.
\end{proof}

\begin{corollary}
\label{corollary: hyperrigidity of U(S) in M2(D) tells cstar envelope of S and U(S)}
Let $\calS$ be an operator system with $C^*$-cover $\calD$. If $\calU(\calS)$ is hyperrigid in $M_2(\calD)$, then $C_{\env}^*(\calS)=\calD$ and $C_{\env}^*(\calU(\calS))=M_2(\calD)$.
\end{corollary}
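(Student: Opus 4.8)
The plan is to deduce both identities by chaining together the two results immediately preceding this corollary together with Theorem~\ref{theorem: hyperrigid is envelope}. Since the hypothesis---hyperrigidity of $\calU(\calS)$ in $M_2(\calD)$---is exactly the input required by Proposition~\ref{proposition: hyperrigid passes from Paulsen to system}, there is very little left to do beyond invoking the machinery already in place.

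First I would apply Proposition~\ref{proposition: hyperrigid passes from Paulsen to system}: since $\calU(\calS)$ is hyperrigid in $M_2(\calD)$ by assumption, it follows that $\calS$ is hyperrigid in $\calD$. Because $\calD$ is a unital $C^*$-algebra (being a $C^*$-cover of $\calS$), Theorem~\ref{theorem: hyperrigid is envelope} then applies to the inclusion $\calS \subseteq \calD$ and yields $C_{\env}^*(\calS) = \calD$, which is the first claimed identity.

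For the second identity, I would observe that $M_2(\calD)$ is itself a unital $C^*$-algebra and that $\calU(\calS)$ is hyperrigid in it by hypothesis. Applying Theorem~\ref{theorem: hyperrigid is envelope} once more---this time to the inclusion $\calU(\calS) \subseteq M_2(\calD)$---gives $C_{\env}^*(\calU(\calS)) = M_2(\calD)$. Alternatively, one may combine the first identity with Proposition~\ref{proposition: cover for S gives cover for U(S)}, which asserts $C_{\env}^*(\calU(\calS)) = M_2(C_{\env}^*(\calS))$; substituting $C_{\env}^*(\calS) = \calD$ recovers the same conclusion.

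The main point is that there is no genuine obstacle here: all of the substantive work has already been carried out in Proposition~\ref{proposition: hyperrigid passes from Paulsen to system} and Theorem~\ref{theorem: hyperrigid is envelope}. The only care required is to confirm that the hypotheses align, namely that $\calD$ and $M_2(\calD)$ are unital $C^*$-algebras so that Theorem~\ref{theorem: hyperrigid is envelope} is applicable in each instance---both of which hold by the standing assumption that $C^*$-covers are unital.
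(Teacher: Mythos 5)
Your proof is correct and follows essentially the same route as the paper: Proposition~\ref{proposition: hyperrigid passes from Paulsen to system} gives hyperrigidity of $\calS$ in $\calD$, Theorem~\ref{theorem: hyperrigid is envelope} yields $C_{\env}^*(\calS)=\calD$, and the second identity is obtained from Proposition~\ref{proposition: cover for S gives cover for U(S)}, which is exactly your stated alternative. The only minor caveat is that Theorem~\ref{theorem: hyperrigid is envelope} is stated for operator \emph{systems}, so your first route for the second identity (applying it directly to the non-selfadjoint algebra $\calU(\calS) \subseteq M_2(\calD)$) should formally pass through $\calU(\calS)+\calU(\calS)^*$ via Lemma~\ref{hyperrigidopalgiffaplusastar}; this is easily repaired, and your alternative route avoids the issue entirely.
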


\begin{proof}
By Proposition \ref{proposition: hyperrigid passes from Paulsen to system}, since $\calU(\calS)$ is hyperrigid in $M_2(\calD)$, $\calS$ is hyperrigid in $\calD$.  By Theorem \ref{theorem: hyperrigid is envelope}, $C_{\env}^*(\calS)=\calD$. The fact that $C_{\env}^*(\calU(\calS))=M_2(\calD)$ follows by Proposition \ref{proposition: cover for S gives cover for U(S)}.
\end{proof}

Let $\calS$ be an operator subsystem of a $C^{\ast}$-algebra $\calD$. Suppose that any irreducible representation $\pi: \calD \to B(\calH)$ of $\calD$ is maximal on $S$. In general, it is unknown whether this implies $S$ is hyperrigid in $\calD$. In our case however, this obstruction is not a concern.

\begin{proposition}
\label{proposition: boundary hyperrigid for matrices}
  Let $\calS$ be an operator system with finite-dimensional $C^*$-cover $\calF$. If every irreducible representation of $\calF$ is maximal on $\calS$, then $S$ is hyperrigid in $\calF$.
\end{proposition}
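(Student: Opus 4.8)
The plan is to exploit the very rigid representation theory of finite-dimensional $C^*$-algebras together with the fact, recorded in Lemma~\ref{lemma: uep properties}(2), that a direct sum of maximal representations is maximal. Since $\calF$ is a finite-dimensional $C^*$-algebra, it decomposes as $\calF \cong \bigoplus_{k=1}^m M_{n_k}(\C)$, and its irreducible representations are, up to unitary equivalence, exactly the coordinate projections $\pi_1,\dots,\pi_m$ onto the simple summands $M_{n_k}(\C)$. In particular there are only finitely many of them. The hypothesis is precisely that each $\pi_k|_\calS$ is maximal on $\calS$ (equivalently, has the unique extension property, by the theorem of Dritschel and McCollough), and the goal is to upgrade this from irreducibles to \emph{all} representations of $\calF$.

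Next I would take an arbitrary unital $*$-representation $\pi:\calF \to B(\calH)$ and show that $\pi|_\calS$ has the unique extension property. Using the central projections $z_k = \pi(1_{M_{n_k}})$, the Hilbert space splits as $\calH = \bigoplus_{k=1}^m z_k\calH$, and on each summand $z_k\calH$ the representation $\pi$ is a representation of the simple algebra $M_{n_k}(\C)$, hence a multiple of its unique irreducible representation $\pi_k$. Thus, up to unitary equivalence, $\pi \cong \bigoplus_{k=1}^m \pi_k^{(\oplus m_k)}$ for some cardinals $m_k$, and restricting to $\calS$ yields $\pi|_\calS \cong \bigoplus_{k=1}^m (\pi_k|_\calS)^{(\oplus m_k)}$. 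This is a direct sum, indexed over $k$ and over the multiplicity index sets, of copies of the maps $\pi_k|_\calS$, each of which is maximal by hypothesis.

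Finally, I would apply Lemma~\ref{lemma: uep properties}(2) to this (possibly infinite) family of maximal representations to conclude that $\pi|_\calS$ is itself maximal, and hence has the unique extension property. As $\pi$ was an arbitrary representation of $\calF$, this is exactly the assertion that $\calS$ is hyperrigid in $\calF$.

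The main obstacle is not a deep one but a bookkeeping point: one must verify that Lemma~\ref{lemma: uep properties}(2) is being applied to the correct indexing family, which includes checking that an amplification $\pi_k^{(\oplus m_k)}$ of a single maximal map with arbitrary multiplicity $m_k$ is again maximal on $\calS$---this is the special case of the lemma in which all summands coincide with $\pi_k|_\calS$. Everything else reduces to the standard structure theory of finite-dimensional $C^*$-algebras and so requires no further argument.
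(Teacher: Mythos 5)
Your proof is correct, but it takes a genuinely different route from the paper. The paper's proof is a one-liner: it observes that a finite-dimensional $C^*$-algebra has finite spectrum and then invokes Arveson's theorem that, for a $C^*$-algebra with countable spectrum, an operator system is hyperrigid as soon as every irreducible representation is maximal on it \cite[Theorem 5.1]{arveson11} --- a substantially nontrivial result. You instead unwind the finite-dimensional structure theory by hand: writing $\calF \cong \bigoplus_{k=1}^m M_{n_k}(\C)$, decomposing an arbitrary unital representation $\pi$ via the central projections $\pi(1_{M_{n_k}})$ into multiples of the coordinate irreducibles, and then applying Lemma~\ref{lemma: uep properties}(2) (direct sums of maximal maps are maximal, including infinite multiplicities) to conclude that $\pi|_{\calS}$ is maximal. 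Since maximality of $\pi|_{\calS}$ for every representation $\pi$ of $\calF$ is exactly the paper's definition of hyperrigidity (via Dritschel--McCullough), your argument is complete. What your approach buys is self-containedness and transparency: the only external input is the easy direct-sum lemma the paper already records, rather than Arveson's countable-spectrum theorem (whose separability hypotheses are automatic here but whose proof is far heavier). What the paper's approach buys is brevity. The only bookkeeping points to watch --- that some multiplicities may be zero or infinite, and that maximality is preserved under unitary equivalence --- are handled or harmless as you note.
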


\begin{proof}
Since $\calF$ is finite-dimensional, it has finite spectrum; i.e., there are only finitely many irreducible representations of $\calF$ up to unitary equivalence. The desired result then follows by a theorem of Arveson \cite[Theorem 5.1]{arveson11}.
\end{proof}

\begin{lemma}
\label{hyperrigidopalgiffaplusastar}
  If $\calA$ is a unital operator algebra with $C^{\ast}$-cover $\calD$, and $\calS = \calA + \calA^*$, then $\calA$ is hyperrigid in $\calD$ if and only if $\calS$ is hyperrigid in $\calD$.
\end{lemma}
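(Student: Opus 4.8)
The plan is to reduce the claimed equivalence to a single intrinsic statement about maximal dilations, exploiting the bijective correspondence between unital completely contractive (ucc) maps on $\calA$ and unital completely positive (ucp) maps on $\calS = \calA + \calA^*$ recorded in Remark~\ref{remark: connection to operator algebra, reduced}. Recall that a ucc map $\varphi:\calA \to B(\calH)$ determines a ucp map $\widetilde{\varphi}:\calS \to B(\calH)$ via $\widetilde{\varphi}(a + b^*) = \varphi(a) + \varphi(b)^*$, and that conversely every ucp map $\psi$ on $\calS$ equals $\widetilde{\psi|_\calA}$, since a ucp map is automatically self-adjoint. Because the unique extension property is equivalent to maximality, both statements ``$\calA$ is hyperrigid in $\calD$'' and ``$\calS$ is hyperrigid in $\calD$'' say exactly that, for every $*$-representation $\pi:\calD \to B(\calH)$, the corresponding restriction ($\pi|_\calA$, a ucc map, or $\pi|_\calS$, a ucp map) is maximal. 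Noting that $\pi|_\calS = \widetilde{\pi|_\calA}$, the entire lemma therefore follows from the key assertion that a ucc map $\varphi$ is maximal if and only if $\widetilde{\varphi}$ is maximal.

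To prove this assertion I would set up a correspondence between the dilations of $\varphi$ and those of $\widetilde{\varphi}$. Given a dilation $\rho:\calA \to B(\calK)$ of $\varphi$, so that $\calH \subseteq \calK$ and $P_\calH \rho(\cdot)|_\calH = \varphi$, the associated ucp map $\widetilde{\rho}:\calS \to B(\calK)$ dilates $\widetilde{\varphi}$; this is immediate upon compressing the identity $\widetilde{\rho}(a + b^*) = \rho(a) + \rho(b)^*$ to $\calH$. Conversely, any ucp dilation $\sigma:\calS \to B(\calK)$ of $\widetilde{\varphi}$ restricts to a ucc dilation $\sigma|_\calA$ of $\varphi$, and self-adjointness of $\sigma$ yields $\widetilde{\sigma|_\calA} = \sigma$, so the two assignments are mutually inverse.

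It then remains to verify that triviality is preserved. Writing $\rho(a) = \begin{bmatrix} \varphi(a) & a_a \\ c_a & \varphi'(a) \end{bmatrix}$ relative to $\calK = \calH \oplus \calH^{\perp}$, one computes that the upper-right corner of $\widetilde{\rho}$ equals $a_a$ on the generator $a \in \calA$ and equals $c_a^*$ on $a^* \in \calA^*$, since $\widetilde{\rho}(a^*) = \rho(a)^*$. Thus every upper-right corner of $\widetilde{\rho}$ over $\calS$ vanishes precisely when $a_a = c_a = 0$ for all $a \in \calA$, i.e. precisely when $\rho$ is a trivial dilation of $\varphi$. Consequently $\varphi$ is maximal if and only if $\widetilde{\varphi}$ is maximal, and quantifying over all $*$-representations of $\calD$ finishes the argument.

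I expect the one genuinely careful step to be this triviality bookkeeping: one must confirm that block-diagonality of the self-adjoint map $\widetilde{\rho}$ across all of $\calS$ is equivalent to block-diagonality of the a priori two-sided, non-self-adjoint dilation $\rho$ on $\calA$, which is exactly where self-adjointness of ucp maps is used. Everything else is formal, following from the Arveson correspondence of Remark~\ref{remark: connection to operator algebra, reduced} and the Dritschel--McCollough equivalence of maximality with the unique extension property recalled in the preliminaries.
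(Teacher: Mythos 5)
Your argument is correct, but it takes a genuinely different route from the paper's, which is a two-sentence verification at the level of extensions to $\calD$ rather than dilations. The paper observes that the two hyperrigidity statements quantify over literally the same collection of maps: a ucp extension $\psi:\calD \to B(\calH)$ of $\pi|_{\calS}$ restricts to a unital completely contractive extension of $\pi|_{\calA}$, and conversely a unital completely contractive extension of $\pi|_{\calA}$ to the $C^*$-algebra $\calD$ is automatically ucp, hence self-adjoint, hence satisfies $\psi(a+a^*)=\pi(a)+\pi(a)^*$ and so extends $\pi|_{\calS}$; each unique-extension property then immediately yields the other. You instead descend to the restricted maps themselves, build a triviality-preserving bijection between the dilations of a ucc map $\varphi$ on $\calA$ and those of $\widetilde{\varphi}$ on $\calS$, conclude that $\varphi$ is maximal iff $\widetilde{\varphi}$ is, and translate back to hyperrigidity via the equivalence of maximality with the unique extension property. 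This works, and your intermediate statement (maximality of $\varphi$ iff maximality of $\widetilde{\varphi}$, with no ambient representation in sight) is a slightly stronger and reusable fact; your care with the off-diagonal corners is exactly right, since for a non-self-adjoint dilation the $(1,2)$ and $(2,1)$ corners are independent and self-adjointness of $\widetilde{\rho}$ is what ties $a_a$ to $c_a^*$. The costs are that you need the Dritschel--McCullough equivalence in the operator-algebra setting as well as the operator-system setting recalled in the preliminaries, and that the dilation bookkeeping replaces a step the paper's argument sidesteps entirely by working inside $\calD$, where unital complete contractions are already completely positive. Both proofs ultimately rest on the same input: Arveson's correspondence between unital completely contractive maps on $\calA$ and ucp maps on $\calA+\calA^*$.
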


\begin{proof}
  If $\calA$ is hyperrigid in $\calD$ and $\pi:\calD \to B(\calH)$ is a unital $*$-homomorphism, then any ucp extension $\varphi:\calD \to B(H)$ of $\pi_{|\calS}:\calS \to B(\calH)$ is necessarily a unital, completely contractive extension of $\pi_{|\calA}$, so that $\varphi=\pi$.  Hence, $\calS$ is hyperrigid in $\calD$. Similarly, if $\calS$ is hyperrigid in $\calD$ and $\pi:\calD \to B(\calH)$ is a unital $*$-homomorphism, then any unital completely contractive extension $\psi:\calD \to B(\calH)$ of $\pi_{|\calA}:\calA \to B(\calH)$ is ucp and satisfies $\psi(a+a^*)=\pi(a)+\pi(a)^*$ for all $a \in \calA$.  Thus, $\psi$ is a ucp extension of $\pi_{|\calS}$, so that $\psi=\pi$, which establishes the converse direction.
\end{proof}

For notational convenience, we let $$\mathcal{B}=\left\{ \left[\begin{array}{cc} a & b \\ b & a \end{array}\right]: a,b \in \C\right\}.$$
Note that $\calB \simeq \C^2$ as $C^*$-algebras; however, the given presentation of $\calB$ will be most useful for our purposes. We have the following lemma.

\begin{lemma}
\label{almostpaulsenofm2}
  Let $S \subset \bigoplus_{k=1}^3 M_2(\calB)$ be the operator system defined by
  \begin{align*}
  \calS:=\left\{ \bigoplus_{k=1}^3 \left[\begin{array}{cc|cc}
  a_k & & b_k & c_k \\
  & a_k & c_k & b_k \\ \hline
  d_k & f_k & a_k & \\
  f_k & d_k & & a_k \end{array}\right]: a_k=a_{\ell}, \, b_k=b_{\ell}, \, d_k=d_{\ell}, \, \text{for all } 1 \leq k,\ell \leq 3 \right\}
  \end{align*}
  Then $\calS$ is hyperrigid in its $C^{\ast}$-cover $M_2(\calB) \oplus M_2(\calB) \oplus M_2(\calB)$.
\end{lemma}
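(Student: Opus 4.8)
The plan is to apply Proposition~\ref{proposition: boundary hyperrigid for matrices}, whose hypotheses are tailor-made for this situation: the cover $\calF := M_2(\calB) \oplus M_2(\calB) \oplus M_2(\calB)$ is finite-dimensional, and one checks (a routine computation, separating the three summands via the block-$k$ projections recovered as products $u^*u$ of elements of $\calS$, and recovering each copy of $\calB$) that $\calS$ generates $\calF$ as a $C^*$-algebra. The entire task then reduces to showing that \emph{every} irreducible representation of $\calF$ is maximal on $\calS$.

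First I would list the irreducible representations. Since $\calB\cong\C^2$ through its two characters $\chi_\pm\!\left(\begin{smallmatrix} a & b \\ b & a\end{smallmatrix}\right)=a\pm b$, each summand satisfies $M_2(\calB)\cong M_2\oplus M_2$, so the irreducible representations of $\calF$ are the six maps $\pi_{k,\epsilon}$ (with $k\in\{1,2,3\}$, $\epsilon\in\{+,-\}$) that project onto the $k$-th summand and then apply $\chi_\epsilon$ to each $\calB$-entry, landing in $M_2$. A direct computation on a general $s\in\calS$ with data $(a,b,d,c_k,f_k)$ gives
$$\pi_{k,\epsilon}(s)=\begin{pmatrix} a & b+\epsilon c_k \\ d+\epsilon f_k & a\end{pmatrix}.$$

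To prove maximality of a fixed $\pi:=\pi_{k,\epsilon}$, I would take an arbitrary dilation $\psi:\calS\to B(\C^2\oplus\calH)$, write $\psi(s)=\left(\begin{smallmatrix}\pi(s) & V(s) \\ W(s) & Y(s)\end{smallmatrix}\right)$, and aim to show the linear corner map $V$ vanishes; then $W(s)=V(s^*)^*=0$ as well, so the dilation is trivial. The key device is the one already used in Lemma~\ref{containsenoughunitariesimplieshyperrigid}: if $u\in\calS$ is a unitary of $\calF$ for which $\pi(u)$ is unitary in $M_2$, then $\psi(u)$ is a contraction (as $\psi$ is completely contractive and $\|u\|_\calF=1$) with a unitary $(1,1)$-corner, which forces $V(u)=0$. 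Since $V$ is linear, it therefore suffices to exhibit enough such unitaries to span $\calS$.

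These unitaries are easy to write down and, conveniently, serve all six $\pi_{k,\epsilon}$ at once: taking $a=b=d=0$ with $|c_k|=|f_k|=1$ yields block-antidiagonal unitaries of $\calF$ whose images $\left(\begin{smallmatrix} 0 & \epsilon c_k \\ \epsilon f_k & 0\end{smallmatrix}\right)$ are unitary; taking $a=c_k=f_k=0$ with $|b|=|d|=1$ yields unitaries with image $\left(\begin{smallmatrix} 0 & b \\ d & 0\end{smallmatrix}\right)$; and the single element $(a,b,d)=\tfrac{1}{\sqrt2}(1,1,-1)$, $c_k=f_k=0$, is a unitary of $\calF$ whose image $\tfrac{1}{\sqrt2}\left(\begin{smallmatrix} 1 & 1 \\ -1 & 1\end{smallmatrix}\right)$ is unitary and supplies the missing diagonal direction. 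The first family spans $\{a=b=d=0\}$, the second adds the $b,d$ directions, and the last element adds $a$, so together they span $\calS$. Hence $V\equiv0$, each $\pi_{k,\epsilon}$ is maximal on $\calS$, and Proposition~\ref{proposition: boundary hyperrigid for matrices} gives that $\calS$ is hyperrigid in $\calF$. The only real obstacle is the bookkeeping in this last paragraph—constructing a linearly spanning family of unitaries of $\calF$ lying in $\calS$ and having unitary images under all the irreducible representations—after which the argument is immediate.
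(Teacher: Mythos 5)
Your proof is correct, and the crucial step is carried out by a genuinely different mechanism than the paper's. Both arguments share the same skeleton — identify the six irreducible representations $\pi_{k,\epsilon}$ of the finite-dimensional cover $\calF=\bigoplus_{k=1}^3 M_2(\calB)$, prove each is maximal on $\calS$, and invoke Proposition~\ref{proposition: boundary hyperrigid for matrices} — but the paper proves maximality by taking a ucp extension of $\pi|_{\calS}$ to all of $\calF$, passing to a minimal Stinespring representation, and running a multiplicative-domain computation on the partial isometries $X=E_{14}+E_{23}$ and $Y=E_{13}+E_{24}$ via the identity $X^*X+XX^*=\sum_i E_{ii}$. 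You instead kill the corners of an arbitrary dilation directly, using that a contraction with a unitary $(1,1)$-corner has vanishing off-diagonal corners, applied to a family of unitaries of $\calF$ lying in $\calS$; your three families do span the $9$-dimensional space $\calS$ (the first spans the six $c_k,f_k$ directions, the second the $b,d$ directions, and $\tfrac{1}{\sqrt2}(I\oplus I, I\oplus I, -I\oplus I)$-type element supplies $a$), and each is genuinely unitary in $\calF$ with unitary image under every $\pi_{k,\epsilon}$, so the argument closes. In fact your observation proves more than you use: since these unitaries span $\calS$ and $C^*(\calS)=\calF$, they generate $\calF$ as a $C^*$-algebra, i.e.\ $\calS$ \emph{contains enough unitaries} in $\calF$ in Kavruk's sense, and the paper's own Lemma~\ref{containsenoughunitariesimplieshyperrigid} then yields hyperrigidity immediately — no classification of irreducible representations and no appeal to Proposition~\ref{proposition: boundary hyperrigid for matrices} (whose proof rests on Arveson's nontrivial countable-spectrum theorem) is needed. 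So your detour through the $\pi_{k,\epsilon}$ is sound but superfluous given your own key construction, and the spanning-unitaries idea gives a shorter proof than the one in the paper.
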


\begin{proof}
Let $\pi:\bigoplus_{k=1}^3 M_2(\calB)=(\bigoplus_{k=1}^3 M_2) \otimes \calB \to B(H)$ be an irreducible representation.  Then up to unitary equivalence, we may assume that $H=K \otimes L$ and $\pi=\rho \otimes \sigma$, where $\rho:M_2 \oplus M_2 \oplus M_2 \to B(K)$ and $\sigma:\calB \to B(L)$ are $*$-homomorphisms. Since $\rho \otimes \sigma$ is irreducible and $\sigma(\calB)$ is abelian, we must have $\dim(\sigma(\calB))=1$ and $L=\C$. Therefore, we may identify $H=K$ and $\sigma \left(\begin{bmatrix} 0 & 1 \\ 1 & 0 \end{bmatrix} \right)=\omega I_H$, where $\omega \in \{1,-1\}$.  Since $\pi$ is irreducible, it must be surjective.  Hence, $\rho$ is surjective.  Thus, $H$ is finite-dimensional and we may write $B(H)=M_D$ for some $D \in \mathbb{N}$. Let $\rho_k:M_2 \to M_D$ be the restriction of $\rho$ to the $k$-th summand of $M_2$ in $M_2 \oplus M_2 \oplus M_2$. Each $\rho_k$ is a $*$-homomorphism, so since $M_2$ is simple, $\rho_k$ is either injective or the zero map.  If at two of the $\rho_k$'s were injective (say, $\rho_1$ and $\rho_2$) and $T_1 \oplus T_2 \in M_2 \oplus M_2$ were such that $\rho(T_1 \oplus T_2 \oplus 0)=0$, then we would have
$$0=\rho(T_1 \oplus T_2 \oplus 0)^*\rho(T_1 \oplus T_2 \oplus 0)=\rho_1(T_1^*T_1)+\rho_2(T_2^*T_2).$$
By injectivity of each $\rho_k$, we must have $T_1=T_2=0$, so that $\rho_{|M_2 \oplus M_2 \oplus 0}$ is injective.  But then this restriction would be an isomorphism of $M_2 \oplus M_2$ onto $M_D$, with the latter being simple, which is a contradiction.  Similarly, it is impossible to have all three $\rho_k$'s injective. Moreover, since $I_D=\rho(I_2 \oplus I_2 \oplus I_2)=\sum_{k=1}^3 \rho_k(I_2)$, we must have that exactly one $\rho_k$ is non-zero. Therefore, we may assume without loss of generality that $\rho_1$ is non-zero (and hence an isomorphism), while $\rho_2=\rho_3=0$. In particular, we may assume that $D=2$.

Up to unitary conjugation, we may assume that $\rho_1=\id_{M_2}$. Let $\varphi$ be the restriction of $\pi$ to $S$, and let $\psi:\bigoplus_{k=1}^3 M_2(\calB) \to M_2$ be any ucp extension of $\varphi$.  Let $\psi=V^* \Pi(\cdot)V$ be a minimal Stinespring representation of $\psi$ on some Hilbert space $H_{\Pi}$.  We consider $\bigoplus_{k=1}^3 M_2(\calB)$ as a subalgebra of $\bigoplus_{k=1}^3 M_4$.

Let $X=E_{14}+E_{23} \in S$. We note that $\psi(X)=\pi(X)=\omega E_{12} \in M_2$. With respect to the decomposition $H_{\Pi}=\ran(V) \oplus \ran(V)^{\perp}$, for $i,j,k,\ell \in \Lambda$, we have
$$\Pi(X)=\begin{bmatrix} \varphi(X) & A \\ B & C \end{bmatrix}=\begin{bmatrix} \omega E_{12} & A \\ B & C \end{bmatrix},$$
for some operators $A,B,C$. Noting that $X^*X+XX^*=\sum_{i=1}^4 E_{ii}$, we have
$$\Pi \left(\sum_{i=1}^4 E_{ii} \right)=\Pi(X^*X+XX^*)=\Pi(X)^*\Pi(X)+\Pi(X)\Pi(X)^*=\begin{bmatrix} I+B^*B+AA^* & * \\ * & * \end{bmatrix}.$$
Thus, $\varphi\left(\sum_{i=1}^4 E_{ii}\right)=I+B^*B+AA^* \leq \varphi(I_{12})=I_2$. This forces $B^*B+AA^*=0$, so that $A=0$ and $B=0$. Considering the $(1,1)$-block, it follows that $\psi(X^*X)=E_{22}=\psi(X)^*\psi(X)$ and $\psi(XX^*)=E_{11}=\psi(X)\psi(X)^*$. Thus, $X$ lies in the multiplicative domain $\calM_{\psi}$ of $\psi$. Moreover, $\psi(E_{ii})=0$ for all $5 \leq i \leq 12$. It readily follows that $\psi_{|0 \oplus M_2(\calB) \oplus M_2(\calB)}=0$. Replacing $X$ with $Y=E_{13}+E_{24}$, it is easy to see that $Y \in \mathcal{M}_{\psi}$ as well, while $\psi(Y)=E_{12}$. Let $W=E_{12}+E_{21}$ and $Z=E_{34}+E_{43}$. Then the first copy of $M_2(\calB)$ is generated as a $C^*$-algebra by the four elements $X,Y,Z,W$. Since we have $W^*W=E_{11}+E_{22}$ and $Z^*Z=E_{33}+E_{44}$, we need only show that $\psi(W)=\pi(Z)$ and $\psi(Z)=\pi(Z)$.  If this assertion holds, then $W$ and $Z$ would lie in $\mathcal{M}_{\psi}$, from which it would follow that $M_2(\calB) \oplus 0 \oplus 0 \subseteq \mathcal{M}_{\psi}$ and $\pi=\psi$.
Since $W=XY^*$, we may write
$$\psi(W)=\psi(XY^*)=\psi(X)\psi(Y)^*=\omega E_{12}E_{21}=E_{11}.$$
Similarly, since $Z=X^*Y$, we have
$$\psi(Z)=\psi(X^*Y)=\psi(X)^*\psi(Y)=\omega E_{21}E_{12}=\omega E_{22}.$$
It readily follows that $\psi(Z)=\pi(Z)$ and $\pi(W)=\psi(W)$, while $Z,W \in \mathcal{M}_{\psi}$.  Therefore, $\psi=\pi$. Applying Proposition \ref{proposition: hyperrigid passes from Paulsen to system} completes the proof.
\end{proof}

\begin{lemma}\label{lemma: Paulsen of Paulsen}
  The operator sub-algebra of $\bigoplus_{k=1}^3 M_2(\calB)$ given by
\begin{align*}
\calA:= \left\{ \bigoplus_{k=1}^3 \left[\begin{array}{cc|cc}
a_k & & b_k & c_k \\
& a_k & c_k & b_k \\ \hline
& & a_k & \\
& & & a_k \end{array}\right]: a_k=a_{\ell}, \, b_k=b_{\ell}, \, \forall 1 \leq k,\ell \leq 3\right\}
\end{align*}
  is hyperrigid in $M_2(\calB) \oplus M_2(\calB) \oplus M_2(\calB)$.
\end{lemma}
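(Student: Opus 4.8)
The plan is to recognize $\calA$ as the upper-triangular part of the operator system $\calS$ of Lemma~\ref{almostpaulsenofm2}, and then transfer hyperrigidity from $\calS$ to $\calA$ via Lemma~\ref{hyperrigidopalgiffaplusastar}. First I would confirm that $\calA$ really is a unital operator algebra: working in each summand $k$, the product of two elements of $\calA$ has scalar entries $aa'$ on both diagonal $2 \times 2$ blocks, upper-right block $\begin{bmatrix} ab'+a'b & ac_k'+a'c_k \\ ac_k'+a'c_k & ab'+a'b \end{bmatrix}$ (so the new ``$b$''-entry $ab'+a'b$ is again constant in $k$, while the ``$c_k$''-entry may vary), and vanishing lower-left block. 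Hence the product lies in $\calA$, and the unit $\bigoplus_k I_4$ is recovered by taking $a=1$ with all other parameters zero.

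The central observation is that $\calA + \calA^* = \calS$, the operator system of Lemma~\ref{almostpaulsenofm2}. Indeed, adjoining $\calA^*$ fills the lower-left block with parameters $\begin{bmatrix} d_k & f_k \\ f_k & d_k \end{bmatrix}$, where $d_k$ (arising from $\cl{b}$) is constant in $k$ and $f_k$ (arising from $\cl{c_k}$) varies, and it promotes the diagonal entry $a_k = a + \cl{a'}$ to an arbitrary complex scalar that is still constant in $k$; this is exactly the defining shape of $\calS$. Consequently $C^*(\calA) \supseteq C^*(\calA + \calA^*) = C^*(\calS) = \bigoplus_{k=1}^3 M_2(\calB)$, and since $\calA \subseteq \bigoplus_{k=1}^3 M_2(\calB)$ we get $C^*(\calA) = \bigoplus_{k=1}^3 M_2(\calB)$. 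Thus $\bigoplus_{k=1}^3 M_2(\calB)$ is a $C^*$-cover of $\calA$, which is the hypothesis needed to invoke Lemma~\ref{hyperrigidopalgiffaplusastar}.

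With these identifications in place the conclusion is immediate: Lemma~\ref{almostpaulsenofm2} asserts that $\calS = \calA + \calA^*$ is hyperrigid in $\bigoplus_{k=1}^3 M_2(\calB)$, and Lemma~\ref{hyperrigidopalgiffaplusastar}, applied with $\calD = \bigoplus_{k=1}^3 M_2(\calB)$, then gives that $\calA$ itself is hyperrigid in the same cover. I expect the only genuinely computational step to be the block-by-block matching of $\calA + \calA^*$ against the defining form of $\calS$ — in particular verifying that the constraints of $\calA$ ($a_k$ and $b_k$ constant in $k$, $c_k$ free) produce precisely the constraints of $\calS$ after adding adjoints. This bookkeeping is the ``hard part'' only in the sense of being the sole place where care is required; the substantive representation-theoretic analysis has already been absorbed into the proof of Lemma~\ref{almostpaulsenofm2}.
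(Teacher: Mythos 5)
Your proposal is correct and follows exactly the paper's argument: the paper's proof likewise consists of observing that $\calA+\calA^*$ is precisely the operator system $\calS$ of Lemma~\ref{almostpaulsenofm2} inside the $C^*$-cover $\bigoplus_{k=1}^3 M_2(\calB)$ and then invoking Lemma~\ref{hyperrigidopalgiffaplusastar}. The only difference is that you spell out the block computations (closure of $\calA$ under multiplication, the matching of $\calA+\calA^*$ with the defining constraints of $\calS$) that the paper leaves implicit, and these checks are all accurate.
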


\begin{proof}
The operator system $\calS$ in Lemma \ref{almostpaulsenofm2} is precisely $\calA+\calA^*$ in the $C^*$-cover $\bigoplus_{k=1}^3 M_2(\calB)$.  The desired result follows by Lemma \ref{hyperrigidopalgiffaplusastar}.
\end{proof}

We can now obtain hyperrigidity of the nuclearity detectors $\calW_{3,2}$ and $\calU(\calW_{3,2})$.
\begin{theorem}
\label{Theorem: hyperrigidity of counterexample}
  The operator algebra $\calU(\calW_{3,2})$ is hyperrigid in $C_{\env}^*(\calU(\calW_{3,2}))=M_2\left( \bigoplus_{k=1}^3\calB \right)$, and $\calW_{3,2}$ is hyperrigid in $C_{\env}^*(\calW_{3,2})=\bigoplus_{k=1}^3 \calB$.
\end{theorem}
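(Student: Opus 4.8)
The plan is to recognize the operator algebra $\calU(\calW_{3,2})$ as precisely the algebra $\calA$ appearing in Lemma~\ref{lemma: Paulsen of Paulsen}, and then to read both assertions off from the results already established in this section. First I would record that $\bigoplus_{k=1}^3 \calB$ is a finite-dimensional $C^*$-cover of $\calW_{3,2}$. Indeed, each diagonal $2\times 2$ block of an element of $\calW_{3,2}$ has the form $\left[\begin{smallmatrix} a & x_k \\ x_k & a\end{smallmatrix}\right]$, which lies in $\calB$, so $\calW_{3,2} \subseteq \bigoplus_{k=1}^3 \calB \subseteq M_6(\C)$ with the inherited operator system structure; moreover the off-diagonal flip $\left[\begin{smallmatrix} 0 & 1 \\ 1 & 0\end{smallmatrix}\right]$ in the $k$-th summand, together with the unit, generates the $k$-th copy of $\calB$, so $\calW_{3,2}$ generates all of $\bigoplus_{k=1}^3 \calB$ as a $C^*$-algebra.

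The heart of the argument is an identification of operator algebras. Writing each $4\times 4$ summand of a generic element of $\calA$ as a $2\times 2$ matrix over $\calB$, the $k$-th block becomes $\left[\begin{smallmatrix} aI_2 & B_k \\ 0 & aI_2\end{smallmatrix}\right]$ with $B_k = \left[\begin{smallmatrix} b & c_k \\ c_k & b\end{smallmatrix}\right] \in \calB$, where $a$ and $b$ are common across $k$ while $c_1,c_2,c_3$ are free. Under the canonical shuffle $\bigoplus_{k=1}^3 M_2(\calB) \simeq M_2\left(\bigoplus_{k=1}^3 \calB\right)$, this is exactly the image, under the completely isometric homomorphism of Proposition~\ref{proposition: cover for S gives cover for U(S)}, of the element $\left[\begin{smallmatrix} \lambda & x \\ 0 & \lambda\end{smallmatrix}\right]$ of $\calU(X)$ with $\lambda = a$ and $x = \bigoplus_k B_k$ ranging over the operator subspace $X = \{\bigoplus_{k=1}^3 \left[\begin{smallmatrix} b & c_k \\ c_k & b\end{smallmatrix}\right]\} \subseteq \bigoplus_{k=1}^3 \calB$. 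But $X$ is literally $\calW_{3,2}$ — both are the span of the common unit and the three orthogonal flips — so, since $\calU(\cdot)$ is independent of the chosen representation of its argument, I conclude $\calA = \calU(\calW_{3,2})$ inside $M_2\left(\bigoplus_{k=1}^3 \calB\right)$.

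With this identification in hand the conclusions are immediate. Lemma~\ref{lemma: Paulsen of Paulsen} now reads: $\calU(\calW_{3,2})$ is hyperrigid in $\bigoplus_{k=1}^3 M_2(\calB) = M_2\left(\bigoplus_{k=1}^3 \calB\right)$. Applying Corollary~\ref{corollary: hyperrigidity of U(S) in M2(D) tells cstar envelope of S and U(S)} with $\calS = \calW_{3,2}$ and $\calD = \bigoplus_{k=1}^3 \calB$ yields $C_{\env}^*(\calW_{3,2}) = \bigoplus_{k=1}^3 \calB$ and $C_{\env}^*(\calU(\calW_{3,2})) = M_2\left(\bigoplus_{k=1}^3 \calB\right)$, so $\calU(\calW_{3,2})$ is hyperrigid in its $C^*$-envelope. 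Finally, Proposition~\ref{proposition: hyperrigid passes from Paulsen to system} (already invoked inside the proof of that corollary) shows that $\calW_{3,2}$ is itself hyperrigid in $\bigoplus_{k=1}^3 \calB = C_{\env}^*(\calW_{3,2})$, which completes both assertions.

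The only genuine work lies in the identification step of the second paragraph: one must confirm that the block rearrangement is precisely the canonical shuffle identifying $\bigoplus_{k=1}^3 M_2(\calB)$ with $M_2\left(\bigoplus_{k=1}^3 \calB\right)$, and that the upper-right slot $X$ of $\calA$ coincides, as a concretely represented operator space, with $\calW_{3,2}$. I expect no real difficulty here, since both spaces are spanned by a common unit together with three orthogonal off-diagonal flips, but this is the step requiring care so that the representation-independence of $\calU(\cdot)$ can legitimately be invoked; everything after it is a direct citation of the preceding results.
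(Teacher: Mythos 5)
Your proposal is correct and follows essentially the same route as the paper: the paper likewise identifies $\calU(\calW_{3,2})$ with the algebra $\calA$ of Lemma~\ref{lemma: Paulsen of Paulsen} via the canonical shuffle $M_2\left(\bigoplus_{k=1}^3 \calB\right) \simeq \bigoplus_{k=1}^3 M_2(\calB)$, and then cites Lemma~\ref{lemma: Paulsen of Paulsen}, Proposition~\ref{proposition: hyperrigid passes from Paulsen to system}, and Corollary~\ref{corollary: hyperrigidity of U(S) in M2(D) tells cstar envelope of S and U(S)} exactly as you do. Your extra care in writing out the block structure and checking that the upper-right corner of $\calA$ is concretely $\calW_{3,2}$ is precisely the content of the paper's shuffle step, so the two arguments coincide.
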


\begin{proof}
Since $\calW_{3,2}$ is the set of all matrices in $\calB \oplus \calB \oplus \calB$ of the form $$\bigoplus_{k=1}^3 \begin{bmatrix} a_k & b_k \\ b_k & a_k \end{bmatrix},$$
where $a_k=a_{\ell}$ for all $1 \leq k,\ell \leq 3$, it follows that $C^*(\calW_{3,2}) \subseteq \oplus_{k=1}^3 \calB$.  On the other hand, for each $k$, the element $V_k$ defined by $\begin{bmatrix} 0 & 1 \\ 1 & 0 \end{bmatrix}$ in the $k$-th summand and $0$ in the other summands is an element of $\calW_{3,2}$, and $V_k^2=V_k^*V_k$ is $I_2$ in the $k$-th summand and $0$ otherwise.  It follows that $C^*(\calW_{3,2})=\oplus_{k=1}^3 \calB$. Using Proposition \ref{proposition: cover for S gives cover for U(S)}, $M_2(\oplus_{k=1}^3 \calB)$ is a $C^*$-cover of $\calU(\calW_{3,2})$.

To establish hyperrigidity of $\calU(\calW_{3,2})$ in $M_2 \left( \bigoplus_{k=1}^3 \calB \right)$, let $\pi$ be the restriction of the canonical shuffle $M_2(M_6) \simeq M_6(M_2)$ to $M_2 \left( \bigoplus_{k=1}^3 \calB \right)$. Then $\pi$ is a $*$-isomorphism from $M_2 \left(\bigoplus_{k=1}^3 \calB \right)$ onto $\oplus_{k=1}^3 M_2(\calB)$ that sends $\calU(\calW_{3,2})$ onto the operator algebra $\calA$ in Lemma \ref{lemma: Paulsen of Paulsen}. Applying Lemma \ref{lemma: uep properties}, since $\calA$ is hyperrigid in $\oplus_{k=1}^3 M_2(\calB)$, $\calU(\calW_{3,2})$ is hyperrigid in $M_2 \left(\bigoplus_{k=1}^3 \calB \right)$.  The analogous claim for $\calW_{3,2}$ follows by Proposition \ref{proposition: hyperrigid passes from Paulsen to system}. The claim about $C^*$-envelopes immediately follows by Corollary \ref{corollary: hyperrigidity of U(S) in M2(D) tells cstar envelope of S and U(S)}.
\end{proof}

\begin{remark}
  If an operator algebra $\mathcal{A}$ is hyperrigid in $C_{env}^*(\mathcal{A})$, then a theorem of Katsoulis \cite[Theorem 2.7]{katsoulis17} shows that $C^*_{env}(\calA \rtimes_{C^*_{env}(\calA),\alpha} G) = C^*_{env}(\calA) \rtimes_{\alpha} G$. Corollary \ref{KRproblem1} shows that the analogous question for the full crossed product can fail even if the operator algebra $\calA$ is hyperrigid in its $C^*$-envelope. What about the case when $\calA$ is not hyperrigid in $C^*_{env}(\calA)$? What about the operator system case? Is the hyperrigidity of an operator system $\calS$ equivalent to the hyperrigidity of $\calU(\calS)$?
\end{remark}

\section*{Acknowledgements}

We would like to thank Ken Davidson, Matt Kennedy, and Vern Paulsen for their careful reading of this paper and their comments. We thank the referee for their helpful suggestions, which have improved the readability of the paper. We would also like to thank Chris Schafhauser for valuable conversations. The second author was supported by an NSERC Postgraduate Scholarship.

\end{document}